\@date \else {\vskip1ex \centering\footnotesize\@date\par\vskip1ex}\fi
\else \@footnotetext{\@setdate}\fi}
\theoremstyle{plain}
\newtheorem{thm}{Theorem}[section]
\newaliascnt{prop}{thm}
    \newtheorem{prop}[prop]{Proposition}
\newaliascnt{lem}{thm}
    \newtheorem{lem}[lem]{Lemma}
\newaliascnt{cor}{thm}
    \newtheorem{cor}[cor]{Corollary}
\newaliascnt{conj}{thm}
\theoremstyle{definition}
\newaliascnt{exm}{exmp}
    \newtheorem{exm}[exm]{Example}
\newaliascnt{dfn}{thm}
    \newtheorem{dfn}[dfn]{Definition}
\newaliascnt{rem}{thm}
    \newtheorem{rem}[rem]{Remark}
\numberwithin{equation}{section}
\newcommand{\R}{\mathbb{R}}
\newcommand{\C}{\mathbb{C}}
\newcommand{\Z}{\mathbb{Z}}
\newcommand{\M}{\mathcal{M}}
\newcommand{\g}{\mathfrak{g}}
\newcommand{\s}{\mathfrak{s}}
\newcommand{\su}{\mathfrak{su}}
\newcommand{\G}{\mathscr{G}}
\newcommand{\A}{\mathscr{A}}
\newcommand{\B}{\mathscr{B}}
\newcommand\omicron{o}
\DeclareMathOperator{\cok}{coker}
\DeclareMathOperator{\im}{im}
\DeclareMathOperator{\ch}{ch}
\DeclareMathOperator{\orb}{orb}
\DeclareMathOperator{\Rea}{Re}
\DeclareMathOperator{\Ima}{Im}
\DeclareMathOperator{\Hom}{Hom}
\DeclareMathOperator{\Map}{Map}
\DeclareMathOperator{\id}{id}
\DeclareMathOperator{\Sym}{Sym}
\DeclareMathOperator{\ind}{ind}
\DeclareMathOperator{\LC}{LC}
\DeclareMathOperator{\Ind}{Ind}
\DeclareMathOperator{\Aut}{Aut}
\DeclareMathOperator{\tr}{tr}
\DeclareMathOperator{\Ad}{Ad}
\DeclareMathOperator{\ad}{ad}
\DeclareMathOperator{\Sf}{Sf}
\DeclareMathOperator{\Hess}{Hess}
\DeclareMathOperator{\Hol}{Hol}
\DeclareMathOperator{\PD}{PD}
\DeclareMathOperator{\End}{End}
\DeclareMathOperator{\grad}{grad}
\DeclareMathOperator{\rk}{rank}
\DeclareMathOperator{\vol}{vol}
\DeclareMathOperator{\cs}{cs}
\DeclareMathOperator{\pr}{pr}
\DeclareMathOperator{\dR}{dR}
\DeclareMathOperator{\Stab}{Stab}
\mathchardef\mhyphen="2D
\NewDocumentCommand{\evalat}{sO{\big}mm}{%
  \IfBooleanTF{#1}
   {\mleft. #3 \mright|_{#4}}
   {#3#2|_{#4}}%
}
\newcolumntype{M}[1]{>{\centering\arraybackslash}m{#1}}
\begin{document}

\title[Flat Connections over $G2$-Orbifolds]{On Counting Flat Connections over $G_2$-Orbifolds}
\author{Langte Ma}
\date{\small \today}
\address{Simons Center for Geometry and Physics, 100 Nicolls Road, Stony Brook, NY 11790}
\email{lma@scgp.stonybrook.edu}

\begin{abstract}
We study the moduli space of $G_2$-instantons on (projectively) flat bundles over torsion-free $G_2$-orbifolds. We prove that the moduli space is compact and smooth at the irreducible locus after adding small and generic holonomy perturbations. Consequently, we define an integer-valued invariant that is invariant under $C^0$-deformation of torsion-free $G_2$-structures. We compute this invariant for some orbifolds that arise in Joyce's construction of compact $G_2$-manifold. 
\end{abstract}

\maketitle

\section{\large \bf Introduction}

\subsection{\em Background}\label{ss1.1} \hfill
 
\vspace{3mm}

The study of gauge theory over manifolds with special holonomy was initiated by Donaldons--Thomas \cite{DT98} as an extension of the gauge theory techniques in dimension $3$ and $4$. In dimension $7$, the holonomy group over Riemannian manifolds that enters the picture is $G_2$, i.e. the automorphism group of octonions as a subgroup of $SO(7)$. For such manifolds, Donaldson--Segal \cite{DS11} proposed a program of constructing invariants by relating the moduli space of instantons and calibrated submanifolds, which was further elaborated by Doan--Walpuski \cite{DW19}. Ideally, one expects to have a thorough understanding of the moduli space of $G_2$-instantons, then extracts an enumerative invariant combining counting irreducible points and tracing through the potential failure of compactness. Invariants of such kind should be useful in the study of deformation properties of $G_2$-structures. 

Although a general understanding of the moduli space of $G_2$-instantons is still elusive, progress has been made through the study of explicit examples. The class of examples that motivate this paper comes from Joyce's construction \cite{J96}. Roughly speaking, these compact $G_2$-manifolds are obtained from flat $G_2$-orbifolds by resolving the singularties using families of hyperkähler ALE spaces. In \cite{W13}, Walpuski constructed examples of non-trivial $G_2$-instantons on Joyce's $G_2$-manifolds by developing the gluing technique that grafts families of finite energy ASD instantons on ALE spaces to flat connections on $G_2$-orbifolds. Therefore analyzing instantons over the building blocks would lead to a complete picture of the moduli space of $G_2$-instantons over $G_2$-manifolds from Joyce's list. 

In this paper, we carry out the first step that studies the moduli space of flat connections over $G_2$-orbifolds. Since the deformation theory of flat connections in dimension $7$ is not Fredholm, we choose to consider $G_2$-instantons over flat bundles. Heuristically, one can think of the $G_2$-structure as a deformation to the flatness equation of connections. To get a regular moduli space, one needs to further perturb the $G_2$-instanton equation by holonomy perturbations. After establishing the compactness of the moduli space, one can then extract an enumerative invariant by an appropriate counting procedure. The second step that studies $G_2$-instantons over a family of ALE spaces will be carried out in a future paper by the author in collaboration with Galt \cite{GM23}. This approach for analyzing moduli spaces of instantons also appeared in Kronheimer's work \cite{K91} of counting instantons over $K3$ surfaces. Our work is partially inspired by his method. 

\subsection{\em Main Results}\label{ss1.2} \hfill
 
\vspace{3mm}

The notion `orbifold' in this paper primarily means effective orbifold in the classical sense used by Satake \cite{S56} and Thurston \cite{T97}. When a general perspective of `orbifold' shows up, we will address it explicitly. We also recall basics of effective orbifold in \autoref{s2}. 

We write $(X, \phi)$ for a typical compact smooth $G_2$-orbifold which means the holonomy group of the Levi--Civita connection on the Riemannian orbifold $X$ is contained in $G_2 \subset SO(7)$. As explained below in \autoref{s2}, the information of such a structure is encoded in a $3$-form $\phi \in \Omega^3(X; \R)$ satisfying 
\begin{equation}\label{e1.1}
d\phi = 0 \qquad d \star_{\phi} \phi = 0,
\end{equation}
where $\star_{\phi}$ means the Hodge star defined with respect to the metric induced from $\phi$. One further requires that at each point $x \in X$ there is an orbifold chart $\tilde{U}_x \to U_x$ centered at $x \in U_x \subset X$ so that $\phi|_x$, when lifted to $\tilde{U}_x$, is identified with the standard $3$-form $\phi_0$ on $\R^7$:
\begin{equation}\label{e1.2}
\phi_0 := dx_{123} + dx_{145} +dx_{167} + dx_{246} - dx_{257} - dx_{347} - dx_{356},
\end{equation} 
where $dx_{ijk} := dx_i \wedge dx_j \wedge dx_k$. The condition \eqref{e1.1} is usually referred to as the torsion-free property of the $G_2$-structure. 

We write $P^{\dagger}$ for a principal $U(r)$-orbibundle over $(X, \phi)$ and $P$ the adjoint bundle of $P^{\dagger}$ with structure group $PU(r)$. Ignoring the regularity issue, we write $\A$ for the space of $C^{\infty}$-connections on the adjoint bundle $P$ and $\G$ the gauge group consisting of $C^{\infty}$-automorphisms of $P^{\dagger}$ that fixes the determinant line $\det P^{\dagger}$. Alternatively, one can think of $\A$ as the space of unitary connections on $P^{\dagger}$ that induce a fixed connection on $\det P^{\dagger}$, and $\G$ consists of automorphisms that preserve this space of connections. For a unitary connection $A^{\dagger}$ on $P^{\dagger}$, we write $A$ for the induced connection on the adjoint bundle $P$. 

\begin{dfn}
A unitary connection $A^{\dagger} \in \A$ is called a projective $G_2$-instanton with respect to $\phi$ if 
\[
F_A \wedge \star_{\phi} \phi = 0,
\]
where $A$ is the induced connection of $A^{\dagger}$ on the adjoint bundle $P$. 
\end{dfn}

In \autoref{s4}, we shall introduce a Banach space $\mathscr{P}$ of holonomy perturbations. Each element $\pmb{\pi} \in \mathscr{P}$ gives rise to a $\G$-invariant functional $\sigma_{\pmb{\pi}}: \A \to \Omega^6(X, \g_P)$ that can be used to perturb the $G_2$-instanton equation. Here $\g_P$ denotes the Lie algebra bundle associated to $P$ by the adjoint action. We write 
\[
\M_{\phi, \pmb{\pi}}(X, P^{\dagger}):=\{ A \in \A: F_A \wedge \star_{\phi} \phi + \sigma_{\pmb{\pi}}(A) = 0 \}/\G
\]
for the moduli space of $\pmb{\pi}$-perturbed projective $G_2$-instantons on $P^{\dagger}$.  Let $Z(r) \subset U(r)$ be the center of $U(r)$, which is a cylic group of order $r$. We say a connection $A \in \A$ is irreducible if its isotropy group $\Stab(A) \subset \G$ is $Z(r)$. We prove the following result concerning the structure of the moduli space.

\begin{thm}\label{t1.1}
Let $P^{\dagger}$ be a principal $U(r)$-orbibudle over a torsion-free $G_2$-orbifold $(X, \phi)$ whose Chern classes satisfy 
\begin{equation}\label{e1.3}
(r-1)c^2_1(P^{\dagger}) - 2rc_2(P^{\dagger}) = 0 \in H^4_{\orb}(X;\Z).
\end{equation}
Suppose $\pmb{\pi} \in \mathscr{P}$ is a generic small perturbation. Then the following hold.
\begin{enumerate}
\item The moduli space $\M_{\phi, \pmb{\pi}}(X, P^{\dagger})$ is compact. 
\item The irreducible locus $\M^*_{\phi, \pmb{\pi}}(X, P^{\dagger})$ is a smooth $0$-dimensional manifold, i.e. it's transversely cut out by the perturbed $G_2$-instanton equation. 
\end{enumerate}
\end{thm}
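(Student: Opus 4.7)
The plan is to handle the two conclusions separately, both exploiting the holonomy perturbation space $\mathscr{P}$ constructed in \autoref{s4}.

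\textbf{Compactness.} My starting point is a $G_2$-analogue of the Chern--Weil identity. Under the $\phi$-induced splitting $\Lambda^2 T^*X = \Lambda^2_7 \oplus \Lambda^2_{14}$, decompose $F_A = F_A^{7} + F_A^{14}$; then there is a pointwise formula of the shape
\[
\tr(F_A \wedge F_A) \wedge \phi \;=\; \kappa\bigl(2|F_A^{7}|^2 - |F_A^{14}|^2\bigr)\, d\vol_{\phi},
\]
whose integral over $X$ computes a universal multiple of $\bigl((r-1)c_1^2(P^{\dagger}) - 2rc_2(P^{\dagger})\bigr) \cup [\phi]$ paired with the orbifold fundamental class. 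Hypothesis \eqref{e1.3} makes this pairing vanish, so $\|F_A^{14}\|_{L^2}$ is dominated by $\|F_A^{7}\|_{L^2}$. For a $\pmb{\pi}$-perturbed projective $G_2$-instanton, the equation $F_A \wedge \star_{\phi}\phi = -\sigma_{\pmb{\pi}}(A)$ bounds $|F_A^{7}|$ pointwise by $|\sigma_{\pmb{\pi}}(A)|$, giving an a priori bound $\|F_A\|_{L^2} \leq C\|\pmb{\pi}\|_{\mathscr{P}}$. Combining this with an $\varepsilon$-regularity theorem for perturbed projective $G_2$-instantons in the spirit of Tian, adapted to the orbifold setting, together with the Uhlenbeck patching lemma, allows me to extract from any sequence $\{A_n\} \subset \M_{\phi, \pmb{\pi}}(X, P^{\dagger})$ a subsequence that converges modulo gauge outside a finite singular set of bubbling points. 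When $\pmb{\pi}$ is small enough that the total $L^2$-energy falls below the $\varepsilon$-threshold, this bubbling locus must be empty and smooth convergence modulo $\G$ follows.

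\textbf{Transversality.} I would introduce the universal parameterized moduli space
\[
\M^* := \bigl\{(A, \pmb{\pi}) \in \A^* \times \mathscr{P} \,:\, F_A \wedge \star_{\phi} \phi + \sigma_{\pmb{\pi}}(A) = 0\bigr\}/\G,
\]
where $\A^*$ denotes the irreducible locus. The linearization of the equation together with the Coulomb slice condition $d_A^*a = 0$ yields a first-order operator whose symbol agrees with that of the unperturbed $G_2$-instanton deformation operator, hence is elliptic and Fredholm. Its index is zero by a topological computation in which \eqref{e1.3} appears once again. The plan is to show that the full differential in both $(A, \pmb{\pi})$ is surjective along $\M^*$, which by duality reduces to proving that any cokernel element $\xi \in \Omega^6(X; \g_P)$ that is $L^2$-orthogonal to all $\pmb{\pi}$-variations of $\sigma_{\pmb{\pi}}(A)$ must vanish identically. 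Following the Donaldson--Kronheimer--Mrowka template, unique continuation for the formal adjoint of the deformation operator rules out $\xi$ having an open zero locus, and the density of holonomy loops through generic open subsets of $X$ then produces a perturbation direction pairing nontrivially with $\xi$. The Sard--Smale theorem finally delivers a residual subset of $\mathscr{P}$ over which the fiber $\M^*_{\phi, \pmb{\pi}}(X, P^{\dagger})$ is a smooth zero-dimensional manifold.

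\textbf{Main obstacle.} The most delicate point will be the transversality argument at orbifold singularities. The construction of $\mathscr{P}$ must deliver holonomy perturbations whose $\pmb{\pi}$-derivatives span enough of $\Omega^6(X; \g_P)$ to kill every cokernel direction while remaining supported away from the singular strata, and unique continuation must hold across those strata so that cokernel elements are detected by loops in the smooth part of $X$. The compactness step, although analytically involved because of bubbling on orbifold charts, is comparatively rigid once the Chern--Weil identity forces the total energy to be proportional to $\|\pmb{\pi}\|_{\mathscr{P}}$, making bubbles unavailable in the small-perturbation regime.
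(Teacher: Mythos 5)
Your overall architecture matches the paper's --- the energy identity forced by \eqref{e1.3} and $d\phi=0$ makes the total curvature $L^2$-norm proportional to $\|\pmb{\pi}\|$, and transversality comes from a parameterized Sard--Smale argument over $\mathscr{P}$ --- but both halves are executed by routes genuinely different from the paper's. For compactness, the paper does not invoke a Tian-style $\varepsilon$-regularity/bubbling dichotomy; it gives a self-contained local argument: Uhlenbeck gauge fixing, an interior elliptic estimate for the overdetermined operator $d^*\oplus\pi_7\circ d$ (whose kernel on compactly supported forms vanishes precisely because $\phi_0$ is exact on a ball), a monotonicity formula for the normalized energy $r^{-3}\int_{B_r}|F_A|^2$, and a covering/bootstrapping argument. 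Your route reaches the same conclusion because in the small-perturbation regime the bubbling set is empty, but you must make the monotonicity formula explicit: in dimension $7$ the $L^2$-norm of curvature is supercritical, so a small global bound $\|F_A\|_{L^2}\leq C\|\pmb{\pi}\|$ does not by itself control the scale-invariant quantities ($\|F_A\|_{L^{7/2}}$ or $r^{-3}\int_{B_r}|F_A|^2$) on small balls --- concentration is ruled out only by monotonicity, which is the analytic heart of the paper's Section 5 and is hidden inside your appeal to ``$\varepsilon$-regularity in the spirit of Tian.'' For transversality, the paper avoids your unique-continuation-plus-local-spanning argument entirely: since the Hessian is self-adjoint, the cokernel is identified with its kernel, and a kernel element $a$ orthogonal to all perturbation directions is ruled out globally by the fact that the cylinder functions separate points of $\B^*_k$ (via Donaldson's lemma recovering conjugacy of holonomies from traces of words, plus density of the loops and irreducibility). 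Your Donaldson--Kronheimer template is a legitimate alternative, but it carries exactly the burden you flag --- proving that the $\eta$-derivatives of holonomy span enough of $\g_P$ pointwise near a point where the cokernel element is nonzero --- which the paper's separation-of-points argument sidesteps. One small correction: the vanishing of the index of the deformation operator needs no topological computation and does not use \eqref{e1.3}; it follows immediately from the formal self-adjointness of $L_A$.
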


We explain why the torsion-free assumption \eqref{e1.1} plays an important role in \autoref{t1.1}. The closedness condition $d\phi = 0$ is used to provide an a priori bound on the $L^2$-norm of curvature for $G_2$-instantons. When the instanton equation is not perturbed, the requirement \eqref{e1.3} combined with the closedness of $\phi$ implies that all $G_2$-instantons are flat connections. The coclosed condition for $\phi$ is used essentially to get an elliptic deformation theory for $G_2$-instantons. 

We only sort out the orientability issue for the moduli space $\M_{\phi}(X, P^{\dagger})$ in limited cases of orbifolds. Concretely, we require the singular set of $X$ be either locally modeled on the fixed point set of a $\phi$-preserving involution or an intersection of the former models in certain proper sense. Locally, it forces the singular set to be either `associative' (calibrated submanifold) or a curve arising as the intersection of two associative submanifolds. Nevertheless, such orbifolds have covered most cases appeared in Joyce's construction. We call such orbifolds simple $G_2$-orbifolds. 

Besides \eqref{e1.3}, we also make another topological assumption on $P^{\dagger}$ for the purpose of avoiding reducible instantons. We call such bundles admissible in \autoref{d3.10}. With these preparations, we can define an enumerative invariant.

\begin{thm}
Let $P^{\dagger}$ be an admissible $U(r)$-bundle over a torsion-free, compact, smooth $G_2$-orbifold $(X, \phi)$. Then counting irreducible projective $G_2$-instantons on $P^{\dagger}$ defines an invariant 
\[
n_{\phi}(X, P^{\dagger}) := \#\M^*_{\phi, \pmb{\pi}}(X, P^{\dagger}) \in \Z/2. 
\] 
If we further require $(X, \phi)$ be a simple $G_2$-orbifold, and $c_1(P^{\dagger})$ even or $r=2$, then counting with signs gives us 
\[
n_{\phi}(X, P^{\dagger}) := \#\M^*_{\phi, \pmb{\pi}}(X, P^{\dagger}) \in \Z
\]
after fixing extra orientation data. Moreover, the invariant $n_{\phi}(X, P^{\dagger})$ is invariant under $C^0$-deformation of torsion-free $G_2$-structures on $X$. 
\end{thm}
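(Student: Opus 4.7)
The argument follows the standard gauge-theoretic paradigm: use \autoref{t1.1} to set up a well-defined count, then prove invariance through parametrised cobordisms.

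For the $\Z/2$-valued count, the admissibility hypothesis on $P^{\dagger}$ rules out reducible perturbed projective $G_2$-instantons, so $\M^*_{\phi,\pmb{\pi}} = \M_{\phi,\pmb{\pi}}$. \autoref{t1.1} then gives that, for generic small $\pmb{\pi} \in \mathscr{P}$, this moduli space is a compact smooth $0$-manifold, hence a finite set, and I would define $n_{\phi}(X,P^{\dagger})$ as its cardinality modulo $2$. Independence from the choice of $\pmb{\pi}$ comes from a parametrised cobordism: given generic small $\pmb{\pi}_0, \pmb{\pi}_1$, connect them by a generic small path $\{\pmb{\pi}_t\}_{t \in [0,1]}$ in $\mathscr{P}$, and show via a one-parameter extension of the transversality step in \autoref{t1.1} that $\bigcup_t \M^*_{\phi, \pmb{\pi}_t} \times \{t\}$ is a compact smooth $1$-manifold with boundary $\M^*_{\phi, \pmb{\pi}_1} \sqcup \M^*_{\phi, \pmb{\pi}_0}$.

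For invariance under a $C^0$-deformation $\{\phi_s\}_{s \in [0,1]}$ of torsion-free $G_2$-structures, I would run the analogous parametrised argument in $s$ and $\pmb{\pi}$ jointly. The closedness $d\phi_s = 0$ is needed uniformly in $s$ to maintain the a priori $L^2$-curvature bound that drives the compactness in \autoref{t1.1}, and this stability is precisely what $C^0$-control of $\phi_s$ provides; the coclosedness $d \star_{\phi_s} \phi_s = 0$ enters only to ensure an elliptic deformation theory at each $s$, which is inherited from the torsion-free assumption on each $\phi_s$ together with a Sobolev-slice description of the parameter space.

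The delicate part is the integer-valued refinement under the simple orbifold and parity hypotheses. Following the Donaldson-type recipe, I would construct a determinant line bundle $\mathscr{L}$ over the irreducible configuration space $\A^*/\G$ from the index of the linearised perturbed $G_2$-instanton operator coupled to Coulomb gauge, and a sign at a regular point of $\M^*_{\phi,\pmb{\pi}}$ is the comparison between the fibre of $\mathscr{L}$ and a global trivialisation. The main obstacle is trivialising $\mathscr{L}$ in the orbifold setting: the contributions from singular strata have to be made explicit. The simple $G_2$-orbifold hypothesis confines these strata to fixed loci of $\phi$-preserving involutions and their transverse intersections, where an equivariant index computation becomes tractable; the parity condition on $c_1(P^{\dagger})$, or the alternative $r=2$ using $PU(2) \cong SO(3)$ to borrow orientability from an associated real $3$-plane bundle, kills the residual $\Z/2$-obstruction coming from twists by the centre $Z(r) \subset U(r)$. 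Once $\mathscr{L}$ is trivialised relative to the chosen orientation data, the signed count is well-defined, and orienting the $1$-dimensional cobordisms from the previous paragraph proves that the resulting integer is invariant under change of $\pmb{\pi}$ and under $C^0$-deformations of $\phi$.
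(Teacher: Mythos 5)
Your overall architecture coincides with the paper's: \autoref{t1.1} for the finite count, a one-parameter cobordism in $\pmb{\pi}$ for perturbation-independence, a joint cobordism in $(\pmb{\pi}, t)$ for deformation invariance, and a trivialization of the determinant line of the linearized operator for the signed count. But the orientability step --- the only genuinely non-routine part of this theorem --- is not actually carried out, and the mechanism you offer for why the hypotheses kill the obstruction is not the right one. In the paper the obstruction is the parity of the spectral flow $\Sf(L_{A_t})$ along a path joining $A$ to $u\cdot A$ for a gauge transformation $u$, equivalently the parity of the index of $S_u(A) = d/dt + L_{A_t}$ on the mapping torus $S^1\times X$. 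This is computed via Kawasaki's orbifold index theorem; after subtracting a suitable multiple of the index twisted by $P^{\dagger}$ itself and using that $p_1(S^1\times X)$ is divisible by $4$, everything collapses to $\int_{S^1\times X} c_3(P^{\dagger}_u)\cdot\pi^* c_1(P^{\dagger}) \bmod 2$. This vanishes when $r=2$ because $c_3$ of a rank-$2$ bundle is zero, and when $c_1(P^{\dagger})$ is even for the obvious reason; it has nothing to do with ``twists by the centre $Z(r)$,'' and the observation $PU(2)\cong SO(3)$ does not by itself trivialize the index line --- orientability of $\g_P$ as a vector bundle is not the issue. The simple-orbifold hypothesis enters exactly where you say it does: it makes the singular-stratum contributions in the index formula (supported on $2$- and $4$-dimensional strata of $S^1\times X$) explicit, and they turn out to contribute $\pm\int_{\Sigma^{(4)}} rc_2$, which also cancels in the comparison. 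Without this computation your claim that the parity hypotheses suffice is unsupported.

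A second, smaller gap: in the deformation-invariance step the path $\phi_t$ is only continuous in $t$, so $\mathcal{F}_{\Phi}$ cannot be differentiated in the $t$-direction, and no Sobolev-slice description of the parameter space will fix that. The paper gets around this because \autoref{l4.4} already gives surjectivity of the derivative using only the $(\pmb{\pi}, A)$ directions, so the implicit function theorem yields a $C^0$ (not smooth) $1$-manifold; the orientation must then be transported via a trivialization of the determinant line over a neighborhood of the cobordism in $[0,1]\times\A_k/\G_{k+1}$ rather than along the curve itself. Your write-up should make both points explicit.
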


Although we haven't found examples of torsion-free $G_2$-structures on a fixed smooth orbifold $X$ resulting in different values of $n_{\phi}(X, P^{\dagger})$, the torsion-free condition has been used in an essential way to deduce the deformation invariance. Because we need the closedness to guarantee compactness in \autoref{t1.1}. Note that the $h$-principle has been worked out for coclosed $G_2$-structures by Crowley--Nordström \cite{CN15}. So connecting torsion-free $G_2$-structures by a path of coclosed $G_2$-structures is not hard to achieve. It would be interesting to understand how the moduli space of admissible bundles can change at a jumping point where the $G_2$-structure starts to fail being closed. 

Since we work on (projectively) flat bundles, there is chance that the invariant we defined is merely topological. This is indeed the case if the moduli space of projectively flat connections is already non-degenerate in the following sense. 

\begin{dfn}
Let $A^{\dagger}$ be an irreducible projectively flat connection on an admissible principal $U(r)$-bundle $P^{\dagger}$. We say $A$ is non-degenerate if 
\[
H^1(X, \rho_A) = 0,
\]
where $\rho_A: \pi^{\orb}_1(X) \to SO(r^2-1)$ is the representation associated to the induced flat connection on the bundle $\su(P^{\dagger})$ consisting of traceless skew-Hermitian endomorphisms of $P^{\dagger}$. 
\end{dfn}

\begin{prop}\label{p1.5}
Let $P^{\dagger}$ be an admissible $U(r)$-bundle over a torsion-free, compact, smooth $G_2$-orbifold $(X, \phi)$. Suppose every irreducible projectively flat connection on $P^{\dagger}$ is non-degenerate. Then the invariant $n_{\phi}(X, P^{\dagger})$ is independent of the choices of the torsion-free $G_2$-structure $\phi$. 
\end{prop}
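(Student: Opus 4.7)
The plan is to show that, under the non-degeneracy hypothesis, the zero perturbation already renders the $G_2$-instanton moduli space regular, whence $n_\phi(X, P^\dagger)$ coincides with the (signed) count of irreducible projectively flat connections---a purely topological object.

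First I would reduce to the flat locus: the Chern class condition \eqref{e1.3} combined with $d\phi = 0$ forces every unperturbed projective $G_2$-instanton to be projectively flat, so $\M_{\phi,0}(X, P^\dagger) = \M_{\mathrm{flat}}(X, P^\dagger)$ as sets. The substantive step is a Weitzenbock-type argument establishing that the $G_2$-instanton deformation operator $L_A^\phi = (d_A^*,\, \pi_7 d_A) : \Omega^1(\g_P) \to \Omega^0(\g_P) \oplus \Omega^2_7(\g_P)$ is invertible at every irreducible flat $A$. Namely, suppose $a \in \Omega^1(\g_P)$ satisfies $d_A^* a = 0$ and $\pi_7(d_A a) = 0$, so that $\beta := d_A a \in \Omega^2_{14}(\g_P)$. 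Invoking the characterizing identity for $\Omega^2_{14}$, which expresses $\beta \wedge \phi$ as $\pm \star \beta$, together with the Leibniz rule, $F_A = 0$, and $d\phi = 0$, one obtains
\[
0 = d_A(\beta \wedge \phi) = \pm\, d_A(\star \beta),
\]
which translates to $d_A^* d_A a = 0$. Pairing with $a$ gives $\|d_A a\|_{L^2}^2 = 0$, hence $d_A a = 0$; together with $d_A^* a = 0$, the form $a$ is a $d_A$-harmonic $1$-form on $\su(P^\dagger)$, and the non-degeneracy hypothesis $H^1(X, \rho_A) = 0$ forces $a = 0$.

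With transversality of the unperturbed equation in hand---and compactness of $\M^*_{\mathrm{flat}}(X, P^\dagger)$ automatic from compactness of the $PU(r)$-character variety of $\pi_1^{\orb}(X)$---the zero perturbation lies in the open regular locus of parameter space, so the signed count is locally constant there and equals $\#\M^*_{\mathrm{flat}}(X, P^\dagger)$. The underlying set is manifestly $\phi$-independent. For the signed version I would use that invertibility of $L_A^\phi$ at every torsion-free $\phi$ canonically trivializes the determinant line of the deformation operator across the space of torsion-free $G_2$-structures, making the sign at each $[A]$ locally constant in $\phi$; combined with the $C^0$-deformation invariance of $n_\phi$ established earlier in the paper, this propagates the sign consistently across the entire moduli of torsion-free structures.

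The main obstacle is the Weitzenbock cancellation at the center of the argument: it uses both the closedness of $\phi$ and the algebraic identity for $\Omega^2_{14}$ in an essential way, and collapses without torsion-freeness---a concrete realization of the dictum that $d\phi = 0$ is what productively couples the $G_2$-instanton equation to flatness. Keeping sign conventions straight in $\beta \wedge \phi = \pm \star \beta$ and in $d_A^* = \pm \star d_A \star$ also needs care. A secondary, more conceptual subtlety is globalizing the orientation across potentially disconnected components of the moduli of torsion-free $G_2$-structures, for which I would lean on the already-established $C^0$-deformation invariance of $n_\phi$ rather than construct a global trivialization from scratch.
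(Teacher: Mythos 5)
Your proposal is correct and follows essentially the same route as the paper: the central step is the same Weitzenböck/Stokes cancellation (using $F_A=0$ and $d\phi=0$, plus the $\Lambda^2_{14}$ eigenvalue identity) showing that $\pi_7(d_Aa)=0$ forces $d_Aa=0$, which identifies $H^1(E_A(X))$ with $H^1(X,\ad_A)$ and hence, under non-degeneracy, makes the unperturbed flat moduli space regular so that $\pmb{\pi}=0$ may be used and the count is topological, with signs agreeing because the (vanishing) deformation cohomology is canonically identified across torsion-free $G_2$-structures. Your pointwise $d_A^*d_Aa=0$ phrasing is just a repackaging of the paper's integral identity $|\pi_{14}(d_Aa)|^2\vol = 2r\,d(\tr(a\wedge d_Aa)\wedge\phi)$, so the two arguments coincide in substance.
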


We ask for the torsion-free condition in \autoref{p1.5} to ensure that the invariant $n_{\phi}(X, P^{\dagger})$ is defined. In the non-degenerate case, the moduli spaces together with their deformation structures are identified canonically with respect to different torsion-free $G_2$-structures. When $(X, \phi)$ is a flat orbifold, a Weitzenböck formula argument shows that every irreducible projectively flat connection is non-degenerate automatically. In particular, all orbifolds arising from Joyce's construction satisfy the non-degeneracy condition automatically. We compute the invariant for one such orbifold by exhausting all irreducible representations. Such computation for many other flat orbifolds in Joyce's construction can be worked out in a similar way.

\begin{prop}\label{p1.6}
Let $(X, \phi)$ be the flat simple orbifold constructed in Example 3 of \cite{J96}. Then there are admissible $U(2)$-bundles $P^{\dagger}$ on $(X, \phi)$ whose corresponding invariant is $n_{\phi}(X, P^{\dagger}) = 2^6 \in \Z$. 
\end{prop}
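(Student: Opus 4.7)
The approach combines \autoref{p1.5} with an explicit holonomy computation. Since the orbifold $(X,\phi)$ coming from Joyce's Example 3 is flat, the Weitzenböck formula for the flat Levi--Civita connection on $\su(P^{\dagger})$ forces $H^1(X,\rho_A) = 0$ at every irreducible projectively flat connection $A^{\dagger}$, so the non-degeneracy hypothesis of \autoref{p1.5} is automatic. The invariant $n_\phi(X,P^{\dagger})$ therefore reduces to a signed count of conjugacy classes of irreducible representations $\rho : \pi_1^{\orb}(X) \to PU(2) \cong SO(3)$ whose topological type is prescribed by $P^{\dagger}$.

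First I would write $\pi_1^{\orb}(X)$ as the crystallographic extension $1 \to \Z^7 \to \pi_1^{\orb}(X) \to \Gamma \to 1$ coming from the flat presentation $X = T^7/\Gamma$ of Example 3 in \cite{J96}. Next I would enumerate $SO(3)$-representations of this extension: the restriction to $\Z^7$ is abelian and so lands in a maximal torus $SO(2) \subset SO(3)$, but equivariance under the $\Gamma$-action, which must permute the coordinate axes of any frame it preserves, forces the image of each generator of $\Z^7$ into the $\Z/2$-subgroup generated by the order-two rotations about three mutually perpendicular axes. Irreducibility then amounts to the combined image of $\pi_1^{\orb}(X)$ not being contained in any single $SO(2)$, while the finitely many lifts of $\Gamma$ compatible with a given restriction to $\Z^7$ contribute only a discrete set of choices. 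A direct sign enumeration over the seven generators of $\Z^7$ gives $2^7$ a priori cases, from which the reducible families and the cases forbidden by the $\Gamma$-equivariance constraint are removed, leaving exactly $2^6$ conjugacy classes after accounting for the $PU(2)$-Weyl action.

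Third, I would identify this set of representations with the irreducible locus for a single admissible $U(2)$-orbibundle $P^{\dagger}$ by computing the associated Stiefel--Whitney class $w_2(P^{\dagger}) \in H^2_{\orb}(X;\Z/2)$ and verifying that this class obstructs every reduction to $S(U(1) \times U(1))$, so that \eqref{e1.3} holds and the admissibility hypothesis of \autoref{d3.10} is satisfied.

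The main obstacle is the sign computation. With $r=2$ and $(X,\phi)$ simple, Theorem 1.2 produces a $\Z$-valued invariant only after fixing orientation data, and the contribution of each irreducible flat representation is given by the mod-$2$ spectral flow of a reference deformation operator, which need not be uniform a priori. I expect to handle this by a symmetry argument: the set of irreducible projectively flat connections carries a transitive action by tensoring with flat $\Z/2$-valued line bundles on the orbifold, and this action lifts to a trivialization of the orientation line of the moduli space because the index bundle of the deformation complex deforms by a topologically trivial family. Hence all $64$ contributions acquire the same sign and the signed count is $+2^6$. The technical step that needs care is identifying the orientation data constructed in the proof of Theorem 1.2 with a $\Gamma$-equivariant structure on the determinant line in the flat background, and ruling out a global sign flip under the $H^1(X;\Z/2)$-action.
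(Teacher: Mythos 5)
Your skeleton---flatness plus the Weitzenb\"ock argument gives non-degeneracy, so by \autoref{p1.5} the unperturbed count already computes the invariant, and one then enumerates projectively flat connections---matches the paper, and your sign discussion is in the right spirit: the paper likewise observes that all points of the moduli space are related by $SO(3)$-gauge transformations and that $H^1(X,\ad_A)=0$ identifies the deformation complexes canonically, so all local signs agree.

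There is, however, a genuine gap in where the number $2^6$ comes from. You reduce $n_{\phi}(X,P^{\dagger})$ to a signed count of conjugacy classes of irreducible representations $\pi_1^{\orb}(X)\to SO(3)$ carried by a single bundle and assert that a sign enumeration over the translation generators leaves exactly $2^6$ such classes. This identification is incorrect: the moduli space is the quotient of the flat connections by the determinant-one gauge group $\G$, not by the full $SO(3)$-gauge group $\G'$, and only the latter quotient equals $\Hom(\pi_1^{\orb}(X),SO(3))/\Ad$. The paper's eight-case analysis of the relations in $\pi_1^{\orb}(X)$ shows that every irreducible representation has image the Klein group $V_4$ with $\rho(\tau_6)=\rho(\tau_7)=1$, and then selects admissible bundles (via $w_2$ of the pullback to $T^7$ and the holonomy around the linking circles of the twelve singular tori) supporting exactly \emph{one} conjugacy class of irreducible representations. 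The multiplicity $2^6$ arises entirely from the exact sequence
\[
1 \longrightarrow \Map_{\orb}(X,\Z/2) \longrightarrow \G \longrightarrow \G' \longrightarrow H^1_{\orb}(X;\Z/2) \longrightarrow 1,
\]
under which the single $\G'$-orbit of flat connections splits into $|H^1_{\orb}(X;\Z/2)|/|V_4| = 2^8/2^2 = 2^6$ distinct $\G$-orbits, the divisor $V_4$ being the isotropy group of the unique flat connection inside $\G'$. Your proposal never engages with this discrepancy between the two gauge groups, and the claimed enumeration of $2^6$ conjugacy classes (``$2^7$ a priori cases $\ldots$ leaving exactly $2^6$'') is not supported by the group relations; without the gauge-group comparison the argument cannot produce the factor $2^6$ for a fixed admissible bundle.
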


To conclude, we mention that a homotopy invariant $\nu(X, \phi) \in \Z/48$ for $G_2$-structures (not necessarily torsion-free) over $7$-manifolds was introduced by Crowley--Nordström \cite{CN15} as an `$\hat{A}$-defect'. If we consider non-simply-connected $G_2$-manifolds $(X, \phi)$ with $H^1(X; \Z) = 0$, an invariant $n_{\phi}(X) \in \Z$ can be defined by summing up the counting of all irreducible $G_2$-instantons over flat $SU(2)$-bundles. We suspect there might be a relation between $n_{\phi}(X)$ and $\nu(X, \phi)$, which is analogous to the case of the Casson invariant and Rohlin invariant on homology $3$-spheres. Computations of the $\nu$-invariant over Joyce's manifolds were carried out in the work of Scaduto \cite{S20}. 

\subsection{\em Outline}\label{ss1.3} \hfill
 
\vspace{3mm}

In \autoref{s2}, we review necessary materials concerning torsion-free $G_2$-orbifolds. In \autoref{s3}, we set up the frame work for the moduli space of $G_2$-instantons on $G_2$-orbifolds, including deformation, reducibility, and orientation. In \autoref{s4}, we introduce holonomy perturbations following the work of Donaldson, and prove transversality for irreducible moduli spaces. In \autoref{s5}, we prove the compactness of moduli spaces under small perturbations. In \autoref{s6}, we define the invariant $n_{\phi}(X)$ and verify its deformation invariance. Calculations for examples are also included in the end. 

\subsection*{Acknowledgment}
The author is grateful for Simon Donaldson for suggesting this project with patient guidance and encouragement. He also wants to thank Donghao Wang for enlightening discussions.

\section{\large \bf Preliminaries}\label{s2}

In this section, we review the basic materials centering around gauge theory over $G_2$-manifolds, and explain how to extend corresponding notions to the orbifold setting. For a more detailed exposition on the geometry of $G_2$-manifolds, one may consult Joyce's book \cite{J00}. As for the gauge theoretical perspective on $G_2$-manifolds, the paper by Donaldson--Segal \cite{DS11} is an excellent reference. 

\subsection{\em Linear Algebra Related to $G_2$}\label{ss2.1} \hfill
 
\vspace{3mm}

We start with a brief review on the linear algebra behind the Lie group $G_2$. For a more comprehensive treatment, one may consult the notes by Salamon--Walpuski \cite{SW17}. Let $V=(\R^7, g_0)$ be the $7$-dimensional real vector space $\R^7$ equipped with the standard Euclidean inner product $g_0 = \sum_{i=1}^7 dx_i^2$, where $(x_1, ..., x_7)$ are the standard coordinates on $\R^7$. An orientation over $V$ is also fixed so that the corresponding volume form is $\vol_0 = dx_1 \wedge ... \wedge dx_7$. Over $V$, we have a $3$-form 
\begin{equation}\label{e2.1}
\phi_0 := dx_{123} + dx_{145} +dx_{167} + dx_{246} - dx_{257} - dx_{347} - dx_{356},
\end{equation} 
where $dx_{ijk} := dx_i \wedge dx_j \wedge dx_k$. We define $G_2$ to be the subgroup of $GL_{\R}(7)$ consisting of elements that preserve the $3$-form $\phi_0$. It is known that $G_2$ is a compact, connected, simply-connected, semisimple Lie group of real dimension $14$ (c.f. \cite{J00, SW17}). The $3$-form $\phi_0$ defines a cross-product on $V$ via the relation 
\begin{equation}\label{e2.2}
\phi_0(u, v, w) = g_0(u \times v, w), \quad \forall \; u, v, w \in V. 
\end{equation}
Here a cross-product means a skew-symmetric bilinear form $\times: V \times V \to V$ satisfying 
\[
g_0(u \times v, u) = 0 \quad \text{ and } \quad |u \times v|^2 = |u|^2|v|^2 - g_0(u, v)^2, \quad \forall \; u, v \in V. 
\]
If we identify $\R^7$ with the imaginary part of the octonions $\mathbb{O}$, the cross product given by $\phi_0$ is the octonionic multiplication followed by taking imaginary part. The volume form $\vol_0$ of $V$ is related to the $3$-form $\phi_0$ by 
\begin{equation}\label{e2.3}
\iota_u \phi_0 \wedge \iota_v \phi_0 \wedge \phi_0 = 6 g_0(u_0, v_0) \vol_0, \quad \forall \; u, v \in V,
\end{equation}
where $\iota_u$ is the contraction map. (\ref{e2.3}) implies that the $3$-form $\phi_0$ induces the inner product $g_0$ once an orientation of $V$ is chosen.

The induced $G_2$-action on the space of alternating $2$-tensors $\Lambda^2 V^*$ splits the space into the sum of irreducible $G_2$-representations:
\begin{equation}\label{e2.4}
\Lambda^2 V^* := \Lambda^2_7 \oplus \Lambda^2_{14},
\end{equation}
where $\Lambda^2_7$ is isomorphic to the fundamental representation of $G_2$ on $\R^7$, and $\Lambda^2_{14}$ is isomorphic to the adjoint $G_2$-representation on its Lie algebra $\g_2$. The irreducible representations can also be written down explicitly as (c.f. \cite[Theorem 8.5]{SW17}):
\begin{equation}\label{e2.5}
\Lambda^2_7=\left\{\omega \in \Lambda^2V^*: \star(\phi_0 \wedge \omega) =  2 \omega) \right\} \text{ and } \Lambda^2_{14}=\left\{ \omega \in \Lambda^2V^*: \star(\phi_0 \wedge \omega) = - \omega \right\}. 
\end{equation}
In other words, $\Lambda^2_7$ and $\Lambda^2_{14}$ are the $2$- and $-1$-eigenspaces of the operator $\star(\phi_0 \wedge -)$ acting on $\Lambda^2V^*$ respectively. Using the relation 
\[
\star(\phi_0 \wedge \star( \phi_0 \wedge \omega)) = 2\omega + \star(\phi_0 \wedge \omega), \quad \forall \; \omega \in \Lambda^2V^*,
\]
one can write down the projection map as 
\begin{equation}\label{e2.6}
\pi_7(\omega) = \frac{1}{3}( \omega + \star(\phi_0 \wedge \omega)) \text{ and } \pi_{14}(\omega) = \frac{1}{3}(2\omega - \star(\phi_0 \wedge \omega)).  
\end{equation}
The equation (\ref{e2.6}) immediately gives us the following generalization of (\ref{e2.3}):
\begin{equation}\label{e2.7}
\omega \wedge \omega \wedge \phi_0 = \left( 2|\pi_7(\omega)|^2 - |\pi_{14}(\omega)|^2 \right) \vol_0, \quad \forall \; \omega \in \Lambda^2 V^*.  
\end{equation}

We denote by $\mathcal{P}^3(V)$ the orbit of $\phi_0$ under the $GL^+_{\R}(7)$-action, where $GL^+_{\R}(7)$ is the group consisting of orientation-preserving automorphisms of $V$. We refer to an element $\phi \in \mathcal{P}^3(V)$ as a positive $3$-form. One sees immediately that $\mathcal{P}^3(V)$ is an open subset of $\Lambda^3V^*$ which can be identified with $GL^+_{\R}(7) / G_2$. The discussion above tells us that every positive $3$-form $\phi$ determines an inner product $g_{\phi}$ and volume form $\vol_{\phi}$ on an oriented $\R^7$.

\begin{dfn}\label{d2.1}
Let $\phi \in \mathcal{P}^3(V)$ be a positive $3$-form. A oriented $3$-plane $W \subset V$ is said to be $\phi$-associative if $\vol_W = \phi|_W$, where $\vol_W$ is the volume form on $W$ induced from $\g_{\phi}$. 
\end{dfn}
Due to (\ref{e2.2}), every $\phi$-associative subspace $W$ arises as the span of $u, v$ and $u \times_{\phi} v$ for some linearly independent vectors $u, v \in V$. Associative subspaces are closely related to invariant subspaces of $\R^7$ under the action of finite subgroups of $G_2$. For instance, any element $\sigma \in G_2$ of order $2$ fixes a associative subspace in $V$ (c.f. \cite[Proposition 10.8.1]{J00}). In general, the space of invariant vectors in $V$ under the action of a finite subgroup in $G_2$ can be described as follows.

\begin{prop}[{\cite[Proposition 11.1.3]{J00}}]
Let $\Gamma \subset G_2$ be a finite subgroup, and 
\[
V^{\Gamma} :=\{ v \in V: \gamma v = v \; \; \forall \gamma \in \Gamma\}
\]
be the subspace of $\Gamma$-invariant vectors. Then $V^{\Gamma}$ takes one of the following forms.
\begin{enumerate}[label=(\alph*)]
\item $V^{\Gamma}= V \text{ or } \{0\}$. 
\item $V^{\Gamma}$ is an associative subspace, in which case $\Gamma$ is conjugate to a finite subgroup of $SU(2)$ so that $V/\Gamma \simeq \R^3 \times \C^2/\Gamma$. 
\item $V^{\Gamma} \simeq \R$, in which case $\Gamma$ is conjugate to a finite subgroup of $SU(3)$ so that $V/G \simeq \R \times \C^3/\Gamma$. 
\end{enumerate}
\end{prop}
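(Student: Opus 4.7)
The plan hinges on a single algebraic observation: since $\Gamma \subset G_2$ preserves both $g_0$ and $\phi_0$, it preserves the cross product $\times_{\phi_0}$ of (\ref{e2.2}), so $V^{\Gamma}$ is a linear subspace closed under the cross product. I would first use this closure to restrict the possible dimensions of $V^\Gamma$ to $\{0,1,3,7\}$, and then identify the pointwise stabilizer of $V^\Gamma$ in $G_2$ in each case.

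For the dimension count, $\dim V^\Gamma = 2$ is immediately excluded: if $u, v$ span $V^\Gamma$, then $u \times v \in V^\Gamma$ is nonzero (by $|u \times v|^2 = |u|^2 |v|^2 - g_0(u, v)^2$) yet is orthogonal to both $u$ and $v$, a contradiction. For $\dim V^\Gamma \in \{4,5,6\}$, using $G_2$-homogeneity on pairs (associative plane, orthogonal unit vector) I would normalize so that $e_1, e_2, e_3, e_4 \in V^\Gamma$; reading off $e_i \times e_4$ for $i = 1, 2, 3$ directly from the coefficients in (\ref{e2.1}) places $e_5, e_6, e_7$ in $V^\Gamma$ as well, forcing $V^\Gamma = V$. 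The surviving dimensions $0, 7, 1, 3$ correspond respectively to the two subcases of (a), to case (c), and to case (b).

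In case (b), closure under $\times_{\phi_0}$ together with $|w_1 \times w_2| = 1$ for orthonormal $w_1, w_2$ forces any orthonormal basis $w_1, w_2, w_3$ of $V^\Gamma$ to satisfy $w_1 \times w_2 = \pm w_3$, so $V^\Gamma$ is associative. Each $\gamma \in \Gamma$ then acts on the coassociative complement $W^\perp$ via $SO(W^\perp)$ while commuting with the three operators $J_i(u) := w_i \times u$. I would verify using the octonionic identity $w \times (w' \times u) + w' \times (w \times u) = -2 g_0(w, w') u$ for $u \perp w, w'$ that the $J_i$ satisfy the quaternion relations on $W^\perp$, exhibiting $W^\perp$ as a $1$-dimensional left $\mathbb{H}$-module; the centralizer of this $\mathbb{H}$-action in $SO(W^\perp)$ is then right multiplication by $Sp(1) \simeq SU(2)$, yielding $\Gamma \hookrightarrow SU(2)$ up to conjugation. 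The complex structure $J_1$ identifies $(W^\perp, J_1) \simeq \C^2$, so $V/\Gamma \simeq \R^3 \times \C^2/\Gamma$. Case (c) reduces to the classical fact (see e.g.\ \cite{SW17}) that $G_2$ acts transitively on $S^6 \subset V$ with stabilizer $SU(3)$ acting on $v^\perp \simeq \C^3$ by the standard representation.

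The main obstacle I anticipate is the quaternionic identification in case (b): rather than merely noting that each $J_i$ is an almost complex structure, one must establish the full anticommutation and multiplication rules $J_i J_j = -\delta_{ij} + \varepsilon_{ijk} J_k$ on $W^\perp$, so that the common centralizer of $J_1, J_2, J_3$ in $SO(W^\perp)$ collapses to $SU(2)$ rather than to a larger subgroup such as $Sp(1)\cdot Sp(1) \simeq SO(4)$. These relations are a manifestation of the alternative-algebra structure on the octonions and can be checked directly from the coefficients of $\phi_0$ in (\ref{e2.1}) after the normalization above.
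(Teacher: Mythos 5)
Your proposal is correct. The paper offers no proof of this statement --- it is quoted verbatim from Joyce's book (\cite[Proposition 11.1.3]{J00}) and used as a black box --- so there is no in-paper argument to compare against; what you have written is a legitimate self-contained derivation. The structure is the standard one: $V^{\Gamma}$ is closed under $\times_{\phi_0}$, hence $\R \oplus V^{\Gamma}$ is a composition subalgebra of $\mathbb{O}$ and $\dim V^{\Gamma} \in \{0,1,3,7\}$ (your direct exclusion of dimensions $2,4,5,6$ via the coefficients of \eqref{e2.1} is a fine substitute for invoking that classification), and the stabilizer identifications reduce to the known facts $\Stab_{G_2}(v) \simeq SU(3)$ and $\Stab_{G_2}^{\mathrm{pt}}(W) \simeq Sp(1)$ for $W$ associative. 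The one point you flag as delicate does check out: with the normalization $W = \operatorname{span}(e_1,e_2,e_3)$ one computes from \eqref{e2.1} that $J_1J_2 = -J_3$ and $J_1J_2J_3 = +\mathrm{id}$ on $W^{\perp}$, so the algebra generated by the $J_i$ is a single copy of $\mathbb{H}$ (not the full Clifford algebra $\mathbb{H}\oplus\mathbb{H}$), its commutant in $SO(W^{\perp})$ is $Sp(1)\simeq SU(2)$, and the centralizer does not enlarge to $SO(4)$.
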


\subsection{\em $G_2$-Orbifolds}\label{ss2.2} \hfill
 
\vspace{3mm}

Following the treatment in \cite{AJY07}, we first review the notion `effective orbifold' in the classical sense \cite{S56}. Let $X$ be a paracompact Hausdorff space. An $n$-dimensional orbifold chart on $X$ is a triple $(\tilde{U}, G, \phi)$ where 
\begin{enumerate}
\item $\tilde{U} \subset \R^n$ is an open neighborhood of the origin; 
\item $G$ is a finite subgroup of $O(n)$; 
\item $\phi:\tilde{U} \to X$ is a $G$-invariant map that induces a homeomorphism $\bar{\phi}: \tilde{U}/G \to \phi(\tilde{U})$. 
\end{enumerate}
An $n$-dimensional orbifold atlas $\mathscr{U} = \{(\tilde{U}_{\alpha}, G_{\alpha}, \varphi_{\alpha})\}$ consists of a family of $n$-dimensional orbifold charts that cover $X$ and satisfy the following compatibility assumption
\begin{itemize}
\item Given two charts $(\tilde{U}_{\alpha}, G_{\alpha}, \varphi_{\alpha})$ and $(\tilde{U}_{\beta}, G_{\alpha}, \varphi_{\beta})$, for any $x \in \varphi_{\alpha}(\tilde{U}_{\alpha}) \cap \varphi_{\beta}(\tilde{U}_{\beta})$ one can find a third chart $(\tilde{U}_{\gamma}, G_{\gamma}, \varphi_{\gamma})$ covering $x$ and smooth embeddings $\lambda_{\alpha\gamma}: \tilde{U}_{\gamma} \hookrightarrow \tilde{U}_{\alpha}$, $\lambda_{\beta\gamma}: \tilde{U}_{\gamma} \hookrightarrow \tilde{U}_{\beta}$ such that 
\[
\varphi_{\gamma} = \varphi_{\alpha} \circ \lambda_{\alpha\gamma} = \varphi_{\beta} \circ \lambda_{\beta\gamma}
\]
\end{itemize}
Given smooth embedding $\lambda_{\beta\alpha}: \tilde{U}_{\alpha} \to \tilde{U}_{\beta}$ satisfying $\varphi_{\alpha} = \varphi_{\beta} \circ \lambda_{\beta\alpha}$, it can be shown \cite{M02} that for each $g_{\alpha} \in G_{\alpha}$ there exists a unique element in $G_{\beta}$, denoted by $\lambda_{\beta\alpha}(g_{\alpha})$, such that 
\[
\lambda_{\beta\alpha} \circ g_{\alpha} = \lambda_{\beta\alpha}(g_{\alpha}) \circ \lambda_{\beta\alpha}.
\]
The assignment $\lambda_{\beta\alpha}: G_{\alpha} \to G_{\beta}$ is injective and makes the embedding $G_{\alpha}$-equivariant. 

\begin{dfn}
An $n$-dimensional orbifold $X$ is a paracompact Hausdorff space equipped with a maximal $n$-dimensional orbifold atlas.  
\end{dfn}

Given a point $x$ in an orbifold $X$, we can choose an orbifold chart $(\tilde{U}, G, \varphi)$ that covers $x$. Let $\tilde{x} \in \varphi^{-1}(x) \subset \tilde{U}$ be a preimage of $x$. We denote by $G_{\tilde{x}}$ the subgroup of $G$ consisting of stabilizers of $\tilde{x}$. Different choices of $\tilde{x}$ in $\varphi^{-1}(x)$ and orbifold charts give rise to canonically isomorphic stabilizers $G_{\tilde{x}}$. Thus we denote by $G_x$ as an abstract group isomorphism to $G_{\tilde{x}}$ for some $\tilde{x} \in \varphi^{-1}(x)$, and refer to $G_x$ as the isotropy group of $x$. Locally near $x$, the orbifold $X$ is modeled on $\tilde{U}_x/G_x$ where $\tilde{U}_x \subset \R^n$ is an open set so that $x$ is identified with the origin under the quotient. We denote by 
\begin{equation}
S_X:= \{ x \in X: G_x \neq 1\}
\end{equation}
the set of points with non-trivial isotropy group. Each point in $S_X$ is called a singular point of the orbifold $X$. 

Given an orbifold chart $(\tilde{U}_{\alpha}, G_{\alpha}, \varphi_{\alpha})$, one can consider the tangent bundle $T\tilde{U}_{\alpha}$ which is naturally a $G_{\alpha}$-equivariant bundle with the $G_{\alpha}$-action induced from differentiating the $G_{\alpha}$-action on the base $\tilde{U}_{\alpha}$. The orbifold charts $(T\tilde{U}_{\alpha}, G_{\alpha})$ can be patched together to get 
\[
TX:= \bigsqcup_{\alpha} (T\tilde{U}_{\alpha}/G_{\alpha}) / \sim,
\] 
where the patching map is given by the differential of the embeddings $\lambda_{\beta\alpha}$. The total space $TX$ is a $2n$-dimensional orbifold, which we refer to as the tangent bundle of $X$. There is a natural projection $\pi: TX \to X$ whose fiber at $x \in X$ is 
\[
\pi^{-1}(x) = T_{\tilde{x}}\tilde{U} / G_{\tilde{x}},
\]
where $\tilde{x} \in \phi^{-1}(x)$ is a point in the preimage of $x$.

Such a construction can be carried over to the cotangent bundle $T^*X$, the symmetric tensor bundle $\Sym^* T^*X$, and the alternating tensor bundle $\Lambda^* T^*X$. Locally over a chart $(\tilde{U}_{\alpha}, G_{\alpha})$ these bundles are $G_{\alpha}$-equivariant bundles. A global section of these bundles consists of locally $G_{\alpha}$-equivariant sections that are compatible under the patching maps. In particular, a Riemannian metric $g$ over an orbifold $X$ is a positive-definite section of $\Sym^2 T^*X$. Analogous to the manifold scenario, an orbifold Riemannian metric $g$ gives rise to a Levi--Civita connection on all tensor bundles of $X$, which we denote by $\nabla_g$. 

We say $X$ is orientable if $\Lambda^n T^*X$ admits a nowhere vanishing section. Such a section specifies an orientation of $X$. Combining with the Riemannian metric $g$, we get a volume form $\vol$ over $X$. The integration over $X$ is defined as follows. Locally over $U_{\alpha} = \varphi_{\alpha}(\tilde{U}_{\alpha}) \subset X$, a compactly supported $n$-form $\omega_{\alpha}$ on $U_{\alpha}$ arises as the $G_{\alpha}$-quotient of an equivariant $n$-form $\tilde{\omega}_{\alpha}$. Then we define 
\[
\int_{U_{\alpha}} \omega_{\alpha}:= \frac{1}{|G_{\alpha}|} \int_{\tilde{U}_{\alpha}} \tilde{\omega}_{\alpha},
\]
where the integral on the RHS is defined with the help of the volume form. To integrate a general $n$-form over $X$, we take a partition of unity subordinate to $\{U_{\alpha}\}$ to truncate the $n$-form, and add up integrals on the charts. 

Let $(\tilde{U}_{\alpha}, G_{\alpha})$ be an oriented orbifold chart. For each $\tilde{x} \in \tilde{U}_{\alpha}$, we denote by $\mathcal{P}^3(T_{\tilde{x}}\tilde{U}_{\alpha})$ the set of positive $3$-forms on $T_{\tilde{x}}\tilde{U}_{\alpha}$. We denote by $\mathcal{P}^3(T\tilde{U}_{\alpha}) = \bigcup_{\tilde{x}} \mathcal{P}^3(T_{\tilde{x}}\tilde{U}_{\alpha})$. 

\begin{dfn}
Let $X$ be an oriented $7$-dimensional orbifold. 
\begin{itemize}[leftmargin=*]
\item An almost $G_2$-structure over $X$ is a $3$-form $\phi \in C^{\infty}(X, \Lambda^3T^*X)$ whose restriction to each chart $(\tilde{U}_{\alpha}, G_{\alpha})$ is the $G_{\alpha}$-quotient of an equivariant positive $3$-form $\tilde{\phi}_{\alpha} \in C^{\infty}(\tilde{U}_{\alpha}, \mathcal{P}^3(T\tilde{U}_{\alpha}))$. 
\item Denote by $g_{\phi}$ the induced Riemannian metric from an almost $G_2$-structure $\phi$. We say $\phi$ is a (torsion-free) $G_2$-structure if it is $g_{\phi}$-parallel, i.e. $\nabla_{g_{\phi}} \phi = 0$. 
\end{itemize}
\end{dfn}

The condition $\nabla_{g_{\phi}} \phi = 0$ is equivalent to that the Levi-Civita connection $\nabla_{g_{\phi}}$ is torsion-free. The following result provides us with a useful criterion to justify whether an almost $G_2$-structure is parallel or not. 

\begin{lem}[{\cite[Proposition 10.1.3]{J00}}]
Let $\phi$ be an almost $G_2$-structure over an orbifold $X$. Then $\phi$ is $g_{\phi}$-parallel if and only if 
\[
d\phi = 0 \quad \text{ and } \quad d\star_{\phi} \phi = 0,
\]
where $\star_{\phi}$ is the Hodge star given by the induced metric $g_{\phi}$. 
\end{lem}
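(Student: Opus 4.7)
The plan is to reduce the claim to the known manifold version, namely Joyce's Proposition 10.1.3, by working chart by chart. All three conditions involved, namely $\nabla_{g_\phi}\phi = 0$, $d\phi = 0$, and $d\star_\phi \phi = 0$, are local in nature: each is defined pointwise (the first) or via differential operators with local expressions (the latter two). So it suffices to verify the equivalence on the preimages under the orbifold charts $(\tilde{U}_\alpha, G_\alpha, \varphi_\alpha)$ that cover $X$.

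On a fixed chart, the almost $G_2$-structure $\phi$ lifts to a $G_\alpha$-equivariant positive $3$-form $\tilde\phi_\alpha$ on the smooth manifold $\tilde U_\alpha$. Because $G_\alpha$ acts by orientation-preserving diffeomorphisms, the associated metric $g_{\tilde\phi_\alpha}$, volume form $\vol_{\tilde\phi_\alpha}$, Hodge star $\star_{\tilde\phi_\alpha}$, and Levi--Civita connection $\nabla_{g_{\tilde\phi_\alpha}}$ are all $G_\alpha$-equivariant, and they are precisely the lifts of the corresponding orbifold structures on $U_\alpha = \varphi_\alpha(\tilde U_\alpha)$. Since the exterior derivative commutes with pullback, and since an equivariant form on $\tilde U_\alpha$ vanishes if and only if its quotient on $U_\alpha$ vanishes, the equivalences
\[
d\phi = 0 \text{ on } U_\alpha \;\Longleftrightarrow\; d\tilde\phi_\alpha = 0 \text{ on } \tilde U_\alpha,
\]
\[
d\star_\phi\phi = 0 \text{ on } U_\alpha \;\Longleftrightarrow\; d\star_{\tilde\phi_\alpha}\tilde\phi_\alpha = 0 \text{ on } \tilde U_\alpha,
\]
\[
\nabla_{g_\phi}\phi = 0 \text{ on } U_\alpha \;\Longleftrightarrow\; \nabla_{g_{\tilde\phi_\alpha}}\tilde\phi_\alpha = 0 \text{ on } \tilde U_\alpha
\]
all hold.

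Now on the smooth manifold $\tilde U_\alpha$ the result is exactly Joyce's Proposition 10.1.3: the positive $3$-form $\tilde\phi_\alpha$ is parallel with respect to $\nabla_{g_{\tilde\phi_\alpha}}$ if and only if $d\tilde\phi_\alpha = 0$ and $d\star_{\tilde\phi_\alpha}\tilde\phi_\alpha = 0$. Combining this with the three chart-by-chart equivalences above and ranging over an orbifold atlas of $X$ yields the claim. The only thing to check is the compatibility of the lifts across different charts, but this is automatic from the definition of an orbifold atlas, since the embeddings $\lambda_{\beta\alpha}$ are smooth and $G_\alpha$-equivariant, hence intertwine the Levi--Civita connections, Hodge stars, and exterior derivatives on $\tilde U_\alpha$ and $\tilde U_\beta$.

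The potential difficulty here is essentially bookkeeping rather than conceptual: one must be comfortable that all tensorial and differential operations on an orbifold are correctly defined via equivariant lifts to charts, and that each operation commutes with the finite group action. Once that framework (already spelled out in the preceding subsection) is in place, the proof is a direct appeal to the manifold case; no new analytic input beyond Joyce's result is needed.
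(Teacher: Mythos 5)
Your proposal is correct. The paper itself offers no proof of this lemma --- it simply cites Joyce's Proposition 10.1.3 --- so the implicit burden is exactly the one you discharge: checking that the manifold statement passes to orbifolds. Your chart-by-chart reduction is the right argument, and it works because both directions of Joyce's equivalence are local (the intrinsic torsion of a $G_2$-structure is a pointwise tensorial quantity determined by $d\phi$ and $d\star_\phi\phi$), so the equivalence on each $\tilde U_\alpha$ descends through the $G_\alpha$-quotient and patches over the atlas precisely as you describe. This matches the framework the paper sets up in Section 2.2, where all tensorial and differential operations on orbifolds are defined via equivariant lifts to charts.
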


Let $(\tilde{U}_{\alpha}, G_{\alpha}, \varphi_{\alpha})$ be an orbifold chart of $X$, and $U_{\alpha} = \varphi_{\alpha}(\tilde{U}_{\alpha}) \subset X$. In the definition, the restriction of a $G_2$-structure $\phi|_{U_{\alpha}}$ is given by the quotient of a $G_{\alpha}$-equivariant positive $3$-form $\tilde{\phi}_{\alpha}$. Since $G_{\alpha}$ fixes the origin in $\tilde{U}_{\alpha}$, it preserves the positive $3$-form $\tilde{\phi}_{\alpha}|_0$. Thus, up to $GL^+_{\R}(7)$-conjugation, $G_{\alpha}$ can be identified with a discrete subgroup of $G_2$. The conjugacy classes of finite subgroups of $G_2$ have been classified in \cite{CW83, G95}. We shall not dive into the details here. Roughly speaking, there are seven finite subgroups in $G_2$ that act irreducibly on $\R^7$ through the fundamental representation of $G_2$. All other finite subgroups of $G_2$ lie in either $SU(2) \times SU(2)$ or $SU(3)$, up to conjugation. So the isotropy group $G_x$ of a point $x \in X$ has to be one of those. 

Typical examples of compact $G_2$-orbifolds arise as the global quotient of a compact $G_2$-manifold under a finite group action that preserves the $G_2$-structure. We briefly review some constructions here. For more details, see Joyce's book \cite{J00}. 

\begin{exm}
Let $M$ be a compact $7$-manifold equipped with a $G_2$-structure $\phi$. \cite[Theorem 10.2.1]{J00} tells us that the connected component of the holonomy group $\Hol^0(g_{\phi})$ is one of $\{1\}$, $SU(2)$, $SU(3)$, or $G_2$. 
\begin{enumerate}[leftmargin=*,label=(\alph*)]
\item When $\Hol^0(g_{\phi}) = \{1\}$, we consider the flat $7$-torus $T^7 = \R^7/\Z^7$. The $3$-form $\phi_0$ (\ref{e2.1}) over $\R^7$ is invariant under the $\Z^7$-translation. Thus it descends to a torsion-free $G_2$-structure $\phi_0$ over $T^7$. Let $\Gamma \subset G_2$ be a finite subgroup. Since $\Gamma$ preserves the $3$-form $\phi_0$. The quotient $T^7/\Gamma$ is a $G_2$-orbifold. 

\item When $\Hol^0(g_{\phi}) = SU(2)$, we consider the product $M=T^3 \times K3$ between the flat $3$-torus and the $K3$ surface. The $K3$ surface admits a complex structure $J$ with compatible Kähler form $\omega$ and holomorphic volume form $\theta$. Then we have a $G_2$-structure on $M$ (c.f. \cite[Proposition 11.1.1]{J00})
\begin{equation}
\phi = dx_{123} + dx_1 \wedge \omega + dx_2 \wedge \Rea \theta - dx_3 \wedge \Ima \theta.
\end{equation}
Then one can take a finite group $\Gamma$ of diffeomorphisms that preserve $\phi$. The global quotient $M/\Gamma$ is a $G_2$-orbifold. For instance, $\Z/2$ acts on $T^3 \times K3$ via the the map $(x_1, x_2, x_3) \mapsto (x_1, -x_2, -x_3)$ on $T^3$ and complex conjugation on $K3$. 

\item When $\Hol^0(g_{\phi}) = SU(3)$, we consider the product $M= S^1 \times Y$ between the circle and a Calabi--Yau $3$-fold $Y$. Let $\omega$ be the Kähler form and $\theta$ be the holomorphic volume form over $Y$. Then a $G_2$-structure on $M$ takes the form (c.f. \cite[Proposition 11.1.2]{J00})
\begin{equation}
\phi = dx \wedge \omega + \Rea \theta. 
\end{equation}
Then $\Z/2$ acts on $S^1 \times Y$ via the map obtained by the antipodal map on $S^1$ and complex conjugation on $Y$. 
\end{enumerate}
\end{exm}

Motivated by these examples, we shall consider $G_2$-orbifolds with certain explicit local models near its singular set.

\begin{dfn}\label{d2.7}
A compact $G_2$-orbifold $X$ is called simple if its singular set $S_X$ is locally modeled on one of the following types.
\begin{enumerate}
\item[(I)] $x \in S_X$ admits an orbifold chart $(\tilde{U}_x, \Z/2)$ equipped with the standard positive $3$-form $\phi_0$, where the generator $-1$ of $\Z/2 = \{1, -1\}$ acts on $\tilde{U}_x \subset \R^7$ by 
\[
(x_1, x_2, x_3, x_4, x_5, x_6, x_7) \longmapsto (x_1, x_2, x_3, -x_4, -x_5, -x_6, -x_7).
\] 
\item[(II)] $x \in S_X$ admits an orbifold chart $(\tilde{U}_x, \Z/2 \oplus \Z/2)$ equipped with the standard positive $3$-form $\phi_0$, where the generators $(1,-1)$ and $(-1,1)$ act on $\tilde{U}_x \subset \R^7$ by 
\[
\begin{split}
(1,-1) \cdot (x_1, x_2, x_3, x_4, x_5, x_6, x_7)  & = (x_1, x_2, x_3, -x_4, -x_5, -x_6, -x_7) \\
(-1,1) \cdot (x_1, x_2, x_3, x_4, x_5, x_6, x_7)  & = (x_1, -x_2, -x_3, x_4, x_5, -x_6, -x_7). 
\end{split}
\]
\end{enumerate}
\end{dfn}

The first local model (1) in \autoref{d2.7} describes the singular set locally as the quotient of an associative subspace in $\R^7$ whose neighborhood arise an $\R P^3$-cone bundle. The second local model (2) in \autoref{d2.7} describes that two pieces of the singular set modeled on (1) intersect each other into a curve. In Joyce's construction of compact $G_2$-manifolds, $G_2$-orbifolds arise naturally as global quotients of the flat $7$-torus under a finite group action. For the concrete examples presented in his paper \cite{J96}, most of the relevant $G_2$-orbifolds are simple.

Now we briefly discuss the homotopy and (co)homology groups of orbifolds following the treatment in \cite{AJY07}. To each orbifold $X$, one can associate a classifying space $c:\mathcal{B}X \to X$ which has the homotopy type of a CW-complex \cite{S74}. This classifying space is unique up to weak homotopy equivalent. Then we define the homotopy groups and (co)homology groups of the orbifold $X$ respectively as 
\begin{equation}
\pi^{\orb}_n(X, x) := \pi_n(\mathcal{B}X, x_c) \quad  H^{\orb}_n(X; R) :=  H_n(\mathcal{B}X; R) \quad H^n_{\orb}(X;R):=H^n(\mathcal{B}X; R),
\end{equation}
where $x_c \in \mathcal{B}X$ satisfies $c(x_c) = x$, and $R$ is any commutative ring. It is proved \cite[Corollary 1.24]{AJY07} that every (classical) compact $n$-orbifold $X$ is the global quotient of a smooth manifold $M$ under a smooth, effective, and almost free action by a compact Lie group $G$. Indeed, one can take $M$ to be the total space of the frame bundle of $X$ and $G=O(n)$. In this case the classifying space $\mathcal{B}X$ arises as the Borel construction $EG \times_G M$, where $EG$ is the classifying bundle of the compact Lie group $G$. So the orbifold (co)homology of $X=M/G$ is the equivariant (co)homology of $(M, G)$. 

One can also consider the deRham cohomology over an orbifold whose chain complex consists of orbifold $n$-forms $\Omega^n(X)$, and whose differential is the exterior derivative on forms. We denote the deRham cohomology by $H^*_{\dR}(X)$ which can be identified with the (non-orbifold) singular cohomology $H^*(X; \R)$ of $X$ with $\R$-coefficient (c.f. \cite{S56}). Moreover, the orbifold cohomology $H^*_{\orb}(X;\R)$ is also isomorphic to usual singular cohomology $H^*(X; \R)$.

When $G$ is a finite group, the fibration $M \to EG \times_G M \to BG$ gives rise to a short exact sequence:
\begin{equation}
1 \longrightarrow \pi_1(M) \longrightarrow \pi^{\orb}_1(X) \longrightarrow G \longrightarrow 1. 
\end{equation}
By passing to the universal cover $\tilde{M}$ of $M$, the orbifold fundamental group $\pi_1^{\orb}(X)$ can be identified with the group of covering transformations on $\tilde{M}$ \cite[Theorem 2.18]{AJY07}. The orbifold fundamental group has a more elementary description which corresponds to the definition of the fundamental group of manifolds. We recall this description following the exposition in \cite{H90}. 

\begin{dfn}
Let $X$ be an orbifold equipped with an orbifold atlas $\{(\tilde{U}_{\alpha}, G_{\alpha}, \varphi_{\alpha})\}_{\alpha \in I}$. Let $x, y \in X$ be two fixed points.
\begin{itemize}[leftmargin=*]
\item An orbipath $\gamma$ from $x$ to $y$ consists of the following data.
\begin{enumerate}
\item A partition $0=t_0 \leq t_1 \leq ... \leq t_n=1$ of the unit interval $[0, 1]$. 
\item An index function $\tau: \{1, ..., n\} \to I$ and a family of continuous paths $\tilde{\gamma}_i: [t_{i-1}, t_i] \to \tilde{U}_{\tau(i)}$ such that 
\[
\varphi_{\tau(1)} (\tilde{\gamma}_1(0)) = x, \quad \varphi_{\tau(i)}(\tilde{\gamma}_i(t_{i-1}))= \varphi_{\tau(i-1)}(\tilde{\gamma}_{i-1}(t_{i-1})), \quad \varphi_{\tau(n)}(\tilde{\gamma}_n(1)) = y. 
\]
\end{enumerate}

\item Two orbipaths $\gamma$ and $\gamma'$ from $x$ to $y$ are equivalent if, after passing to subdivisions of $[0,1]$ and refinements to a common index function $\tau$, the local lifts are related by $\tilde{\gamma}_i = g_i \cdot \tilde{\gamma}'_i$ for some $g_i \in G_{\tau(i)}$. 

\item Two orbipaths $\gamma$ and $\gamma'$ are homotopic relative to ends if one can find a continuous family of orbipaths $(\gamma^s)_{s \in [0,1]}$ so that for all $s \in [0,1]$ one has 
\[
\gamma^s(0) = x, \quad \gamma^0 = \gamma, \quad, \gamma^1 = \gamma', \quad \gamma^s(1) = y. 
\]

\item The orbifold fundamental group $\pi^{\orb}_1(X, x)$ is defined to the set of orbipaths based at $x$, up to orbipath equivalence and homotopy relative to ends, which is equipped with the obvious group structure. 
\end{itemize}
\end{dfn}

One can heuristically think of an orbipath as a continuous path on $X$ with chosen local lifts, and the orbifold fundamental group as homotopy classes of such objects. The notion orbipath will be useful when we consider holonomy of connections over orbibundles. 

\subsection{\em Orbibundles and Connections} \label{ss2.3} \hfill
 
\vspace{3mm}

\begin{dfn}
Let $X$ be an $n$-orbifold equipped with an orbifold atlas $\{(\tilde{U}_{\alpha}, G_{\alpha}, \varphi_{\alpha})\}$. 
\begin{itemize}[leftmargin=*]
\item A rank-$r$ vector orbibundle $E$ over $X$ is a collection $\{\tilde{E}_{\alpha}\}$ of $G_{\alpha}$-equivariant rank-$r$ bundles $\tilde{E}_{\alpha} \to \tilde{U}_{\alpha}$ equipped with fiberwise isomorphic embeddings $\tilde{E}_{\alpha} \hookrightarrow \tilde{E}_{\beta}$ that are compatible with the embeddings $\tilde{U}_{\alpha} \hookrightarrow \tilde{U}_{\beta}$. 

\item Let $G$ be a compact Lie group. A principal $G$-orbibundle $P$ over $X$ is a collection $\{\tilde{P}_{\alpha}\}$ of $G_{\alpha}$-equivariant principal $G$-bundles $\pi_{\alpha}: \tilde{P}_{\alpha} \to \tilde{U}_{\alpha}$ (the left $G_{\alpha}$-action commutes with the right $G$-action on $\tilde{P}_{\alpha}$) equipped with fiberwise isomorphic embeddings $\tilde{P}_{\alpha} \hookrightarrow \tilde{P}_{\beta}$ that are compatible with the embeddings $\tilde{U}_{\alpha} \hookrightarrow \tilde{U}_{\beta}$. 
\end{itemize}
\end{dfn}

Let $P$ be a principal $G$-orbibundle over $X$. Same as the case of manifolds \cite{KN63}, a connection $A$ over $P$ is defined as a compatible collection of $G$-equivariant splittings of the $(G_{\alpha}, G)$-equivariant bundle sequence:
\begin{equation}\label{e2.13}
0 \longrightarrow \tilde{P}_{\alpha} \times \g \xrightarrow{\psi_{\alpha}} T\tilde{P}_{\alpha} \xrightarrow{\pi_{\alpha, *}} \pi_{\alpha}^*T\tilde{U}_{\alpha} \longrightarrow 0,
\end{equation}
where $\g$ is the Lie algebra of $G$, and $\psi_{\alpha}$ is the infinitesimal Lie algebra action that induces the isomorphism between $\g$ and the vertical fiber of $T\tilde{P}_{\alpha}$. A splitting of (\ref{e2.13}) can be realized either as a left inverse of $\psi_{\alpha}$ which gives us a $(G_{\alpha}, G)$-equivariant $\g$-valued $1$-form on $\tilde{P}_{\alpha}$ after composing the projection onto $\g$, or as a right inverse of $\pi_{\alpha, *}$ whose image is a $(G_{\alpha}, G)$-equivariant horizontal subspace of $T\tilde{P}_{\alpha}$. 

To a representation $\rho: G \to \Aut(V)$, we can associate a vector orbibundle $E = P \times_{\rho} V$. In particular, the adjoint representation $\ad: G \to \Aut(\g)$ gives us an adjoint bundle $\g_P:= P \times_{\ad} \g$. The space $\A_P$ of smooth connections over $P$ is then an affine space modeled on $\Omega^1(X, \g_P)$. The adjoint action $\Ad: G \to \Aut(G)$ results in another fiber bundle $G_P:= P \times_{\Ad} G$. Although locally the fiber $G_P$ is the Lie group $G$, it is not a principal $G$-bundle. The automorphism group of $P$ is then identified as $C^{\infty}(X, G_P)$ whose Lie algebra is $\Omega^0(X, \g_P)$. We refer to $\G_P:=C^{\infty}(X, G_P)$ as the gauge group of $P$, and an automorphism of $P$ as a gauge transformation. Then $\G_P$ acts on $\A_P$ via pull-backs:
\begin{equation}
u \cdot A :=uAu^{-1}= A - (d_Au)u^{-1}. 
\end{equation}

Given a $G$-connection $A$ on a principal $G$-orbibundle $P$, we can define the holonomy of $A$ around orbiloops in $X$ similarly as in the case of manifolds. When $A$ is flat, i.e. its curvature $F_A = 0$, the holonomy is invariant under homotopy of orbiloops. After fixing a basepoint $x \in X$ and a lift $p \in P$ of $x$, the holonomy of a flat connection defines a representation 
\begin{equation}
\Hol(A) : \pi_1^{\orb}(X, x) \longrightarrow G.  
\end{equation}
Different choices of $x$ and $p$ changes the representation by the adjoint action of an element in $G$. Conversely, given a representation $\rho \in \Hom(\pi_1^{\orb}(X), G)$, one can build a principal $G$-orbibundle $P_{\rho}$ with a connection $A_{\rho}$ that realizes $\rho$ via its holonomy (c.f. \cite{SY18}). So we consider the set $\mathfrak{F}(X, G)$ of pairs $(P, A)$ where $P$ is a principal $G$-orbibundle over $X$, and $A$ is a flat connection over $P$. Two pairs $(P, A)$ and $(P', A')$ are equivalent if there exists a bundle isomorphism $f: P \to P'$ so that $f^*A' = A$. Then one can prove the above procedure establishes a one-to-one correspondence between the equivalence classes of such pairs and conjugacy classes of $\pi_1^{\orb}(X)$-representations over $G$.

\begin{prop}[{\cite{SY18}}]
The holonomy map induces a one-to-one correspondence:
\[
\mathfrak{F}(X, G)/ \sim \quad  \longleftrightarrow \quad  \Hom(\pi^{\orb}_1(X), G)/\Ad. 
\]
\end{prop}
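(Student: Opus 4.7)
The plan is to prove the correspondence by following the classical flat-bundle-to-representation argument, while treating carefully the orbifold chart data. There are three logical pieces: (a) the holonomy map is well-defined on equivalence classes; (b) surjectivity, via a construction of a flat orbibundle from a representation; and (c) injectivity, by building a global gauge isomorphism out of local trivializations.

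First I would make sense of holonomy for orbipaths. Fix a basepoint $x \in X$ and a point $p \in \tilde{P}_\alpha$ in the fiber over (a lift of) $x$. Given an orbipath $\gamma$ with partition $0 = t_0 < \cdots < t_n = 1$, index function $\tau$, and local lifts $\tilde{\gamma}_i : [t_{i-1}, t_i] \to \tilde{U}_{\tau(i)}$, parallel transport along each $\tilde{\gamma}_i$ using the flat connection on $\tilde{P}_{\tau(i)}$ is classically defined; at the junction point $t_i$ the two local lifts $\tilde{\gamma}_i(t_i)$ and $\tilde{\gamma}_{i+1}(t_i)$ represent the same point of $X$, and flatness of $A$ together with the compatibility maps between charts gives a canonical identification of the fibers up to the action of $G_{\tau(i)}$ on $\tilde{P}_{\tau(i)}$, which is precisely the ambiguity that orbipath equivalence identifies. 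Flatness of $A$ guarantees that the resulting element of $G$ is invariant under homotopy relative to endpoints, yielding a homomorphism $\Hol_{A, p} : \pi_1^{\orb}(X, x) \to G$. Changing the lift $p$ to $p \cdot g$ conjugates the representation by $g^{-1}$, and a bundle isomorphism $f : P \to P'$ with $f^*A' = A$ sends $p$ to $f(p)$ and intertwines the holonomies, so the map $\mathfrak{F}(X, G)/\!\sim\, \to \Hom(\pi_1^{\orb}(X), G)/\Ad$ is well-defined.

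For surjectivity, given $\rho : \pi_1^{\orb}(X, x) \to G$, I would construct a flat orbibundle $P_\rho$ chart-by-chart. For each chart $(\tilde{U}_\alpha, G_\alpha, \varphi_\alpha)$ pick a preimage $\tilde{x}_\alpha \in \tilde{U}_\alpha$ and an orbipath $\eta_\alpha$ from $x$ to $\varphi_\alpha(\tilde{x}_\alpha)$; conjugation by $\eta_\alpha$ together with the loops induced by $G_{\tilde{x}_\alpha}$ acting in the chart produces a homomorphism $G_\alpha \to G$ from $\rho$, well-defined up to $G$-conjugation. Set $\tilde{P}_\alpha := \tilde{U}_\alpha \times G$ with the induced $G_\alpha$-equivariant structure and the trivial (hence flat) product connection. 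Transition data on overlaps is given by $\rho$ evaluated on orbipaths between the chosen $\tilde{x}_\alpha$'s, and the cocycle condition follows from $\rho$ being a homomorphism. One must check that this gluing is compatible with the $G_\alpha$-equivariance on each chart; this reduces to the fact that loops around isotropy subgroups act on fibers exactly by the chosen images in $G$. The resulting $(P_\rho, A_\rho)$ has holonomy $\rho$ by construction.

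For injectivity, suppose $(P, A)$ and $(P', A')$ have holonomies that agree after conjugation by some $g \in G$; after modifying $p'$ we may assume $\Hol_{A,p} = \Hol_{A', p'}$. For each chart, parallel transport along the chosen orbipaths $\eta_\alpha$ and then radially inside $\tilde{U}_\alpha$ (contractible) produces $G_\alpha$-equivariant trivializations of $\tilde{P}_\alpha$ and $\tilde{P}'_\alpha$; these yield bundle isomorphisms $f_\alpha : \tilde{P}_\alpha \to \tilde{P}'_\alpha$ with $f_\alpha^* A' = A$. Equality of holonomies on orbipaths from $x$ to points in overlapping charts forces the $f_\alpha$ to agree on overlaps, so they assemble into a global orbibundle isomorphism $f : P \to P'$.

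The main technical obstacle, as expected, is the surjectivity construction: ensuring that the homomorphisms $G_\alpha \to G$ patch consistently across overlaps and across the embeddings $\lambda_{\beta\alpha}$ between charts, and that the resulting cocycle truly defines an orbibundle in the sense of \autoref{ss2.3}. This bookkeeping is cleanest when one works on the classifying space $\mathcal{B}X$, where $\pi_1^{\orb}(X) = \pi_1(\mathcal{B}X)$ and the classical construction of flat bundles from representations applies verbatim; transferring this bundle back to $X$ via the classifying map $c : \mathcal{B}X \to X$, or equivalently working on the frame bundle presentation $X = M / O(n)$ and performing the classical construction on $M$ equivariantly, avoids most of the chart-level bookkeeping.
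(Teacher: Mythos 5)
The paper does not actually prove this proposition: it is quoted from \cite{SY18}, and the text immediately after it only remarks that the manifold case is in \cite[Chapter 13]{T11}. So there is no in-paper argument to compare yours against; what can be judged is whether your outline is a correct rendering of the standard proof, and it essentially is. Well-definedness and injectivity are handled correctly (flatness gives homotopy invariance of parallel transport along orbipaths, the chart-jump ambiguity at junctions is exactly what orbipath equivalence quotients out, and path-transport trivializations glue once the holonomies are normalized to agree). The one place your chart-by-chart surjectivity argument is genuinely glossed is the claim that ``the cocycle condition follows from $\rho$ being a homomorphism'': the transition functions are $\rho$ evaluated on concatenations of the chosen reference orbipaths $\eta_\alpha$, and on a triple overlap the composite $\eta_\beta^{-1}\eta_\gamma\cdot\eta_\gamma^{-1}\eta_\alpha$ differs from $\eta_\beta^{-1}\eta_\alpha$ by a loop, so consistency of the cocycle and of the local homomorphisms $G_\alpha \to G$ is not automatic without a coherent global choice. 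Your own suggested remedy is the right one, and is the cleanest way to write the construction: pass to the universal orbifold cover $\tilde{X} \to X$ (equivalently, use the presentation of $\pi_1^{\orb}(X)$ as the deck group of $\tilde{M}$ for the frame-bundle quotient $X = M/O(n)$ recalled in \autoref{ss2.2}) and set $P_\rho := \tilde{X} \times_\rho G$ with the connection descended from the product connection; this produces the flat orbibundle with holonomy $\rho$ in one step and eliminates the chart-level bookkeeping. With that substitution your argument is complete and is, as far as one can tell, the same argument the cited references carry out.
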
 

Such a correspondence is well-known in the manifold case. One may consult \cite[Chapter 13]{T11} for a complete proof. 

Suppose now that $G$ is a Lie subgroup of $O(n)$ for some positive integer $n$. Then the representation $\rho: G \to \Aut(\R^n)$ associates a rank-$n$ bundle $E:=P \times_{\rho} X$ to a principal $G$-orbibundle $P \to X$. Regarding the total space of $E$ as an orbifold, its classifying space $\mathcal{B}E$ is naturally an $\R^n$-bundle over $\mathcal{B}X$ (c.f. \cite{AJY07}). Then we define the Steifel--Whitney classses and Pontryagin classes of $P$ as the characteristic classes associated to the bundle $\mathcal{B}E \to \mathcal{B}X$:
\begin{equation}
w_i(P) \in H^i_{\orb}(X; \Z/2) \quad \text{ and } \quad p_i(P) \in H^{4i}_{\orb}(X; \Z). 
\end{equation}
If $G$ is a Lie subgroup of $U(n)$, Chern classes are defined similar for principal $G$-orbibundles. The Chern--Weil theory that relates characteristic forms to the curvature form of connections on a principal $G$-orbibundle are also developed. For a thorough treatment, one may consult \cite{LTX07}. When the orbifold $X$ is given by a global quotient $M/G$, and $P$ is an equivariant $G$-bundle, the book by Berline--Getzler--Vergne \cite[Chapter 7]{BGV92} contains an excellent exposition on the equivariant Chern--Weil theory. For instance, under the identification $H^*_{\dR}(X) \simeq H^*_{\orb}(X; \R)$ the real Pontryagin classes of $P$ are given by the same combination of the symmetric polynomials of the curvature form $F_A$ for a $G$-connection $A$ over $P$ as in the case of principal $G$-bundles over manifolds.

\section{\large \bf Gauge Theory over $G_2$-Orbifolds}\label{s3}

In this section, we set up the framework of gauge theory over $G_2$-orbifolds following Donaldson--Segal \cite{DS11}. For a more comprehensive treatment, one may consult the thesis of Sá Earp \cite{SE09}. 

To start, we pin-down the convention of orbifolds and bundles. Let $(X, \phi)$ be a compact orbifold equipped with a torsion-free $G_2$-structure, and $\pi: P^{\dagger} \to X$ a principal $U(r)$-bundle.  From $P^{\dagger}$, we obtain a line bundle $\det P^{\dagger}$ via the determinant homomorphism $\det: U(r) \to U(1)$ and a $PU(r)$-bundle $P$ via the projective homomorphism $\pr: U(r) \to PU(r)$. A general connection of $P^{\dagger}$ is denoted by $A^{\dagger}$, which induces a connection $\theta_A$ on $\det P^{\dagger}$ and a connection $A$ on $P$. 

\subsection{$G_2$-Instantons} \hfill

\vspace{3mm}

We fix a smooth connection $A^{\dagger}_0$ on $P^{\dagger}$ as a reference connection. We also fix an integer $k \geq 4$ to ensure the Soboleve embedding $L^2_k \hookrightarrow C^0$. Then we get an induced connection $A$ on the $PU(r)$-bundle $P$. Denote by $\A_k$ the space of all $PU(r)$-connections over $P$ in class $L^2_k$. Explicitly $A \in \A_k$ if and only if 
\[
\|A - A_0\|_{L^2_{k, A_0}(X)} := \sum_{i=0}^k \left(\int_X  |\nabla^i_{A_0}(A - A_0)|^2 d\vol \right)^{\frac{1}{2}} < \infty,
\]
where $\nabla^i_{A_0}$ means to take the $i$-th derivative with respect to $A_0$. Then $\A_k$ is an affine space modeled on $L^2_k(X, \Lambda^1 \otimes \g_P)$, where $\Lambda^*$ denotes $\Lambda^* T^*X$. 

\begin{rem}
We always identify $\mathfrak{pu}(r)$ with the Lie algebra $\mathfrak{su}(r)$ via the isomorphism $\ad: \su(r) \to \mathfrak{pu}(r)$ given by differentiating the adjoint map. The inner product on $\mathfrak{su}(r)$ is defined via the Killing form:
\[
\langle \alpha, \beta \rangle:= -\tr(\ad(\alpha) \circ \ad(\beta)) = -2r \tr(\alpha\beta), \quad \forall \alpha, \beta \in \su(r). 
\]
\end{rem}

The gauge group $\G_{k+1}$ is defined as follows. Note that the adjoint action of $U(r)$ on $SU(r)$ factors through $PU(r)$. So we can consider bundles 
\[
G'_P:=P \times_{\Ad} SU(r) \qquad G_P:= P \times_{\Ad} PU(r).
\]
Sections of the second bundle $G_P$ are automorphisms of $P$. The projective homomorphism $\pr: SU(r) \to PU(r)$ induces a map $\pr_P: G'_P \to G_P$. We define the gauge group to be 
\[
\G_{k+1}:= L^2_{k+1}(G'_P)
\]
which acts on the bundle $P$ by pre-composing with $\pr_P$. Elements in $\G_{k+1}$ are commonly referred to as `determinant-1' of $SU(r)$ gauge transformations. Sections of $G'_P$ act on the space of connections via pull-backs as usual:
\begin{equation}
u \cdot A := A - (d_A u )u^{-1}, \quad u \in \G_{k+1}, \; A \in \A_k. 
\end{equation}
Over $7$-orbifolds, the Sobolev multiplication theorem $L^2_{k+1} \times L^2_k \hookrightarrow L^2_k$ holds when $k \geq 3$. Thus gauge transformations in $\G_{k+1}$ preserves $\A_k$. The Lie algebra of $\G_{k+1}$ is identified with $L^2_{k+1}(X, \g_P)$.

Alternatively, one can work with $U(r)$-connections over $P^{\dagger}$ with an extra constraint. Denote by $\theta_0=\det A^{\dagger}_0$ the induced connection on the determinant line $\det P^{\dagger}$. Then the space $\A_k$ of connections can be identified with the space of $L^2_k$ $U(r)$-connections $A^{\dagger}$ on $P^{\dagger}$ such that $\det A^{\dagger} = \theta_0$. Gauge transformations on $P^{\dagger}$ that preserve $\theta_0$ have trivial determinants, thus form the aforementioned group $\G_{k+1}$.

Denote by $\psi:= \star_{\phi} \phi$ the $4$-form associated to the positive $3$-form $\phi$. The $G_2$-variant of the Chern-Simons functional is defined as:
\begin{equation}
\cs_{\phi}(A_0 + a):= -2r \int_X \left( \frac{1}{2} \tr( a \wedge F_{A_0+a})+ \frac{1}{3} \tr( a \wedge a \wedge a) \right) \wedge \psi.
\end{equation}
It's clear that the Chern--Simons functional defines an analytic functional over the space of connections $\cs_{\phi}: \A_k \to \R$. 

\begin{lem}\label{l3.1}
The gradient of the Chern--Simons functional is given by
\[
\grad \cs_{\phi}(A) = \star_{\phi} (F_{A} \wedge \psi). 
\]
\end{lem}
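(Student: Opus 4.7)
The plan is to compute the first variation $\tfrac{d}{dt}\big|_{t=0}\cs_\phi(A+tb)$ at a connection $A = A_0+a$ in an arbitrary direction $b \in \Omega^1(X,\g_P)$, and to match it with the $L^2$-pairing $\langle \grad\cs_\phi(A),b\rangle$ determined by the Killing-form normalization $\langle\alpha,\beta\rangle = -2r\tr(\alpha\beta)$ recalled in the remark preceding the lemma. Since $\cs_\phi$ is built from a polynomial expression in $a$ with coefficients in the algebra of matrix-valued differential forms, the computation is a standard variational calculation, with two nontrivial inputs: the torsion-free condition on $\phi$ and the trace-commutativity identity $\tr(\alpha\wedge\beta) = (-1)^{|\alpha||\beta|}\tr(\beta\wedge\alpha)$ for matrix-valued forms.

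First I would differentiate the integrand term by term, using $\tfrac{d}{dt}\big|_{t=0}F_{A+tb} = d_A b$, together with the graded trace-swap identity. This will produce terms involving $\tr(b\wedge F_A)$, $\tr(a\wedge d_A b)$, and $\tr(a\wedge a\wedge b)$; these will have to be collected carefully using cyclicity for three $1$-forms, e.g.\ $\tr(b\wedge a\wedge a) = \tr(a\wedge a\wedge b)$, and using $[a,a] = 2a\wedge a$ to pass between $d_A$ and $d_{A_0}$.

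Next I would invoke the torsion-free hypothesis $d\psi = 0$ to integrate the awkward $\tr(a\wedge d_Ab)\wedge\psi$ term by parts. Concretely, from the graded Leibniz rule $d\tr(a\wedge b) = \tr(d_{A_0}a\wedge b) - \tr(a\wedge d_{A_0}b)$ and $d\psi=0$ one gets
\[
d\bigl(\tr(a\wedge b)\wedge\psi\bigr) = \tr(d_{A_0}a\wedge b)\wedge\psi - \tr(a\wedge d_{A_0}b)\wedge\psi,
\]
and Stokes' theorem on a compact orbifold (which holds via the partition-of-unity definition of the orbifold integral recalled in \autoref{ss2.2}) kills the left-hand side. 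Substituting back, together with the Bianchi-type expansion $F_A = F_{A_0} + d_{A_0}a + a\wedge a$, the variation should telescope to the single expression $-2r\int_X \tr(F_A\wedge b)\wedge\psi$.

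Finally, since $\psi$ is a scalar $4$-form and $\tr$ is $\Ad$-invariant, one may freely reorder $\tr(F_A\wedge b)\wedge\psi$ to $\tr(b\wedge F_A\wedge\psi)$. As $F_A\wedge\psi \in \Omega^6(X,\g_P)$ and $\star_\phi\star_\phi = \id$ on $6$-forms in seven-dimensional Riemannian signature, the integrand rearranges as the pointwise Killing-form pairing $\langle b, \star_\phi(F_A\wedge\psi)\rangle\,\vol_\phi$. This identifies $\star_\phi(F_A\wedge\psi)$ as the $L^2$-gradient, yielding the claimed formula. The main obstacle is the sign bookkeeping in Step~1 and the careful integration by parts on an orbifold; both are manageable, the latter because the singular set has codimension at least two so the usual divergence theorem applies without boundary contributions.
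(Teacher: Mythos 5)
Your proposal is correct and follows essentially the same route as the paper's proof: compute the first variation term by term, use cyclicity of the trace, integrate by parts against $\psi$ (which is where $d\psi=0$ from the torsion-free condition enters), and identify the resulting pairing $-2r\int_X \tr(\alpha\wedge F_A)\wedge\psi$ with the $L^2$-inner product against $\star_\phi(F_A\wedge\psi)$ via the Killing-form normalization. The extra care you take with orbifold Stokes and the sign bookkeeping is fine but does not change the argument.
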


\begin{proof}
We identify the tangent space $T_{A}\A_k$ at $A=A_0+a$ by $L^2_k(X, \Lambda^1 \otimes \g_P)$. Then the differential can be computed as 
\[
\begin{split}
\frac{1}{2r} D \cs_{\phi}|_{A} (\alpha) = & - \int_X \frac{1}{2} \left(\tr(a \wedge d_{A_0} \alpha) + \tr(\alpha \wedge d_{A_0}a )\right) \wedge \psi \\
& - \int_X  \frac{1}{3} \left(\tr(\alpha \wedge a \wedge a) + \tr(a \wedge \alpha \wedge a) + \tr(a \wedge a \wedge \alpha) \right) \wedge \psi. 
\end{split}
\]
Integration by parts gives us 
\[
\int_X \tr(a \wedge d_{A_0} \alpha) \wedge \psi= \int_X \tr(\alpha \wedge d_{A_0} a) \wedge \psi. 
\]
The cyclic-permutation invariance of the trace operator tells us that 
\[
\begin{split}
D \cs_{\phi}|_{A} (\alpha) & = - 2r \int_X \tr(\alpha \wedge (d_{A_0} a + a\wedge a))  \wedge \psi \\
& = - 2r \int_X \tr(\alpha \wedge F_{A} \wedge \psi) \\
& = \langle \alpha, \star_{\phi} (F_{A} \wedge \psi) \rangle_{L^2}.
\end{split}
\]
This completes the proof. 
\end{proof}

\begin{dfn}
Let $P^{\dagger} \to X$ be a principal $U(r)$-orbibundle over a $G_2$-orbifold $(X, \phi)$. A unitary connection $A^{\dagger}$ over $P^{\dagger}$ is a projective $G_2$-instanton if 
\[
F_{A} \wedge \psi = 0,
\]
where $A$ is the induced connection of $A^{\dagger}$ on the $PU(r)$-bundle $P$. 
\end{dfn}


As we work with connections on the $PU(r)$-bundle $P$ in most times, we shall simply refer to $A$ as a $G_2$-instanton if $F_A \wedge \psi = 0$. \autoref{l3.1} tells us that a $G_2$-instanton is a critical point of the Chern--Simons functional $\cs_{\phi}$. This perspective of $G_2$-instantons arises as a reflection on the scenario in dimension 3 where flat connections are critical points of the Chern--Simons functional. On the other hand, $G_2$-instantons also bear similarities with ASD instantons in dimension $4$. Recall in (\ref{e2.4}) that the $G_2$-action on $\Lambda^2(\R^7)^*$ decomposes into two $G_2$-representations $\Lambda^2_7 \oplus \Lambda^2_{14}$. Since the $3$-form $\phi$ is parallel, such a decomposition carries through to the bundle of $2$-forms over the $G_2$-orbifold $X$ immediately:
\[
\Lambda^2T^*X = \Lambda^2_7 \oplus \Lambda^2_{14}. 
\]
Denote by $\pi_7: \Lambda^2 \to \Lambda^2_7$ the orthogonal projection. For simplicity, we shall write $\star$ for $\star_{\phi}$ when the context is clear. 

\begin{lem}\label{l3.3}
Let $P \to X$ be a principal $U(r)$-orbibundle over a $G_2$-orbifold $(X, \phi)$. Then the following are equivalent.
\begin{enumerate}[label=(\alph*)]
\item $A$ is a $G_2$-instanton over $P$.
\item $\pi_7(F_{A}) = 0$.
\item $\star_{\phi}(F_{A} \wedge \phi) = -F_{A}$. 
\end{enumerate}
\end{lem}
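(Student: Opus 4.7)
The plan is that all three conditions are pointwise algebraic statements about $F_A \in \Lambda^2 T^*X \otimes \g_P$, so the equivalences reduce to a purely linear-algebraic statement about a $2$-form $\omega \in \Lambda^2 V^*$ on the standard model $(V, \phi_0) = (\R^7, \phi_0)$, with $\psi_0 := \star \phi_0$. I would prove (b)$\Leftrightarrow$(c) first because it is immediate from the eigenvalue characterization \eqref{e2.5}, and then use a $G_2$-representation argument to get (a)$\Leftrightarrow$(b).

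For (b)$\Leftrightarrow$(c), decompose $\omega = \pi_7(\omega) + \pi_{14}(\omega)$ via \eqref{e2.4}. By \eqref{e2.5}, the $G_2$-equivariant operator $P := \star(\phi_0 \wedge \cdot)$ acts as multiplication by $2$ on $\Lambda^2_7$ and by $-1$ on $\Lambda^2_{14}$. Since $\phi_0 \wedge \omega = \omega \wedge \phi_0$, this gives
\[
\star(\omega \wedge \phi_0) = 2\pi_7(\omega) - \pi_{14}(\omega),
\]
so condition (c), namely $\star(\omega \wedge \phi_0) = -\omega$, is equivalent to $2\pi_7(\omega) - \pi_{14}(\omega) = -\pi_7(\omega) - \pi_{14}(\omega)$, i.e.\ $3\pi_7(\omega) = 0$, which is (b).

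For (a)$\Leftrightarrow$(b), I would study the linear map $L: \Lambda^2 V^* \to \Lambda^6 V^*$ given by $\omega \mapsto \omega \wedge \psi_0$. Since $\psi_0$ is $G_2$-invariant, $L$ is $G_2$-equivariant. Via Hodge duality $\Lambda^6 V^* \cong V^*$ is the $7$-dimensional fundamental representation, while by the discussion following \eqref{e2.4}, $\Lambda^2 V^* = \Lambda^2_7 \oplus \Lambda^2_{14}$ decomposes as the fundamental plus the $14$-dimensional adjoint representation, which are inequivalent. Schur's lemma therefore forces $L|_{\Lambda^2_{14}} = 0$ and constrains $L|_{\Lambda^2_7}$ to be either zero or an isomorphism. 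To rule out the zero case it suffices to exhibit one witness: for $v = e_1 \in V$, the form $\iota_v \phi_0 = dx_{23} + dx_{45} + dx_{67}$ lies in $\Lambda^2_7$ by a direct test of \eqref{e2.5}, and a short coordinate computation from \eqref{e2.1} yields $\iota_v\phi_0 \wedge \psi_0 = 3\, dx_{234567} = 3 \star v^\flat \neq 0$. Hence $L|_{\Lambda^2_7}$ is an isomorphism, so $\omega \wedge \psi_0 = 0$ if and only if $\pi_7(\omega) = 0$, establishing (a)$\Leftrightarrow$(b).

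The main obstacle is the non-vanishing check inside (a)$\Leftrightarrow$(b): without the representation-theoretic shortcut one would have to assemble explicit bases for $\Lambda^2_7$ and $\Lambda^2_{14}$ and verify by coordinate computation that wedging with $\psi_0$ annihilates the adjoint summand and is injective on the fundamental summand, which is elementary but lengthy. Everything else is unpacking \eqref{e2.4}--\eqref{e2.6}.
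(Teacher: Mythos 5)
Your proof is correct, and your treatment of (b)$\Leftrightarrow$(c) coincides with the paper's (both read it off from the eigenvalue description \eqref{e2.5}). For the remaining equivalence you take a genuinely different route: the paper proves (a)$\Leftrightarrow$(c) by quoting the identity $\star(\psi\wedge\star(\psi\wedge F_A)) = F_A + \star(F_A\wedge\phi)$ from Salamon--Walpuski and observing that the left-hand side is $T^*T F_A$ for $T=\psi\wedge(-)$, so it vanishes exactly when $\psi\wedge F_A$ does; you instead prove (a)$\Leftrightarrow$(b) by applying Schur's lemma to the $G_2$-equivariant map $L\colon \Lambda^2V^*\to\Lambda^6V^*\cong V^*$, killing $L|_{\Lambda^2_{14}}$ because $\g_2$ and the fundamental representation are inequivalent, and pinning down $L|_{\Lambda^2_7}$ as an isomorphism via the single witness $\iota_{e_1}\phi_0\wedge\psi_0 = 3\,dx_{234567}\neq 0$ (I verified this computation and the membership $\iota_{e_1}\phi_0\in\Lambda^2_7$). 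The paper's argument is shorter but outsources the key identity to a reference; yours is self-contained given the decomposition \eqref{e2.4}--\eqref{e2.5} and the stated identification of $\Lambda^2_7$ and $\Lambda^2_{14}$ with the fundamental and adjoint representations, and it makes transparent that $\Lambda^2_{14}$ is precisely the kernel of wedging with $\psi$, a fact the adjoint trick also yields but less visibly. Your opening remark that everything is pointwise and applies to the $\g_P$-valued form $F_A$ componentwise is the right way to reduce to the linear model, so no gap there.
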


\begin{proof}
The equivalence between (b) and (c) follows from (\ref{e2.5}). For the equivalence between (a) and (c), we note the following identity (c.f. \cite[Equation (4.60)]{SW17})
\[
\star(\psi \wedge \star(\psi \wedge F_{A})) = F_{A} + \star(F_{A} \wedge \phi). 
\]
Since the adjoint of $\psi \wedge -$ is $\star (\psi \wedge (\star -))$, we conclude that $F_{A} + \star(F_{A} \wedge \phi) = 0$ if and only if $\psi \wedge F_{A} = 0$. This completes the proof. 
\end{proof}

The second criterion (b) in \autoref{l3.3} takes $G_2$-instantons to the analogous setting of ASD instantons in dimension four. Recall $SO(4)$ admits two irreducible $3$-dimensional representations, which decomposes the bundle of $2$-forms over a $4$-manifold into two parts $\Lambda^2 = \Lambda^+ \oplus \Lambda^-$ consisting of the self-dual and anti-self-dual $2$-forms respectively. Then a connection $A$ is ASD if its self-dual part vanishes, i.e. $\pi_+(F_{A}) = 0$. The analogy with ASD instantons can be brought further by considering the Yang--Mills functional
\begin{equation}
\begin{split}
\mathcal{YM}: \A_k & \longrightarrow \R \\
A & \longmapsto \int_X |F_{A}|^2 d\vol. 
\end{split}
\end{equation}
When $A$ is a $G_2$-instanton, (\ref{e2.6}) and  (\ref{e2.7}) imply that
\begin{equation}\label{e3.3}
\mathcal{YM}(A) = \|\pi_{14}(F_{A})\|^2_{L^2} = -\int_X 2r \tr(F_{A} \wedge F_{A}) \wedge \phi =  8\pi^2 \langle p_1(\g_P) \smile [\phi], [X] \rangle, 
\end{equation}
where $p_1(\g_P) \in H^4_{\orb}(X; \R)$ is the real Pontryagin class of the orbibundle $\g_P$. Since the RHS of (\ref{e3.3}) is independent of the choice of the connection $A$, $G_2$-instantons are the absolute minima of the Yang--Mills functional. This property also characterizes ASD instantons in dimension four. 

\begin{dfn}
The moduli space of projective $G_2$-instantons over $P^{\dagger}$ is defined to be 
\[
\M_{\phi}(X, P^{\dagger}):= \left\{ A \in \A_k : F_{A} \wedge \psi =0 \right\}/\G_{k+1}. 
\]
\end{dfn}

Although we work with connections in $L^2_k$ class, each equivalence class of $G_2$-instantons admit smooth representatives. So we choose to omit regularity in the notation of moduli spaces. 

\subsection{\em Local Deformation} \label{ss3.2} \hfill

\vspace{3mm}

The local structure of the moduli space $\M_{\phi}(X, P^{\dagger})$ is best understood via the study of the `deformation complex'. Infinitesimally, the action of the gauge group $\G_{k+1}$ on $\A_k$ is given by 
\[
\evalat*{{d \over dt}}{t=0} e^{t\xi} \cdot A = - d_A \xi, \quad \xi \in L^2_{k+1}(X, \g_P)
\]
Moreover, the linearization of the $G_2$-instanton operator $F_A \wedge \psi$ is given by 
\[
\evalat*{{d \over dt}}{t=0} F_{A+t\alpha} \wedge \psi = d_A \alpha \wedge \psi, \quad \alpha \in L^2_k(X, \Lambda^1 \otimes \g_P). 
\]
This leads us to consider the following sequence $E_A(X)$ of differential operators:
\[
L^2_{k+1}(X, \g_P) \xrightarrow{-d_A} L^2_k(X, \Lambda^1 \otimes \g_P) \xrightarrow{d_A \wedge \psi} L^2_{k-1}(X, \Lambda^6 \otimes \g_P) \xrightarrow{d_A} L^2_{k-2}(X, \Lambda^7 \otimes \g_P). 
\]
We note that the sequence $E_A(X)$ becomes a chain complex if and only if $A$ is a $G_2$-instanton, in which case we refer to $E_A(X)$ as the deformation complex of the moduli space $\M_{\phi}(X, P^{\dagger})$ at $[A]$. This complex $E_A(X)$ is an elliptic complex in the sense that the associated sequence given by the principal symbols of the differential operators are exact (c.f. \cite{DS11}). So the cohomology of $E_A(X)$ is finite dimensional. 

The cohomology of $E_A(X)$ has concrete geometric meanings similar to the deformation complex of $ASD$ instantons in dimension four. $H^0(E_A(X))$ is the Lie algebra of the isotropy group $\Stab(A)$ of $A$. To better understand $\Stab(A)$, we consider the corresponding connection $A^{\dagger}$ on $P^{\dagger}$. Let's fix a non-singular basepoint $x_0 \in X$, and identify the fiber of $G'_P|_{x_0}$ with $SU(r)$. $u \in \Stab(A)$ means that $u$ is $A^{\dagger}$-parallel, thus is determined by its restriction to $x_0$. Thus $\Stab(A)$ is identified with a subgroup of $SU(r)$ (actually it's a Lie subgroup although not necessarily closed). Denote by $\Hol(A^{\dagger}, x_0) \subset U(r)$ the holonomy group generated by automorphisms of $P^{\dagger}|_{x_0}$ obtained via $A^{\dagger}$-parallel transports along orbi-loops based at $x_0$. Then $\Stab(A)$ is identified with  the centralizer of $\Hol(A^{\dagger}, x_0)$ in $SU(r)$. We denote by $Z(r)$ the center of $SU(r)$ which consists of multiples of the identity matrix by $r$-th roots of unity.

\begin{lem}
Let $A^{\dagger}$ be a unitary connection on $P^{\dagger}$ that induces the connection $A$ on $P$. Then the following are equivalent.
\begin{enumerate}[label=(\alph*)]
\item $\Stab(A) = Z(r)$. 
\item $\dim H^0(E_A(X)) = 0$.
\item $\Hol(A^{\dagger})$ acts irreducibly on $\C^r$. 
\end{enumerate}
\end{lem}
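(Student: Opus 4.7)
My plan is to translate each of the three conditions into a statement about the adjoint representation of the holonomy group $H := \Hol(A^{\dagger}, x_0) \subset U(r)$ on $\su(r)$, and then close the circle using Schur's lemma.

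First I would unpack $H^0(E_A(X))$. The first differential in the deformation complex is $-d_A : L^2_{k+1}(X, \g_P) \to L^2_k(X, \Lambda^1 \otimes \g_P)$, so $H^0(E_A(X))$ is the space of $A$-parallel sections of $\g_P$. Fixing a non-singular basepoint $x_0$ and trivializing $(\g_P)_{x_0} \simeq \su(r)$, parallel transport yields an injection $\mathrm{ev}_{x_0} : H^0(E_A(X)) \hookrightarrow \su(r)$ whose image is exactly $\su(r)^H := \{X \in \su(r) : \Ad(h)X = X \; \forall h \in H\}$, since any parallel section of $\g_P$ must be preserved by parallel transport around every orbiloop based at $x_0$. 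The discussion preceding the lemma already identifies $\Stab(A)$ with the centralizer $C_{SU(r)}(H)$ via evaluation at $x_0$; because $\Stab(A)$ is a Lie subgroup of $SU(r)$ cut out by commutation relations with $H$, its Lie algebra is precisely $\su(r)^H$. In particular, (a) immediately implies (b) since $Z(r)$ is finite.

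Next I would apply Schur's lemma to the complex commutant $C := C_{M_r(\C)}(H) \subseteq M_r(\C)$. By Schur, $H$ acts irreducibly on $\C^r$ if and only if $C = \C \cdot I$; intersecting with $\mathfrak{u}(r)$ then gives $C \cap \mathfrak{u}(r) = i\R \cdot I$, whose trace-free part vanishes, so $\su(r)^H = 0$, and at the group level $C_{SU(r)}(H) = Z(r)$. This handles (c) $\Rightarrow$ (a) and (c) $\Rightarrow$ (b). Conversely, if $H$ acts reducibly via an orthogonal decomposition $\C^r = V_1 \oplus V_2$ with $0 < \dim V_1 < r$, then the traceless skew-Hermitian element
\[
X := i\bigl(\dim V_2 \cdot \mathrm{pr}_{V_1} - \dim V_1 \cdot \mathrm{pr}_{V_2}\bigr)
\]
is a non-zero element of $\su(r)^H$, and $\exp(tX)$ is a one-parameter family in $C_{SU(r)}(H)$ exiting $Z(r)$ for small $t \neq 0$. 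So failure of (c) violates both (a) and (b), completing the three-fold equivalence.

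The only delicate bookkeeping is the $U(r)$-versus-$SU(r)$ distinction: $\Hol(A^{\dagger}, x_0)$ lives in $U(r)$, but the gauge group $\G_{k+1}$ is determinant-one, so $\Stab(A)$ is computed inside $SU(r)$, which is why $Z(r)$ rather than $U(1) \cdot I$ is the "trivial" stabilizer. This causes no genuine difficulty because the adjoint action factors through $PU(r)$, so the invariants $\su(r)^H$ and the centralizer depend only on the image of $H$ in $PU(r)$, and Schur applies unchanged. I do not expect a substantive obstacle beyond this normalization check: the argument is essentially Schur's lemma plus the standard parallel-transport identification of flat sections with holonomy-invariant vectors.
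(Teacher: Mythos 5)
Your proof is correct and follows essentially the same route as the paper: identify $H^0(E_A(X))$ with the holonomy-invariant part of $\su(r)$ (the Lie algebra of $\Stab(A)$), apply Schur's lemma for the irreducible case, and in the reducible case exhibit a one-parameter subgroup of the centralizer — your element $X = i(\dim V_2 \cdot \mathrm{pr}_{V_1} - \dim V_1 \cdot \mathrm{pr}_{V_2})$ is exactly the infinitesimal generator of the circle $\zeta \mapsto \mathrm{diag}(\zeta^{r_2} I_{r_1}, \zeta^{-r_1}I_{r_2})$ used in the paper. No gaps.
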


\begin{proof}
$(a) \Longrightarrow (b)$. This follows directly from the fact that $H^0(E_A(X))$ is the Lie algebra of $\Stab(A)$. 

$(b) \Longrightarrow (c)$. Suppose the action of $\Hol(A^{\dagger})$ splits into $\C^r$ two invariant subspaces $\C^{r_1}$ and $\C^{r_2}$. For each element $\zeta \in U(1)$, we define a map on $\C^r$ by multiplying the first $r_1$ factors by $\zeta^{r_2}$ and the remaining $r_2$ factors by $\zeta^{-r_1}$. Then each map defines an element in $SU(r)$ that intertwines with the action of $\Hol(A^{\dagger})$. Such an assignment embeds a neighborhood of $1 \in U(1)$ in $SU(r)$. This implies that $\dim H^0(E_A(X)) \geq 1$. 

$(c) \Longrightarrow (a)$. Since $\Hol(A^{\dagger})$ acts irreducibly on $\C^r$, Schur's Lemma tells us that each element in $\Stab(A)$ must act on $\C^r$ via a scalar multiplication. Since $\Stab(A)$ lies in $SU(r)$ and contains $Z(r)$, we conclude that $\Stab(A) = Z(r)$. 
\end{proof}

\begin{dfn}
A connection $A \in \A_k$ is said to be irreducible if its isotropy group $\Stab(A) = Z(r)$, and reducible otherwise. We denote the space of irreducible connections of class $L^2_k$ by $\A^*_k$. 
\end{dfn}

Let $A$ be a $G_2$-instanton. Since each element in $\Stab(A)$ is $A$-parallel, one readily checks that $E_A(X)$ is $\Stab(A)$-equivariant with respect to the adjoint action. The first cohomology $H^1(E_A(X))$ represents the Zariski tangent space of $[A]$ in $\M_{\phi}(X, P^{\dagger})$. The argument in \cite[Chapter 4]{DK90} can be applied to obtain a Kuranishi obstruction map $\mathfrak{o}_A: H^1(E_A(X)) \to H^2(E_A(X))$ so that a neighborhood of $[A]$ in $\M_{\phi}(X, P^{\dagger})$ can be identified with a neighborhood in $\mathfrak{o}_A^{-1}(0)/\Stab(A)$. Note that when $\Stab(A) = Z(r)$ is the center, the action of $\Stab(A)$ on $E_A(X)$ is trivial. In particular, when $H^0(E_A(X))=H^2(E_A(X)) = 0$, a neighborhood of $[A]$ is identified with a neighborhood of the origin in $H^1(E_A(X))$. This interpretation motivates the following definition.

\begin{dfn}\label{d3.7}
A $G_2$-instanton $[A] \in \M_{\phi}(X, P^{\dagger})$ is said to be regular if $H^2(E_A(X)) = 0$. 
\end{dfn}

By taking adjoints on the even part of $E_A(X)$, one can wrap-up the information of $E_A(X)$ into a single elliptic operator:
\begin{equation}
L_A:=
\begin{pmatrix}
0 & - d^*_A \\
-d_A & \star(d_A \wedge \psi)
\end{pmatrix}
: L^2_k(X, (\Lambda^0 \oplus \Lambda^1) \otimes \g_P) \longrightarrow L^2_{k-1}(X, (\Lambda^0 \oplus \Lambda^1) \otimes \g_P). 
\end{equation}
Note that $L_A$ is self-adjoint with respect to the $L^2$-inner product. So we conclude
\[
0=\ind L_A = h^1(E_A(X)) + h^3(E_A(X)) - h^0(E_A(X)) - h^2(E_A(X)),
\]
where $h^i(E_A(X)) = \dim H^i(E_A(X))$. From the discussion above, we conclude that a regular irreducible $G_2$-instanton is isolated in the moduli space. 

\subsection{\em Reducible $G_2$-Instantons} \label{ss3.3} \hfill

\vspace{3mm}

We start with a line bundle $L$ over a $G_2$-orbifold $(X, \phi)$. Denote by $\omega_{\phi}$ the harmonic $2$-form representing $c_1(L)$ with respect to the $G_2$-structure $\phi$. The $G_2$-representation $\Lambda^2 = \Lambda^2_7 \oplus \Lambda^2_{14}$ decomposes the space of harmonic $2$-forms into $\mathcal{H}^2(X) = \mathcal{H}^2_7(X) \oplus \mathcal{H}^2_{14}(X)$ since $\phi$ is parallel with respect to $g_{\phi}$. $G_2$-instantons over $L$ can be characterized by the following observation.

\begin{lem}
Let $L \to X$ be a line bundle. Then a unitary connection $A$ over $L$ is a $G_2$-instanton if and only if $F_A = -2\pi i \omega_{\phi}$ with $\omega_{\phi} \in \mathcal{H}^2_{14}(X)$. Moreover, gauge equivalence classes of $G_2$-instatnons are parametrized by the Jacobian torus 
\[
J(X):= H^1_{\orb}(X; i\R)/H^1_{\orb}(X; \Z).
\]
\end{lem}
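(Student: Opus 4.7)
The plan is to first reduce the $G_2$-instanton equation on a line bundle to a harmonic-form condition on the curvature, and then to compute the affine quotient of the resulting solution space by the gauge group.

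\textbf{Step 1 (Curvature must be harmonic and in $\Omega^2_{14}$).} By \autoref{l3.3}, the $G_2$-instanton equation $F_A \wedge \psi = 0$ is equivalent to $\pi_7(F_A) = 0$; on the abelian line bundle $L$ this says $F_A \in \Omega^2_{14}(X; i\R)$. The key observation is that closed sections of $\Lambda^2_{14} \otimes i\R$ are automatically harmonic: by the eigenvalue characterisation \eqref{e2.5}, any $\omega \in \Omega^2_{14}$ satisfies $\star \omega = -\phi \wedge \omega$, so
\[
d\star \omega \;=\; -d\phi \wedge \omega + \phi \wedge d\omega \;=\; \phi \wedge d\omega,
\]
since $d\phi = 0$ by the torsion-free hypothesis. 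Applied to the closed form $F_A$ this gives $d\star F_A = 0$, so $F_A$ is harmonic. Because $[F_A] = -2\pi i\, c_1(L)$ in $H^2_{\dR}(X; i\R) \cong H^2_{\orb}(X; i\R)$, uniqueness of harmonic representatives forces $F_A = -2\pi i\, \omega_\phi$ and, in particular, $\omega_\phi \in \mathcal{H}^2_{14}(X)$. Conversely, if $\omega_\phi \in \mathcal{H}^2_{14}(X)$, pick any reference connection $A_1$ on $L$; then $F_{A_1} + 2\pi i\, \omega_\phi$ is an exact $i\R$-valued $2$-form, say $d\beta$, and $A := A_1 - \beta$ is a connection with $F_A = -2\pi i\, \omega_\phi$ and hence a $G_2$-instanton.

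\textbf{Step 2 (Affine structure of the solution set).} Two connections on $L$ differ by an element of $\Omega^1(X; i\R)$, and equal curvature forces this difference to be closed. Hence the space of $G_2$-instantons with curvature $-2\pi i\, \omega_\phi$ is the affine space $A_0 + Z^1(X; i\R)$, where $A_0$ is the connection produced in Step 1 and $Z^1$ denotes closed $1$-forms.

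\textbf{Step 3 (Quotient by the gauge group).} The gauge group is $\G = C^{\infty}(X, U(1))$ acting by $u \cdot A = A - u^{-1} du$; the infinitesimal action exhausts the exact $1$-forms $d\Omega^0(X; i\R)$, and the components $\pi_0(\G) = H^1_{\orb}(X; \Z)$ contribute the remaining gauge transformations whose logarithmic derivatives $u^{-1}du$ are closed $i\R$-valued $1$-forms whose cohomology classes are precisely the integral lattice inside $H^1_{\orb}(X; i\R)$. Quotienting $Z^1(X; i\R)$ first by $d\Omega^0(X; i\R)$ (Hodge decomposition) and then by the integral lattice yields
\[
\M_{\phi}(X, L) \;\cong\; Z^1(X; i\R)\big/\{u^{-1}du : u \in \G\} \;\cong\; H^1_{\orb}(X; i\R)\big/H^1_{\orb}(X; \Z) \;=\; J(X),
\]
where the identification of lattices is up to the universal factor of $2\pi i$ absorbed into the normalisation of $J(X)$.

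The main subtlety is Step 3: one must verify carefully that $\pi_0(\G) \cong H^1_{\orb}(X;\Z)$ in the orbifold setting and that the logarithmic derivative map $u \mapsto u^{-1}du$ picks out exactly the integral closed $1$-forms. This follows from the identification of $[X, U(1)]$ with $H^1_{\orb}(X;\Z)$ via the classifying space $\mathcal{B}X$ recalled in \autoref{ss2.2}, together with the standard observation that for $u = e^{if}$ on a simply-connected chart one has $u^{-1}du = i\,df$, while the global periods of $u^{-1}du/(2\pi i)$ compute the homotopy class of $u$.
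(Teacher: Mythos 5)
Your proposal is correct and follows essentially the same route as the paper: harmonicity of $F_A$ from the eigenvalue relation $\star F_A = -F_A \wedge \phi$ together with $d\phi = 0$ and the Bianchi identity, then the standard Jacobian argument identifying the identity component of the gauge group with exact forms and $\pi_0(\G)$ with the integral lattice $H^1_{\orb}(X;\Z)$. The only cosmetic difference is that you prove existence of a connection realizing $-2\pi i\,\omega_\phi$ in place of the (trivial) ``if'' direction, which the paper dispatches by noting it follows from the definition.
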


\begin{proof}
Let $A$ be a $G_2$-instanton over $L$. Then (\ref{e2.5}) tells us that 
\[
\star( F_A \wedge \phi ) = -F_A  \Longrightarrow \star F_A = - F_A \wedge \phi. 
\]
The Bianchi identity then implies that $F_A$ is harmonic. Since $i/2\pi F_A$ represents $c_1(L)$, we conclude that $F_A = -2\pi i \omega_{\phi}$. Since $\pi_7(F_A) = 0$, we see that $\omega_{\phi} \in \mathcal{H}^2_{14}(X)$. The converse follows from the definition of $G_2$-instantons. 

Given two $G_2$-instantons $A_1$ and $A_2$ over $L$, their difference $a = A_1 - A_0 \in \Omega^1(X; i\R)$ is closed since their curvature forms coincide with $-2\pi i \omega_{\phi}$. Any unitary gauge transformation in the identity component of the gauge group takes the form $u = e^{\xi}$ with $\xi \in \Omega^0(X; i\R)$. Since $e^{\xi} \cdot A = A - d\xi$, we see that, up to gauge transformations in the identity component, $G_2$-instantons are identified with $H^1(X; i\R)$. Passing to the classifying space of the orbifold $X$, we see that the components of the unitary gauge group are identified with $H^1_{\orb}(X; \Z)$. The result then follows by identifying the action of elements from different components of the gauge group with the action of $H^1_{\orb}(X; \Z)$ on $H^1_{\orb}(X; \R)$ as in the case of manifolds. 
\end{proof}

With the knowledge of reducible $G_2$-instantons over line bundles, we can obtain a necessary condition for the existence of reducible projective $G_2$-instantons over unitary bundles of higher rank, which is analogous to that of the ASD instanton case \cite[Proposition 2.3]{K04}. 

\begin{lem}\label{l3.9}
Let $P^{\dagger}$ be a principal $U(r)$-bundle over a $G_2$-orbifold $(X, \phi)$. Denote by $\omega^{\dagger}_{\phi} \in \mathcal{H}^2(X)$ the harmonic representative of $c_1(\det P^{\dagger})$. Suppose $P^{\dagger}$ admits a reducible projective $G_2$-instanton. Then one can find an integer $r' \in (0, r)$ and an integral harmonic class $\omega \in \mathcal{H}^2(X)$ such that 
\[
\omega - \frac{r'}{r} \omega^{\dagger}_{\phi} \in \mathcal{H}^2_{14}(X). 
\]
\end{lem}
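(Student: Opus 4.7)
The plan is to exploit the splitting of $P^{\dagger}$ induced by a reducible $G_2$-instanton, extract a scalar $2$-form encoding the instanton condition on a trace-type summand, and then identify this scalar form as a harmonic representative of the desired cohomology class.

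First, I would use the reducibility assumption: since $A^{\dagger}$ is reducible, the holonomy $\Hol(A^{\dagger})$ acts non-trivially on $\C^r$ in a way that preserves a proper non-trivial subspace. Parallel transport then produces a holonomy-invariant splitting $P^{\dagger} = E_1 \oplus E_2$ as unitary orbibundles, with $\rk E_1 = r' \in (0,r)$ and $\rk E_2 = r - r'$, and correspondingly $A^{\dagger} = A_1 \oplus A_2$ with $A_i$ a unitary connection on $E_i$. Writing $\alpha_i := \tr F_{A_i}$, note that $\alpha_i$ is a closed $i\R$-valued $2$-form whose de Rham class is $-2\pi i c_1(E_i)$, and $c_1(E_1) + c_1(E_2) = c_1(\det P^{\dagger})$.

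Next, I would isolate the relevant scalar combination via a parallel endomorphism. Let $\tau$ be the section of $\g_P$ that acts as $(r-r')/r$ on $E_1$ and $-r'/r$ on $E_2$; this is $A^{\dagger}$-parallel and traceless. The pairing $\tr(\tau \cdot F_A)$ coincides with $\tr(\tau \cdot F_{A^\dagger}) = \tfrac{1}{r}\bigl((r{-}r')\alpha_1 - r'\alpha_2\bigr) =: \tfrac{1}{r}\beta$. Since $A$ is a $G_2$-instanton, $F_A \wedge \psi = 0$, and hence $\beta \wedge \psi = 0$. By \autoref{l3.3} (applied fibrewise to the scalar form $\beta$), this means $\beta$ lies pointwise in $\Lambda^2_{14}$.

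The third step is showing $\beta$ is harmonic. It is closed because each $\alpha_i$ is closed by the Bianchi identity. For coclosedness, the characterization $\Lambda^2_{14} = \{\omega : \star(\phi \wedge \omega) = -\omega\}$ gives $\star\beta = -\phi \wedge \beta$, so $d^*\beta = -\star d\star\beta = \star(d\phi \wedge \beta - \phi \wedge d\beta) = 0$ using $d\phi = 0$ (here is where the torsion-free hypothesis enters) and $d\beta = 0$. Thus $\tfrac{i}{2\pi r}\beta$ is a real harmonic $2$-form lying pointwise in $\Lambda^2_{14}$, and its de Rham class equals $c_1(E_1) - \tfrac{r'}{r}[\omega^{\dagger}_{\phi}]$ after the computation $(r{-}r')c_1(E_1) - r'c_1(E_2) = r\,c_1(E_1) - r'c_1(\det P^\dagger)$.

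Finally, by uniqueness of harmonic representatives, if $\omega \in \mathcal{H}^2(X)$ denotes the harmonic representative of the integral class $c_1(E_1)$, then $\omega - \tfrac{r'}{r}\omega^{\dagger}_{\phi}$ must coincide with $\tfrac{i}{2\pi r}\beta$, and therefore lies in $\mathcal{H}^2_{14}(X)$. The integer $r'$ and class $\omega$ produced this way satisfy the required conclusion. The only slightly subtle step is the extraction of the scalar equation from the $PU(r)$-instanton equation; once $\tau$ is chosen correctly the computation is immediate, but one must be careful that the trace-removal in passing from $F_{A^\dagger}$ to $F_A$ does not destroy the relevant information — this is why pairing with the traceless parallel endomorphism $\tau$ is the right device.
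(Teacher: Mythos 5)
Your proposal is correct and follows essentially the same route as the paper: decompose $A^{\dagger}=A^{\dagger}_1\oplus A^{\dagger}_2$, extract the scalar $2$-form $F_{\theta_1}-\tfrac{r'}{r}F_{\theta}$ (your $\tfrac{1}{r}\beta$), observe it wedges to zero against $\psi$, and identify its class via Chern--Weil. Your use of the parallel traceless endomorphism $\tau$ is just a cleaner packaging of the paper's computation with $\pi_7$ of the various traces, and your explicit harmonicity argument (closed by Bianchi, coclosed from $\star\beta=-\phi\wedge\beta$ and $d\phi=0$) spells out a step the paper leaves implicit in the phrase ``follows from the Chern--Weil homomorphism.''
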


\begin{proof}
The argument is completely analogous to that of \cite[Proposition 2.3]{K04}. Let $A^{\dagger}$ be a reducible projective $G_2$-instanton over $P^{\dagger}$. Then $A^{\dagger}$ reduces the structure group of $P^{\dagger}$ to $U(r_1) \times U(r_2)$ for some integers $r_1$, $r_2$ satisfying $r_1+r_2 = r$, i.e. one can find a sub-bundle $Q^{\dagger} \subset P^{\dagger}$ of the form $Q^{\dagger} = P^{\dagger}_1 \times_X P^{\dagger}_2$, where $P^{\dagger}_i$ is a principal $U(r_i)$-bundle over $X$. The connection $A^{\dagger}$ decomposes accordingly as $A^{\dagger}_1 \oplus A^{\dagger}_2$. Denote by $\theta_i = \det A^{\dagger}_i$ the induced connection on the determinant line. Then the corresponding curvature is given by the trace $F_{\theta_i} = \tr F_{A^{\dagger}_i}.$ We also denote $\theta = \det A^{\dagger}$. Since $A^{\dagger}=A^{\dagger}_1 \oplus A^{\dagger}_2$ is a projective $G_2$-instanton, we conclude that 
\[
\left(F_{A^{\dagger}_i} - \frac{1}{r_i} \tr F_{A^{\dagger}_i} \otimes \id \right) \wedge \psi = 0. 
\] 
The same formula also holds for $A^{\dagger}$. From this formula, we conclude that 
\[
\pi_7 \left( \tr(F_{A^{\dagger}})/r \right) = \pi_7 \left( \tr(F_{A^{\dagger}_1})/r_1 \right) =  \pi_7 \left( \tr(F_{A^{\dagger}_2})/r_2\right). 
\]
In particular, we get 
\[
\left(F_{\theta_1} - \frac{r_1}{r} F_{\theta} \right) \wedge \psi = 0. 
\]
Now the result follows from the Chern--Weil homomorphism by taking $\omega$ to be the harmonic representative of $c_1(P^{\dagger}_1)$ and $r' = r_1$. 
\end{proof}

Usually \autoref{l3.9} is not very helpful with avoiding reducible $G_2$-instantons in contrast with the ASD instanton case. Because any torsion-free $G_2$-manifold $(M, \phi)$ whose holonomy group coincides with $G_2$ has $b_1(M) = 0$. In this case, the necessary condition imposed by \autoref{l3.9} automatically holds. However, when the holonomy group is a proper subgroup of $G_2$, one can indeed use \autoref{l3.9} to avoid reducibles. Since we primarily work with flat connections, the reducible ones can be avoided by purely imposing topological assumptions on $P^{\dagger}$. 

\begin{dfn}\label{d3.10}
A principal $U(r)$-bundle $P^{\dagger}$ over a $G_2$-orbifold $(X, \phi)$ is called admissible if 
\begin{enumerate}
\item $(r-1)c_1^2(P^{\dagger}) - 2r c_2(P^{\dagger}) = 0 \in H^4_{\orb}(X; \Z)$.
\item There exists $\lambda \in H^{\orb}_2(X; \Z)$ such that $\langle c_1(\det P^{\dagger}), \lambda \rangle$ is coprime to $r$. 
\end{enumerate}
\end{dfn}

\begin{cor}
Let $P^{\dagger}$ be an admissible principal $U(r)$-bundle over a $G_2$-orbifold $(X, \phi)$. Then $P^{\dagger}$ does not admit any reducible projective $G_2$-instantons.
\end{cor}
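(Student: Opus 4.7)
The plan is to combine the two admissibility conditions into a numerical contradiction: condition (1) will force any projective $G_2$-instanton on $P^{\dagger}$ to be projectively flat, and condition (2) will then be incompatible with the Chern-class identity extracted from any nontrivial holonomy reduction.

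First I would note that, up to sign, condition (1) states $p_1(\g_P) = 0 \in H^4_{\orb}(X;\R)$ for the adjoint $PU(r)$-bundle $P$. The Yang--Mills identity (\ref{e3.3}) then yields $\mathcal{YM}(A) = 0$ for every projective $G_2$-instanton $A^{\dagger}$ on $P^{\dagger}$. Because $\pi_7(F_A) = 0$ for an instanton we have $|F_A|^2 = |\pi_{14}(F_A)|^2$, so $F_A \equiv 0$; equivalently $A^{\dagger}$ is projectively flat, i.e.\ $F_{A^{\dagger}} = \tfrac{1}{r} \tr(F_{A^{\dagger}}) \otimes \id$.

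Next, assuming toward a contradiction that $A^{\dagger}$ is reducible, the proof of \autoref{l3.9} produces a holonomy reduction $Q^{\dagger} = P^{\dagger}_1 \times_X P^{\dagger}_2 \subset P^{\dagger}$ of type $U(r_1) \times U(r_2)$ with $0 < r_1, r_2 < r$ and $A^{\dagger} = A^{\dagger}_1 \oplus A^{\dagger}_2$. Projective flatness of the ambient connection forces each block to satisfy $F_{A^{\dagger}_i} = \tfrac{1}{r} \tr(F_{A^{\dagger}}) \otimes \id_{r_i}$; tracing and applying the Chern--Weil homomorphism gives the de Rham identity
\[
r \cdot c_1(\det P^{\dagger}_i) = r_i \cdot c_1(\det P^{\dagger}) \quad \text{in } H^2_{\orb}(X;\R),
\]
so $r \, c_1(\det P^{\dagger}_i) - r_i \, c_1(\det P^{\dagger})$ is a torsion element of $H^2_{\orb}(X;\Z)$.

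To finish, I would invoke condition (2): pick $\lambda \in H^{\orb}_2(X;\Z)$ with $d := \langle c_1(\det P^{\dagger}), \lambda\rangle$ coprime to $r$, and set $d_i := \langle c_1(\det P^{\dagger}_i), \lambda\rangle \in \Z$. Since torsion classes in $H^2_{\orb}(X;\Z)$ pair trivially with integral homology (by the universal coefficient theorem), the identity descends to $r d_i = r_i d$ in $\Z$, forcing $r \mid r_i$ and contradicting $0 < r_i < r$. The main step to verify carefully will be the first one, namely that $(r-1)c_1^2(P^{\dagger}) - 2r c_2(P^{\dagger})$ is (up to sign) precisely the Pontryagin class $p_1(\g_P)$ that appears in (\ref{e3.3}), so that admissibility (1) genuinely forces the Yang--Mills energy to vanish; once that is in hand, the rest is an elementary divisibility argument.
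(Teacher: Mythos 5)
Your proposal is correct and follows essentially the same route as the paper: admissibility condition (1) combined with the energy identity \eqref{e3.3} forces $F_A = 0$, hence projective flatness, and then the reduction $A^{\dagger} = A^{\dagger}_1 \oplus A^{\dagger}_2$ from the argument of \autoref{l3.9} yields $F_{\theta_1} = \tfrac{r_1}{r}F_{\theta}$, whose Chern--Weil consequence contradicts the coprimality in condition (2). Your handling of the torsion ambiguity in $H^2_{\orb}(X;\Z)$ and the explicit divisibility step $rd_i = r_i d \Rightarrow r \mid r_i$ is a slightly more careful rendering of the same contradiction the paper draws.
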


\begin{proof}
Let $A^{\dagger}$ be a projective $G_2$-instanton over $P^{\dagger}$, and $A$ the corresponding connection over the adjoint $PU(r)$-bundle $P$. (\ref{e3.3}) tells us that $\|F_A\|^2_{L^2} = 4\pi^2/r \langle p_1(\g_P) \smile [\phi], [X] \rangle$. Since $p_1(\g_P) = (r-1)c_1^2(P^{\dagger}) - 2rc_2(P^{\dagger})$, we conclude that $F_A=0$. Thus $A^{\dagger}$ is projectively flat. 

Suppose $A^{\dagger}$ decomposes as $A^{\dagger} = A^{\dagger}_1 \oplus A^{\dagger}_2$. The argument of \autoref{l3.9} implies that $F_{\theta_1} - r_1/r F_{\theta} = 0$. In particular, the Chern--Weil homomorphism implies that $r_1/r c_1(\det P^{\dagger})$ is an integral class. This contradicts to the fact that $\langle c_1(\det P^{\dagger}), \lambda \rangle$ is coprime to $r$ for any $\lambda \in H^{\orb}_2(X; \Z)$ since we can choose $\lambda \in H^{\orb}_2(X; \Z)$ so that the pairing between $\lambda$ and $r_1/r \cdot c_1(\det P^{\dagger})$ to be a non-zero integer. 
\end{proof}

\subsection{\em Orientation in Simple Cases} \label{ss3.4} \hfill

\vspace{3mm}

The orientability issue of the moduli space $\M_{\phi}(X, P^{\dagger})$ has been addressed by Donaldson--Segal \cite{DS11} and Walpuski \cite{W13} in the manifold case. Recently Joyce--Upmeier \cite{JU21} established a canonical assignment for orientations of the moduli space with an extra input called `flag structures'. Rather than pursuing a full generalization to $G_2$-orbifolds, we will work out the orientability for simple $G_2$-orbifolds. 

We write $\Ind L_A:= \ker L_A \otimes (\cok L_A)^*$, and $\Ind \mathbb{L} \to \A_k$ for the vector bundle whose fiber at $A$ is $\Ind L_A$. The gauge group $\G_{k+1}$ acts on $\Ind L_A$ by extending the action $u \omega u^{-1}$ on $\g_P$-valued forms  $\omega$. 

\begin{prop}\label{p3.13}
Let $(X, \phi)$ be a simple $G_2$-orbifold, and $P^{\dagger} \to X$ a $U(r)$-bundle with either $r=2$ or $c_1(P^{\dagger})$ even. Then $\G_{k+1}$ acts trivially on $\det \Ind \mathbb{L}$. 
\end{prop}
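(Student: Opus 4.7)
The plan is to follow the standard Donaldson--style orientability argument, adapted to the $G_2$-orbifold setting, and reduce the statement to an index computation that can be controlled by the hypotheses on the rank and $c_1(P^{\dagger})$.

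First I would exploit the fact that $\A_k$ is an affine space, hence contractible, so the real line bundle $\det \Ind \mathbb{L} \to \A_k$ is topologically trivial and admits two continuous orientations. The action of $\G_{k+1}$ on $\det \Ind \mathbb{L}$ covers its action on $\A_k$, and varies continuously with the gauge transformation. Therefore the action of the identity component $\G^0_{k+1}$ is homotopic to the identity and preserves orientation; it suffices to show that $\pi_0(\G_{k+1})$ acts trivially. I would then identify $\pi_0(\G_{k+1})$ with homotopy classes of sections of $G'_P$ over the classifying space $\mathcal{B}X$. Since $SU(r)$ is $2$-connected, obstruction theory over $\mathcal{B}X$ identifies these classes with classes in $H^3_{\orb}(X;\pi_3(SU(r))) = H^3_{\orb}(X;\Z)$ together with possible lower-dimensional twists coming from the orbifold structure, all of which can be enumerated using the local models of \autoref{d2.7}.

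Next I would convert the action of a class $[u] \in \pi_0(\G_{k+1})$ on $\det \Ind \mathbb{L}$ into the mod $2$ index of a Dirac-type family, via the usual mapping-torus trick: form the closed $8$-orbifold $Y_u = X \times [0,1]/(x,1)\sim (u(x),0)$, and use the bundle $P^{\dagger}_u$ obtained by clutching with $u$. Because the deformation operator $L_A$ is the symbol of an elliptic $G_2$-instanton linearization plus a Coulomb-gauge term, the orientation change under the loop of connections joining $A$ to $u \cdot A$ is governed by the index mod $2$ of a twisted Dirac operator on $Y_u$; this is precisely the spectral flow of the family $L_{A_t}$ and is independent of the path $A_t$. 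Concretely, the relevant mod $2$ characteristic number is a combination of the Pontryagin / Chern numbers of $P^{\dagger}_u$ on $Y_u$, which can be expressed via $c_1(P^{\dagger})$ and $c_2(P^{\dagger})$ paired with orbifold cohomology classes dual to the mapping torus class of $u$.

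Finally I would carry out this characteristic class computation on the simple $G_2$-orbifold. The singular strata of $X$ are either associative (type I) or a curve at the intersection of two type I strata (type II), and over each the local isotropy group $\Z/2$ or $\Z/2 \oplus \Z/2$ acts on fibers in a way controlled by $\phi$-preserving involutions. Using the orbifold Atiyah--Singer formula for $Y_u$, the contributions from these strata evaluate to integer pairings involving $c_1(P^{\dagger})$ together with factors of $2$ coming from the order of the isotropy groups. When $r=2$ the adjoint bundle $\su(P^{\dagger})$ is associated to $PU(2) = SO(3)$, so only Pontryagin classes enter and the relevant number is automatically even; when $c_1(P^{\dagger})$ is even, the mixed term in $(r-1)c_1^2(P^{\dagger})/2r$ absorbs the remaining $\Z/2$ ambiguity. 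The main obstacle is precisely this last step: one needs to verify that for each simple local model and each generator of $\pi_0(\G_{k+1})$, the singular contribution to the mod $2$ index cancels under either of the two hypotheses. I would handle this by computing in the type I and type II charts directly, where the $\Z/2$-equivariant index localizes to a sum over fixed associative substrata whose normal bundles are standard $\C^2/\Z_2$ summands, making the parity of the contribution transparent.
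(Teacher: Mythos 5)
Your overall strategy coincides with the paper's: reduce the action of $u \in \G_{k+1}$ on $\det \Ind \mathbb{L}$ to the parity of the spectral flow of $L_{A_t}$ along a path from $A$ to $u\cdot A$, realize that spectral flow as the index of a Dirac-type operator on $S^1 \times X$ twisted by the clutched bundle $\g_{P_u}$, and evaluate that index with the orbifold Atiyah--Singer theorem. (A small imprecision: $u$ is a gauge transformation, not a diffeomorphism, so the base of the mapping torus is just $S^1 \times X$ and only the bundle is clutched; your formula $Y_u = X\times[0,1]/(x,1)\sim(u(x),0)$ conflates the two, but your subsequent description shows you mean the right thing. The reduction to $\pi_0(\G_{k+1})$ via obstruction theory is also unnecessary overhead, since the index argument works for every $u$ directly.)

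The genuine gap is in the final parity computation, which is the entire content of the proposition, and both of your stated reasons for why the hypotheses suffice are incorrect. First, for $r=2$ the observation that $\g_P$ is an $SO(3)$-bundle so ``only Pontryagin classes enter'' does not make the index ``automatically even'': the eighth-degree part of $\ch(\g_{P_u}^{\C})\hat{A}(Y)$ contains terms like $\tfrac{1}{12}(p_1(\g_{P_u})^2 - 2p_2(\g_{P_u}))$ and $-\tfrac{1}{24}p_1(\g_{P_u})p_1(Y)$, plus the fractional singular contributions with coefficients $\tfrac12$ and $\tfrac14$, and none of these is visibly an even integer. Second, the claim that when $c_1(P^{\dagger})$ is even ``the mixed term in $(r-1)c_1^2(P^{\dagger})/2r$ absorbs the remaining $\Z/2$ ambiguity'' does not correspond to anything in the actual index formula. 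What the paper does at this point --- and what your proposal is missing --- is to compare $\ind D_{\mathbb{A}}(\g_{P_u})$ with $(12+2r)\ind D_{\mathbb{A}}(P^{\dagger})$, the index of the same Dirac operator twisted by $P^{\dagger}$ itself. Since both indices are integers and $12+2r$ is even, all the fractional smooth terms and all the singular-strata contributions (which for both twists are proportional to $\int_{\Sigma^{(4)}_Y}c_2$) cancel modulo $2$, leaving only $\int_Y c_3(P^{\dagger}_u)\cdot \pi^*c_1(P^{\dagger}) \bmod 2$ after using that $Y$ is spin so $4\mid p_1(Y)$. The hypotheses then enter exactly here: $c_3$ vanishes when $r=2$, and the pairing is even when $c_1(P^{\dagger})$ is even. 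Without this comparison trick (or some substitute for it), your plan to verify the parity ``chart by chart'' on the type I and type II strata does not go through, because the individual Kawasaki contributions carry denominators and there is no transparent reason for their sum to be even.
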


Note that \autoref{p3.13} implies that $\M_{\phi}(X, P^{\dagger})$ is orientable. Because the tangent space $T_{[A]}\M$ of a regular irreducible connection is identified with $H^1(E_A(X))$ whose determinant is identified with $\det E_A(X)$. One can further identify $\det E_A(X) \simeq \det \Ind L_A$ canonically using the metric on $\g_P$-valued forms. Since $\A_k$ is contractible, we know that $\det \Ind \mathbb{L}$ is trivial. The triviality of the $\G_{k+1}$-action tells us that the bundle $\det \Ind \mathbb{L}$ descends to a trivial bundle over $\A_k/\G_{k+1}$, which restricts to the orientation line bundle of $\M_{\phi}(X, P^{\dagger})$ in the regular case. 

\begin{cor}
Let $(X, \phi)$ be a simple $G_2$-orbifold, and $P^{\dagger} \to X$ a $U(r)$-bundle with either $r=2$ or $c_1(P^{\dagger})$ even. Suppose the irreducible moduli space $\M^*_{\phi}(X, P^{\dagger})$ is regular. Then $\M^*_{\phi}(X, P^{\dagger})$ is orientable. 
\end{cor}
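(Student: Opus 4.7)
\medskip

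\noindent\textbf{Proof proposal.} The strategy is to reduce the orientability of $\M^*_{\phi}(X, P^{\dagger})$ to the triviality of the determinant line bundle of the index $\det \Ind \mathbb{L}$ over the quotient $\A^*_k/\G_{k+1}$, and then invoke \autoref{p3.13}.

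First I would identify the orientation line at a regular irreducible $[A]$. By the discussion of the deformation complex in \autoref{ss3.2}, the tangent space $T_{[A]}\M^*_{\phi}(X, P^{\dagger})$ is canonically isomorphic to $H^1(E_A(X))$. Since $A$ is irreducible and regular, $H^0(E_A(X)) = H^2(E_A(X)) = 0$, and the self-duality of $L_A$ pairs $H^1$ with $H^3$, giving canonical isomorphisms
\[
\det T_{[A]}\M^*_{\phi}(X, P^{\dagger}) \;\cong\; \det E_A(X) \;\cong\; \det \Ind L_A,
\]
where the last identification uses the $L^2$-metric on $\g_P$-valued forms to compare alternating sums of cohomology with $\ker L_A \otimes (\cok L_A)^*$. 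These isomorphisms are canonical in $[A]$, so globalizing them over the regular part of $\M^*_{\phi}$ shows that the orientation line of $\M^*_{\phi}(X, P^{\dagger})$ is the restriction of the descended line bundle $\det \Ind \mathbb{L} / \G_{k+1}$.

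Next I would show that $\det \Ind \mathbb{L}$ is trivial as a line bundle over $\A^*_k$. Because $\A_k$ is an affine space (and hence contractible), the index bundle $\Ind \mathbb{L} \to \A_k$ is a topologically trivial virtual bundle, so $\det \Ind \mathbb{L}$ admits a nowhere-vanishing section on $\A_k$, a fortiori on the open subset $\A^*_k$ of irreducible connections. To produce an orientation on the quotient, I need the two sections obtained by transporting a chosen trivialization along different $\G_{k+1}$-orbits to agree up to a positive scalar; equivalently, the $\G_{k+1}$-action on the fibers of $\det \Ind \mathbb{L}$ must preserve orientations. This is exactly the content of \autoref{p3.13}, under the hypothesis that $(X, \phi)$ is simple and $r = 2$ or $c_1(P^{\dagger})$ is even.

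Finally I would conclude the descent. Since $\G_{k+1}$ acts trivially on $\det \Ind \mathbb{L}$ by \autoref{p3.13}, and since the central subgroup $Z(r) \subset \G_{k+1}$ acts trivially on $\A^*_k$ itself, the quotient $\det \Ind \mathbb{L}/\G_{k+1}$ is a well-defined line bundle over $\A^*_k/\G_{k+1}$, and it inherits a global non-vanishing section from the one upstairs. Restricting this trivialization to the regular locus $\M^*_{\phi}(X, P^{\dagger}) \subset \A^*_k/\G_{k+1}$ and combining with the canonical identification above yields the sought nowhere-vanishing section of the orientation line, which proves orientability. The only substantive step is the descent, and that work is already carried out in \autoref{p3.13}; everything else is routine bookkeeping with the deformation complex.
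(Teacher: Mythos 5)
Your proposal is correct and follows essentially the same route as the paper: identify the orientation line at a regular irreducible point with $\det \Ind L_A$ via the deformation complex, trivialize $\det \Ind \mathbb{L}$ over the contractible space $\A_k$, and use \autoref{p3.13} to descend the trivialization through the gauge group to the quotient, restricting to $\M^*_{\phi}(X, P^{\dagger})$. No gaps; this is exactly the argument given in the paragraph following \autoref{p3.13}.
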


The proof of \autoref{p3.13} is a standard application of the Atiyah--Singer's index theory. To set it up, we write $P_u$ for the bundle over $S^1 \times X$ obtained by identifying the boundary of $[0, 1] \times P$ via the gauge transformation $u \in \G_{k+1}$.  Given $A \in \A_k$, we choose a smooth path $(A_t)_{t \in [0,1]}$ with $A_0 = A$ and $A_1 = u\cdot A$. Then we get an elliptic operator on $\g_{P_u}$-valued forms of $S^1 \times X$:
\[
S_u(A) := \frac{d}{dt} + L_{A_t}: L^2_k(S^1 \times X, (\Lambda^0 \oplus \Lambda^1) \otimes \g_{P_u}) \longrightarrow L^2_{k-1}(S^1 \times X, (\Lambda^0 \oplus \Lambda^1) \otimes \g_{P_u}).
\]
The index can be related to the spectral flow $\ind S_u(A) = \Sf(L_{A_t})$, which is independent of the choice of the paths $(A_t)$ from the standard theory of Atiyah--Patodi--Singer \cite{APS3}. Moreover, the weight of the $u$-action on $\det \Ind \mathbb{L}$ is given by $(-1)^{\Sf(L_{A_t})}$. Thus the proof of \autoref{p3.13} is reduced to the following statement.

\begin{lem}\label{l3.15}
Under the assumption of \autoref{p3.13}, we know $\ind S_u(A)$ is even for all $A \in \A_k$ and $u \in \G_{k+1}$. 
\end{lem}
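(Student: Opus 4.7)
The plan is to identify $\ind S_u(A)$ with a topological invariant of the bundle $P_u \to S^1 \times X$ via the orbifold (Kawasaki) Atiyah--Singer theorem, and then to combine the hypothesis on $P^{\dagger}$ with the simple-singularity structure of $X$ to extract the parity. Since the spectral flow depends only on $P_u$ and not on the chosen path $(A_t)$, we may freely pick any convenient reference connection $A$ and any smooth path from $A$ to $u \cdot A$.

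The first substantive step is to recognize $L_A$ as a Dirac-type operator. A $G_2$-orbifold carries a canonical spin structure coming from the inclusion $G_2 \subset \Spin(7)$, and Clifford multiplication identifies the spinor bundle $S$ on $X$ with $\Lambda^0 \oplus \Lambda^1$; under this identification $L_A$ has the same principal symbol as the twisted Dirac operator $D_A$ on $S \otimes \g_P$. Since the index is a symbolic invariant, $S_u(A)$ and the twisted positive-chirality Dirac operator $D_{A_t}^+$ on the $8$-orbifold $S^1 \times X$ share the same index. Kawasaki's theorem then expresses this common value as a sum of a smooth-locus integral and equivariant contributions from the singular strata of $S^1 \times X$.

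On the smooth locus, the contribution is an integral of the form $\int^{\mathrm{sm}}_{S^1 \times X} \hat{A}(TX)\,\ch(\g_{P_u} \otimes \C)$, of which only the summands of $\ch(\g_{P_u}\otimes\C)$ that carry the $S^1$ direction survive. Writing $\alpha \in H^3(X;\Z)$ for the class through which $u$ is detected by $\pi_0 \G_{k+1}$, this contribution becomes a $\Z$-linear combination of pairings of the form $\langle \alpha \smile \beta, [X]\rangle$ with $\beta$ a polynomial in the Pontryagin classes of $TX$ and $\g_P$. The hypothesis that $r=2$ or $c_1(P^{\dagger})$ is even means that, after twisting by a suitable line bundle, $P$ admits a lift to an $SU(r)$-bundle, which forces the relevant Pontryagin classes of $\g_P$ to carry enough divisibility to render the smooth-locus contribution even.

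For the singular strata, the simplicity of $(X,\phi)$ guarantees that each one is locally modeled on the fixed set of a $\Z/2$- or $(\Z/2)^2$-action inside $G_2$ itself, as prescribed in \autoref{d2.7}. Kawasaki's formula produces an equivariant integral $\int_{\Sigma} \hat{A}_g(\nu_{\Sigma})\,\ch_g(\g_{P_u}|_{\Sigma})$ for each fixed locus $\Sigma$ and isotropy element $g$, which one can compute explicitly because $g \in G_2$ acts with prescribed weights on the spinor bundle and on $\g_{P_u}$. The main obstacle I anticipate is the Type II case: the three non-trivial elements of $(\Z/2)^2$ each contribute along a common one-dimensional intersection, and one must simultaneously track the spinor decomposition of $S^+$ and the rank-$r$ decomposition of $P^{\dagger}|_{\Sigma}$ under $(\Z/2)^2$. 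I plan to handle this by exploiting multiplicativity of the equivariant Chern character under the two commuting involutions, reducing the Type II parity to a product of two Type I parities and concluding that $\ind S_u(A)$ is even.
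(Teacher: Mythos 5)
Your overall framework --- identifying $S_u(A)$ with a Dirac operator on the spin $8$-orbifold $S^1\times X$ twisted by $\g_{P_u}$ and invoking Kawasaki's orbifold index theorem --- is exactly the paper's starting point. But the core of the proof, the arithmetic that actually extracts the parity, is missing, and the two places where you wave at it are not correct as stated. First, the smooth-locus contribution is \emph{not} a $\Z$-linear combination of characteristic pairings: the degree-$8$ part of $\ch(\g_{P_u}\otimes\C)$ carries denominators $3$ and $12$ (e.g.\ the term $-s_1s_3/3$, which produces $c_1c_3$), and the $\hat{A}$-class contributes a further $p_1/24$. Asserting integrality here begs the question. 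Second, your plan for the singular strata --- computing the ``parity'' of each equivariant fixed-locus integral separately, and factoring the Type II contribution into a product of Type I parities --- is ill-posed, because the individual Kawasaki contributions are only rational numbers (each carries a $1/|G|$ prefactor and an equivariant $\hat{A}$-denominator); only the \emph{total} is an integer, so one cannot speak of the parity of a single stratum's contribution.

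The paper's resolution of both problems is a single trick you do not have: compare $\ind D_{\mathbb{A}}(\g_{P_u})$ with $(12+2r)\,\ind D_{\mathbb{A}}(P^{\dagger})$, i.e.\ use the integrality of a \emph{second} index (the Dirac operator twisted by $P^{\dagger}$ itself) to absorb simultaneously the fractional coefficients of the smooth integral and the singular-stratum terms $\pm\int_{\Sigma_Y^{(4)}}rc_2$ versus $\pm\tfrac12\int_{\Sigma_Y^{(4)}}c_2$. After this cancellation and the observation that $p_1(Y)\equiv 0 \bmod 4$ on a spin manifold, the entire index reduces mod $2$ to the single term $\int_Y c_3(P^{\dagger}_u)\smile\pi^*c_1(P^{\dagger})$. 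This identifies precisely why the hypothesis ``$r=2$ or $c_1(P^{\dagger})$ even'' is what is needed: $c_3$ vanishes for $r=2$, and the cup product is even when $c_1$ is. Your alternative explanation --- that the hypothesis lets $P$ lift to an $SU(r)$-bundle and thereby forces divisibility of Pontryagin classes of $\g_P$ --- does not capture this: the obstruction is an odd Chern class term $c_3c_1$ of $P^{\dagger}_u$, not a divisibility property of $p_i(\g_P)$, and for $r>2$ evenness of $c_1$ does not give an $SU(r)$-lift (that would require $c_1\equiv 0 \bmod r$). Without the comparison trick (or some substitute argument establishing integrality of the relevant combinations), the proof does not go through.
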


\begin{proof}
The $G_2$-structure $\phi$ determines a spin structure $\s$ on $S^1 \times X$. As explained in \cite{W13}, the principal symbol of $S_u(A)$ is dual to that of the Dirac operator $D_{\mathbb{A}}$ on $\s$ twisted by $\g_{P_u}$, where $\mathbb{A}= d/dt + A_t$ is the induced connection on $\g_{P_u}$. To compute the index of $D_{\mathbb{A}}(\g_{P_u})$, we make use of the orbifold generalization of Atiyah--Singer's index theorem in \cite{K81}. 

Let's write $Y = S^1 \times X$ temporarily. For each $y \in Y$, the isotropy group $G_y = \Z/2$ or $V_4$, where $V_4$ is the Klein four-group. Explicitly we write $\Z/2 = \{1, -1\}$ and $V_4=\{1, a, b, ab\}$ with $a = (-1, 1)$ and $b=(1, -1)$. We consider the modified singular sets:
\[
\Sigma_I:= \{ (y, -1) \in Y \times \Z/2 : G_y = \Z/2\}, \quad \Sigma_{V} : = \{(y, g) \in Y \times V_4: G_y = V_4, \;\; g \neq 1\}. 
\]
These singular sets patch together to an ineffective orbifold $\Sigma_Y:= \Sigma_I \sqcup_{\sim} \Sigma_V$, where $(y, -1) \sim (y, g)$ if and only if there is an inclusion of orbifold charts $(\tilde{U}_y, G_y) \hookrightarrow (\tilde{U}_{y'}, G_{y'})$ that sends $-1 \in G_y$ to $g \in G_{y'}$. To see the orbifold structure on $\Sigma_Y$, we write $\tilde{U}^g_y$ for the $g$-invariant subset of $\tilde{U}_y$. Then the orbifold atlas over $\Sigma_Y$ is given by $\{(\tilde{U}^g_y, G_y) : y \in Y, g \neq 1 \in G_y\}$. Note that $\tilde{U}^g_y$ is an open ball of dimension either $2$ or $4$ when $g$ is non-trivial. Thus we can write $\Sigma_Y = \Sigma_Y^{(2)} \cup \Sigma_Y^{(4)}$ into the disjoint of $2$- and $4$-dimensional components. We also need to consider the normal bundle $N(\Sigma_Y)$ of $\Sigma_Y$ locally given by $(\tilde{U}_y/\tilde{U}^g_y, G_y) \to (\tilde{U}^g_y, G_y)$. In particular, $g$ acts on the fiber $N(\Sigma_Y)|_{(y, g)}$ by $-1$-multiplication. 

With the above set-up, the orbifold version of Atiyah--Singer's index theorem \cite{K81} tells us that  
\[
\begin{split}
\ind D_{\mathbb{A}}(\g_{P_u}) & = \int_Y \ch(\g^{\C}_{P_u}) \hat{A}(Y) + \frac{1}{2}\int_{\Sigma^{(4)}_Y} \ch(\g^{\C}_{P_u}) \hat{A}_{\pi}(N(\Sigma^{(4)}_Y))\hat{A}(\Sigma^{(4)}_Y) \\
& + \frac{1}{4}\int_{\Sigma^{(2)}_Y}\ch(\g^{\C}_{P_u}) \hat{A}_{\pi}(N(\Sigma^{(2)}_Y))\hat{A}(\Sigma^{(2)}_Y)\\
&=: I_1(\g^{\C}_{P_u}) + I_2(\g^{\C}_{P_u}) + I_3(\g^{\C}_{P_u}), 
\end{split}
\]
where $\g^{\C}_{P_u} = \g_{P_u} \otimes \C$, and $\hat{A}_{\pi}$ is the twisted $\hat{A}$-class in \cite[Page 267]{LM89}. 

The Chern character of $\g^{\C}_{P_u}$ can be computed as follows. The differential of the adjoint homomorphism gives us an isomorphism $\ad: \su(r) \to \mathfrak{pu}(r)$. Thus we can identify $\g_{P_u} = \su(P^{\dagger}_u)$. Furthermore, $\End(P^{\dagger}_u) = \su(P^{\dagger}_u) \otimes \C \oplus \underline{\C}$. After throwing away the terms of degree greater than $8$, we can compute
\[
\begin{split}
\ch(P^{\dagger}_u) & = r+s_1+\frac{1}{2}s_2+\frac{1}{6}s_3+\frac{1}{24}s_4\\
& = r + c_1 + \frac{1}{2}(c_1^2 - 2c_2) + \frac{1}{6}(c_1^3 - 3c_2c_1 +3c_3) \\
& + \frac{1}{24}(c_1^4 - 4c_2c_1^2 + 4c_3c_1 + 2c_2^2 - 4c_4),
\end{split}
\]
where $s_i$ is the polynomial that expresses the ith power sum into elementary symmetric polynomials whose inputs are the Chern classes $c_i$ of $P^{\dagger}_u$. Thus the Chern character of $\g^{\C}_{P_u}$ is given by 
\[
\begin{split}
\ch(\g^{\C}_{P_u}) & = \ch(\End(P^{\dagger}_u)) - 1 = \ch(P^{\dagger}_u)\ch(P^{\dagger, *}_u) - 1 \\
& = \left(r+s_1+\frac{1}{2}s_2+\frac{1}{6}s_3+\frac{1}{24}s_4 \right)\left(r-s_1+\frac{1}{2}s_2-\frac{1}{6}s_3+\frac{1}{24}s_4 \right) - 1 \\
& = r^2 - 1 + rs_2 - s^2_1 + \frac{rs_4}{12} - \frac{s_1s_3}{3} + \frac{s^2_2}{4} \\
& = r^2 - 1 +(r-1)c_1^2 -2rc_2 \\
& + \frac{1}{12} \left((r-1)c_1^4 - 4rc_2c_1^2 + (4r-12)c_3c_1 + (2r+12)c^2_2-4rc_4 \right).
\end{split}
\]

Note that $Y=S^1 \times X$ is a product with circle. So up to the eighth degree, the $\hat{A}$-class is given by $\hat{A} = 1 -p_1/24$, where $p_1:=p_1(Y)$. Note that the normal bundle $N(\Sigma_Y)$ is pulled back from the normal bundle of the correspondingly constructed singular sets $\Sigma_X$ whose components have dimension either $1$ or $3$. Up to the third degree, $\hat{A}_{\pi} = \pm 1$, where the sign can be worked out following \cite{AB68}. 

Since $\det u = \id$, we know that 
\[
c_1(P^{\dagger}_u)  =c_1(\det P^{\dagger}_u)  = \pi^* c_1(\det P^{\dagger}) = \pi^*c_1(P^{\dagger}),
\]
where $\pi: S^1 \times X \to X$ is the projection. Thus by dimension counting, we know 
\[
\int_Y c_1^4 = 0 \qquad \int_Y c_1^2 \cdot p_1 = 0 \qquad \int_{\Sigma^{(4)}_Y} c_1^2 = 0 \qquad \int_{\Sigma^{(2)}_Y} c_1 = 0. 
\] 

Now we can compute 
\[
\begin{split}
& I_1(\g^{\C}_{P_u})  = \frac{1}{12} \int_Y rc_2 \cdot p_1 + \frac{1}{6}\int_Y -2rc_2c_1^2 + (2r-6)c_3c_1 + (r+6)c^2_2 - 2rc_4   \\
& I_2(\g^{\C}_{P_u}) + I_3(\g^{\C}_{P_u})  = \pm \int_{\Sigma^{(4)}_Y} rc_2. 
\end{split}
\]
We can twist the spin bundle of $Y$ by $P^{\dagger}$, and get corresponding terms:
\[
\begin{split}
& I_1(P^{\dagger}) =\frac{1}{24}\int_Y c_2 \cdot p_1 + \frac{1}{12} \int_Y -2c_2c_1^2 + 2c_3c_1 + c^2_2 - 2c_4 \\
& I_2(P^{\dagger}) + I_3(P^{\dagger}) = \pm \frac{1}{2} \int_{\Sigma^{(4)}_Y} c_2. 
\end{split}
\]
The above computation tells us that 
\[
\ind D_{\mathbb{A}}(\g_{P_u}) - (12 + 2r) \ind D_{\mathbb{A}}(P^{\dagger}) \equiv -\frac{1}{2}\int_Y c_2 \cdot p_1 - 3\int_Y c_3 \cdot c_1 \; \mod 2. 
\]
Since $Y$ is spin, we know $p_1(Y)$ is divisible by four. Thus we conclude that 
\[
\ind D_{\mathbb{A}}(\g_{P_u}) \equiv  \int_Y c_3(P^{\dagger}_u) \cdot \pi^*c_1(P^{\dagger}) \; \mod 2
\]
So when $\rk(P^{\dagger}) = 2$ or $c_1(P^{\dagger})$ is even, we know that $\ind D_{\mathbb{A}}(\g_{P_u})$ is even.  
\end{proof}

\begin{rem}
The last product term $c_3c_1$ also appears in the manifold case if one considers $U(r)$-bundles. Joyce--Upmeier \cite[Theorem 1.2]{JU21} proved the orientability for $U(r)$-bundles with the help of the embedding $U(r) \hookrightarrow SU(r+1)$. Thus $c_3c_1$ has to be even at least in the manifold case, although we don't know a good reason for this.
\end{rem}

\section{\large \bf Perturbations and Transversality}\label{s4}

Holonomy perturbations have been constructed for achieving transversality of ASD connections in dimension 4 \cite{D87} and flat connections in dimension 3 \cite{F88}. In this section, we argue that the transversality of the irreducible stratum of $G_2$-instantons can also be achieved via holonomy perturbations.

\subsection{\em Holonomy Perturbations} \label{ss4.1} \hfill

\vspace{3mm}

Let $P^{\dagger}$ be a $U(r)$-bundle over a $G_2$-orbifold $(X, \phi)$. Suppose we have an $l$-tuple $\pmb{\rho}=(\rho_1, ..., \rho_l)$ of smooth immersions $\rho_i: S^1 \times D^6 \to X$ satisfying 
\begin{equation}\label{e4.1}
\rho_i(s, x) = \rho_j(s, x), \quad \text{ for all } s \in [-\epsilon, \epsilon], \; x \in D^6
\end{equation}
where $s$ is the parameter of $S^1$ representing $e^{i 2\pi s}$, and $\epsilon > 0$ is a small constant. Denote by $\rho_x: S^1 \to X$ the orbi-loop given by $\rho(-, x)$ based at $\rho(0, x)$. Let $\eta: U(r)^l \to \R$ be a smooth function that is invariant under the diagonally $SU(2)$-adjoint action, and $\nu \in \Omega^2(D^6)$ be a non-negative $2$-form supported near the center of $D^6$ so that $\int_{D^6} \nu = 1$. 

\begin{dfn}
Given $(\pmb{\rho}, \eta, \nu)$ as above, we define
\[
\begin{split}
\tau_{\pmb{\rho}, \eta}: \A_k & \longrightarrow \R \\
A & \longmapsto \int_{D^6} \eta \left(\Hol_{\rho_{1, x}}(A^{\dagger}), ..., \Hol_{\rho_{l, x}}(A^{\dagger})\right) \nu
\end{split}
\]
to be the associated cylinder function over $\A_k$. 
\end{dfn}

Now we derive the formal gradient of the function $\tau_{\pmb{\rho}, \eta}$. Denote the $i$-the partial derivative of the function $\eta$ by $\partial_i \eta: U(r)^l \to \mathfrak{u}(r)^*$, which arises as the restriction of $d\eta$ to the $i$-th factor $T^*U(r) \simeq \mathfrak{u}^*(r)$ of $T^*U(r)^l$. Due to the requirement that $\rho_i(0, x) = \rho_j(0, x)$, there is no ambiguity in writing $\Hol_{\pmb{\rho}, x}(A^{\dagger}) := \left(\Hol_{\rho_{1, x}}(A^{\dagger}), ..., \Hol_{\rho_{l, x}}(A^{\dagger}) \right)$. Then the differential of the cylinder function $\tau_{\pmb{\rho}, \eta}$ takes the form 
\begin{equation}\label{e4.2}
d\tau_{\pmb{\rho}, \eta}|_A (a) = \sum_{i=1}^l \int_{D^6} \partial_i \eta|_{\Hol_{\pmb{\rho}, x}(A^{\dagger})}\left(-\int_0^1 (\rho^*_i a)(s) ds \cdot \Hol_{\rho_i, x} (A^{\dagger}) \right) \nu, \quad a  \in \Omega^1(X, \g_P). 
\end{equation}
Note that $\mathfrak{u}(r) = \mathfrak{su}(r) \oplus i\R$. We can compose $\partial_i \eta$ with the restriction of $\mathfrak{u}(r)^*$ to $\mathfrak{su}(r)^*$ and identify $\mathfrak{su}(r)^*$ with $\mathfrak{su}(r)$ via the Killing form. The resulting operator is denoted by $(\partial_i \eta)^0_{\dagger}: U(r)^l \to \mathfrak{su}(r)$. The conjugation invariance of $\eta$ implies that $(\partial_i \eta)^0_{\dagger}(\Hol_{\pmb{\rho}}(A^{\dagger}))$ is a section of $\rho_i^*\g_P$ over $\{0 \} \times D^6$. This section can be extended to a section over $S^1 \times D^6$ via parallel transport of $A^{\dagger}$ along the $S^1$-factor, which might gain discontinuity over $\{ 0 \} \times D^6$ with jumps given by $\Hol_{\rho_i, x}(A^{\dagger})$-conjugation. Let's temporarily denote by $H_i$ this extended section of $\rho_i^*\g_P$ over $S^1 \times D^6$ (following the notation of \cite[Page 876]{KM11}). Then it follows from (\ref{e4.2}) that the gradient of $\tau_{\pmb{\rho}, \eta}$ is 
\begin{equation}\label{e4.3}
\grad \tau_{\pmb{\rho}, \eta} = \star_{\phi} \left(\sum_{i=1}^l (\rho_i)_* (H_i \otimes \nu) \right). 
\end{equation}
Although each single term $(\rho_i)_* (H_i \otimes \nu)$ might be discontinuous at $\{0\} \times D^6$, the diagonal conjugation invariance of $\eta$ implies that the sum on the RHS of (\ref{e4.3}) is at least continuous. We can actually get stronger estimates on the gradient of cylinder functions. 

\begin{lem}\label{l4.2}
Let $\tau$ be a cylinder function as above, and $a = A^{\dagger} - A^{\dagger}_0 \in L^2_k(X, \Lambda^1 \otimes \su(2))$ be the  connection form of a general connection $A^{\dagger} \in \A_k$. Then we have the following estimates.
\begin{enumerate}
\item There exists a constant $K_0$ so that 
\[
\|\grad \tau|_{A^{\dagger}}\|_{C^0} \leq K_0. 
\]
\item For each $1 \leq j \leq k$, there exists a constant $K_j$ so that 
\[
\|\grad \tau|_{A^{\dagger}}\|_{L^2_j} \leq K_j\left(1+ \|a\|^j_{L^2_j} \right). 
\]
\item For each positive integer $n \geq 1$ and $0 \leq j \leq k$, there exists a constant $K_{n,j}$ so that 
\[
\|D^n \grad\tau|_{A^{\dagger}}(a_1, ..., a_n)\|_{L^2_j} \leq K_{n, j}\left(1+\|a\|^j_{L^2_j} \right)\prod_{i=1}^n \|a_i\|_{L^2_j}.
\]
\end{enumerate}
All constants appeared above are independent of the connection $A^{\dagger}$. 
\end{lem}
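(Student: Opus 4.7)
The plan is to reduce all three estimates to standard facts about the smooth dependence of parallel transport on the connection parameter, combined with Sobolev multiplication, exploiting the fact that the loops $\rho_{i,x}$ have fixed length and the holonomies always live in the compact group $U(r)$.

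For (1), I would observe that each $H_i$ is obtained by $A^\dagger$-parallel transport along $S^1$ of the section $(\partial_i\eta)^0_\dagger\bigl(\Hol_{\pmb{\rho},x}(A^\dagger)\bigr)$. Since $\eta \in C^\infty(U(r)^l)$ and $U(r)^l$ is compact, $\|(\partial_i\eta)^0_\dagger\|_{C^0}$ is finite and \emph{independent of $A^\dagger$}; moreover parallel transport by a unitary connection is an isometry of the fibers, so $|H_i|$ is uniformly bounded by $\|(\partial_i\eta)^0_\dagger\|_{C^0}$ pointwise. The form $\nu$ and the immersions $\rho_i$ are fixed, so $\bigl\|(\rho_i)_*(H_i\otimes\nu)\bigr\|_{C^0}$ is bounded by a constant depending only on $\pmb{\rho}$, $\eta$, $\nu$ and the metric, giving the uniform $C^0$-estimate after summing and applying $\star_\phi$.

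For (2), I would differentiate $\grad\tau|_{A^\dagger}$ up to order $j$ with respect to the Levi--Civita connection and apply Leibniz. Each covariant derivative of $H_i$ decomposes into two contributions: (a) derivatives of $(\partial_i\eta)^0_\dagger$ evaluated at the holonomies, pulled back via the smooth map $x \mapsto \Hol_{\pmb{\rho},x}(A^\dagger)$, whose $x$-derivatives are controlled by spatial derivatives of $a$ through the parallel transport ODE, and (b) derivatives along the $S^1$-direction of the parallel-transported section, which produce copies of the connection form $a = A^\dagger - A^\dagger_0$ (plus a smooth background from $A_0^\dagger$). Distributing $j$ derivatives produces a finite sum of terms, each of the form $P(a,\nabla a,\dots,\nabla^{j-1}a)$ where $P$ is polynomial of degree at most $j$ with smooth coefficients of compact support. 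Applying Sobolev multiplication $L^2_k \hookrightarrow C^0$ in dimension $7$ (using $k \geq 4$), each monomial of degree $m \leq j$ is bounded in $L^2$ by $C\|a\|^m_{L^2_j} \leq C(1+\|a\|^j_{L^2_j})$, yielding the claimed bound.

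For (3), I would interpret $D^n \grad\tau$ as the $n$-th variational derivative along directions $a_1,\dots,a_n$. The classical Duhamel expansion for parallel transport gives
\[
D^n_A \Hol_{\rho_{i,x}}(a_1,\dots,a_n) \;=\; \sum_{\sigma \in S_n}\int_{0 < t_1 < \cdots < t_n < 1} \prod_{k=1}^n \Ad\!\bigl(\Hol_{[0,t_{\sigma(k)}]}(A^\dagger)\bigr)\rho_{i,x}^*a_{\sigma(k)}(t_{\sigma(k)})\,dt,
\]
so that $D^n H_i$ depends multilinearly on $a_1,\dots,a_n$. Combining this with the chain-rule expansion for $D^n[(\partial_i\eta)^0_\dagger \circ \Hol_{\pmb{\rho}}(A^\dagger)]$ and differentiating the resulting expression $j$ times covariantly (as in step (2)) produces a finite sum of terms, each of the form $(\text{multilinear in }a_1,\dots,a_n)\cdot(\text{polynomial of degree} \leq j \text{ in }a)$. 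Applying Sobolev multiplication once more bounds the multilinear part by $\prod_i \|a_i\|_{L^2_j}$ and the polynomial part by $(1+\|a\|^j_{L^2_j})$, giving (3).

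The main obstacle is purely bookkeeping: tracking the mixed spatial and variational derivatives of the $A^\dagger$-parametrized parallel transport in a way that keeps the polynomial degree in $a$ explicit. This is handled by working with the Duhamel expansion and exploiting that the loops have fixed geometry, so that every derivative either contributes a bounded factor from $\eta$ and the immersions $\rho_i$, or exactly one further factor of $a$ (possibly differentiated) controlled in $L^2_j$. No delicate analytic input is needed beyond smoothness of holonomy in the connection variable and the Sobolev multiplication theorem in dimension $7$ at regularity $k \geq 4$.
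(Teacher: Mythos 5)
Your overall route matches the paper's: part (1) via compactness of $U(r)^l$ and the fact that unitary parallel transport preserves the fiberwise norm (your justification here is in fact cleaner than the one in the text), and parts (2)--(3) by expanding the holonomy's dependence on the connection and closing with Sobolev multiplication. However, there is a genuine gap in how you propose to close the estimates in (2) and (3). First, invoking $L^2_k \hookrightarrow C^0$ with $k \geq 4$ produces a bound in terms of $\|a\|_{L^2_k}$, not the claimed $\|a\|_{L^2_j}$; the level-$j$ form of the estimate, with only the $L^2_j$-norm of $a$ on the right, is exactly what is needed later in the bootstrapping step of the compactness argument (where a priori one only controls $A^{\dagger}-A^{\dagger}_0$ in a low-regularity norm), so this substitution is not harmless. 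Second, the intermediate claim that a monomial of degree $m \leq j$ in $a, \nabla a, \dots, \nabla^{j-1}a$ is bounded in $L^2$ by $C\|a\|^m_{L^2_j}$ is false in dimension $7$ without further structure: already for $j=2$ the product $(\nabla a)^2$ need not lie in $L^2$ when $a \in L^2_2$, since $L^2_1 \hookrightarrow L^{14/5}$ and $14/5 < 4$.

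What rescues the estimate --- and what the paper's proof actually uses --- is the special structure of the holonomy functional. Pulling back to $S^1 \times D^6$ and writing $\rho^*a = b(s) + \beta(s)\,ds$, only the circle component $\beta$ enters; the group-valued factors $\exp(\pm\int_0^s \beta\,dt)$ are uniformly bounded regardless of $a$; each differentiation of $H$ either produces one new (underived) factor of $\beta$ or raises the derivative order of an existing factor by one, so in every resulting monomial the \emph{sum} of the degree and the total derivative order is at most $j$ (not ``degree $\leq j$ and order $\leq j$ separately''); and the circle averages $\int_0^s \nabla_x^i\beta\,dt$ obey the integration-by-parts bound \eqref{e4.7}. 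With that counting, H\"older together with the embeddings $L^2_{j-i} \hookrightarrow L^q$, $1/q = 1/2 - (j-i)/7$, closes every monomial in $L^2$ with the constant $\|a\|^m_{L^2_j}$, $m \leq j$, and the same bookkeeping applied to the Duhamel expansion yields (3). Your proposal gestures at the parallel-transport ODE and the Duhamel formula, so the raw material is present, but the combinatorial count of factors versus derivatives and the replacement of the $C^0$-embedding by level-$j$ H\"older--Sobolev estimates are the missing steps that make the stated inequality true as written.
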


\begin{proof}
For simplicity, we consider the case when there is a single smooth immersion $\rho: S^1 \times D^6 \to X$. The argument for general cases is almost identical except for extra complexity in notations. Now we assume the cylinder function takes the form $\tau (A) = \int_{D^6} \eta(\Hol_{\rho, x}(A^{\dagger})) \nu$ so that 
\[
\grad \tau = \star_{\phi} \rho_*(H \otimes \nu). 
\]
Let's write $A^c=\rho^*A^{\dagger}$ and $A^c_0 = \rho^*A^{\dagger}_0$ for the pulled-back connections over $S^1 \times D^6$, and $a^c=A^c - A^c_0$ the corresponding connection form. We can further decompose $a^c = b(s) + \beta(s)ds$, where $b(s)$ is an $S^1$-family of $\mathfrak{su}(2)$-valued $1$-forms over $D^6$, and $\beta(s)$ is an $S^1$-family of $\mathfrak{su}(2)$-valued $0$-forms over $D^6$. 

(1) Since the Lie group $U(r)$ is compact, we know $(d\eta)^0_{\dagger}(\Hol_{\rho}(A^{\dagger}))$ is bounded in $C^0$ by a constant that is independent of $A^{\dagger}$. Note that the section $H$ is obtained from $(d\eta)^0_{\dagger}(\Hol_{\rho}(A^{\dagger}))$ by parallel transporting along the $S^1$-direction of $S^1 \times D^6$. Since the space of $U(r)$-connections over $S^1$ is compact modulo gauge transformations, the gauge-invariant section $(d\eta)^0_{\dagger}(\Hol_{\rho}(A^{\dagger}))$ gives rise a uniform $C^0$-bound on $H$. 

(2) The constants in the statement will all depend on the reference connection $A^{\dagger}_0$. So we may assume $\rho^*A^{\dagger}_0$ is the trivial connection over $S^1 \times D^6$ to simplify notations. Then we know $\Hol_{\rho}(A^{\dagger}) = \exp(-\int^1_0 \beta(s) ds)$. From the construction of $H$ and gauge-invariance of $\eta$, we know 
\begin{equation}\label{e4.4.1}
\begin{split}
H|_{(s, x)} & =\exp(-\int^s_0 \beta(t) dt) \cdot (d\eta)^0_{\dagger}(\Hol_{\rho, x}(A^{\dagger})) \cdot \exp(\int^s_0 \beta(t) dt) \\
& =  (d\eta)^0_{\dagger}\left(\exp(-\int^s_0 \beta(t) dt) \cdot \exp(-\int^1_0 \beta(s) ds) \cdot \exp(\int^1_0 \beta(s) ds) \right).
\end{split}
\end{equation}
So the derivatives of $H$ along the $S^1$-direction involves the composition of derivatives of $(d\eta)^0_{\dagger}$ with products among the terms
\begin{equation}\label{e4.4}
\partial^i_s \beta(s), \quad \exp(-\int^s_0 \beta(t) dt), \quad \exp(\int^1_0 \beta(s) ds),
\end{equation}
where $\partial^i_s$ means to take the $i$-th derivative along the $S^1$-direction. 
The derivatives of $H$ along the $D^6$-direction involves the composition of derivatives of $(d\eta)^0_{\dagger}$ with products among the terms
\begin{equation}\label{e4.5}
\int_0^s \nabla^i_x \beta(t)dt, \quad \int_0^1 \nabla^k_x \beta(t)dt, \quad \exp(-\int^s_0 \beta(t) dt), \quad \exp(\int^1_0 \beta(s) ds),
\end{equation}
where $\nabla^i_x$ means to take the $i$-th derivative along the $D^6$-direction. Recall that $a^c = \rho^*A^{\dagger} - \rho^*A^{\dagger}_0$ is the connection form on $S^1 \times D^6$. It's clear that 
\[
\|\partial^i_s \beta(s)\|_{L^2(S^1 \times D^6)} \leq const. \|a^c\|_{L^2_i(S^1 \times D^6)}
\]
As for the $L^2$-norm of $\int^s_0 \nabla^i_x \beta(t)dt$, we first apply integration by parts to get the following estimates:
\begin{equation}\label{e4.7}
\begin{split}
\int_0^1 \left|\int^s_0\nabla^i_x \beta(t) dt\right|^2 ds & \leq \int_0^1 s \int^s_0 |\nabla^i_x \beta(t)|^2 dt ds \\
& \leq \frac{1}{2} \left( \int^1_0 |\nabla^i_x \beta(t)|^2 dt - \int^1_0 s  |\nabla^i_x \beta(s)|^2 ds \right) \\
& \leq \int^1_0 |\nabla^i_x \beta(t)|^2 dt. 
\end{split}
\end{equation}
Thus we conclude that 
\begin{equation}
\left\| \int^s_0 \nabla^i_x \beta(t)dt\right\|_{L^2(S^1 \times D^6)} \leq \|\nabla^u_x\beta(s)\|_{L^2(S^1 \times D^6)} \leq const. \|a^c\|_{L^2_i(S^1 \times D^6)}. 
\end{equation}
Since the $j$-th derivative of $H$ is given by the composition of derivatives of $(d\eta)^0_{\dagger}$ with products of terms in (\ref{e4.4}) and (\ref{e4.5}) so that in each product the total order of derivatives add up to $j$, we conclude that 
\[
\|H\|_{L^2(S^1 \times D^6)} \leq const. \left(1 + \|a^c\|^j_{L^2_j(S^1 \times D^6)} \right),
\] 
which further implies the claimed estimate 
\[
\|\star_{\phi}\rho_*(H\otimes \nu)\|_{L^2_j} \leq const. \left(1 + \|a\|^j_{L^2_j} \right). 
\]

(3) Once we know the expression of $H$ in (\ref{e4.4.1}), the proof is almost the same as that of \cite[Proposition 3.2]{M20}. The only difference comes from the adjoint action of $\exp(\int^s_0\beta(t)dt)$, whereas in \cite[Proposition 3.2]{M20} one does not need to apply further parallel transport. This extra complication can dealt with as above using the estimate (\ref{e4.7}). 
\end{proof}

\autoref{l4.2} tells us that the gradient of a cylinder function defines a map 
\[
\grad \tau: \A_k \longrightarrow L^2_k(X, \Lambda^1 \otimes \g_P).
\]
Furthermore, this map is smooth due to (3) of \autoref{l4.2}. Now we can proceed to construct a Banach space of perturbations. 

\begin{dfn}\label{d4.3}
Fix a countable family of cylinder functions $\pmb{\tau} = \{\tau\}_{\alpha \in \mathbb{N}}$ satisfying the following assumptions.
\begin{enumerate}
\item For each integer $l > 0$, one can find a sub-family of $l$-tuples of smooth immersions $\{\pmb{\rho}_{\alpha}\}$ associated to $\{\tau_{\alpha}\}$ that are dense in the space of all $l$-tuples of smooth immersions satisfying (\ref{e4.1}) with respect to $C^1$-topology. 

\item For each integer $l > 0$, one can find a sub-family of smooth $SU(2)$-invariant functions $\eta_{\alpha}: U(r)^l \to \R$ associated to $\tau_{\alpha}$ that are dense in the space of all smooth $SU(2)$-invariant functions on $U(r)^l$ with respect to $C^{\infty}$-topology. 
\end{enumerate}
Then we fix $C_{\alpha}:= \sup \{K_{i, \alpha}: 0 \leq i \leq \alpha\}$, wehre $K_{i, \alpha}$ is the constant appeared in \autoref{l4.2} associated to the cylinder function $\tau_{\alpha}$. We define
\[
\mathscr{P}:= \left\{\pmb{\pi} = \{\pi_{\alpha}\}_{\alpha \in \mathbb{N}}: \pi_{\alpha} \in \R, \; \; \sum_{\alpha} C_{\alpha} |\pi_{\alpha}| < \infty \right\}
\]
to be the space of sequences $\pmb{\pi} = \{\pi_{\alpha}\}$ of finite norm $\|\pmb{\pi}\|:= \sum_{\alpha} C_{\alpha} |\pi_{\alpha}|$. We refer to $\mathscr{P}$ as the space of holonomy perturbations. 
\end{dfn}

Due to the construction of $\mathscr{P}$, the sum $\tau_{\pmb{\pi}}:= \sum_{\alpha} \pi_{\alpha}\tau_{\alpha}$ defines a smooth function on $\A_k$, which we use to perturb the Chern--Simons functional $\cs_{\phi}$:
\[
\cs_{\phi, \pmb{\pi}}:= \cs_{\phi} + \tau_{\pmb{\pi}}: \A_k \to \R. 
\]
Then we define the perturbed $G_2$-instanton equation to be the vanishing equation of the gradient of the perturbed functional: 
\begin{equation}\label{e4.9}
\grad \cs_{\phi, \pmb{\pi}}(A) = 0 \Longleftrightarrow F_A \wedge \psi + \sigma_{\pmb{\pi}}(A) = 0,
\end{equation}
where $\sigma_{\pmb{\pi}}(A) = \sum_{\alpha} \pi_{\alpha} \left(\sum_{i=1}^{l_{\alpha}} (\rho_{\alpha, i})_* (H_{\alpha, i} \otimes \nu) \right)$ with $\pmb{\rho}_{\alpha} = (\rho_{\alpha, 1}, ..., \rho_{\alpha, l_{\alpha}})$.

\subsection{\em Transversality} \label{ss4.2} \hfill

\vspace{3mm}

In this subsection, we establish the transversality of the moduli space of perturbed irreducible $G_2$-instantons. Following the notations in \cite{KM07}, we write $\mathcal{T}_j$ for the tangent bundle of $\A_k$ completed with respect to the $L^2_j$-norm for $j \leq k$, which is identified with $\A_k \times L^2_j(X, \Lambda^1 \otimes \g_P)$. The infinitesimal action of the gauge group $\G_{k+1}$ decomposes the tangent bundle into two pieces:
\begin{equation}
\mathcal{T}_j := \mathcal{J}_j \oplus \mathcal{K}_j,
\end{equation}
where $\mathcal{J}_j|_A = \im d_A$ is the $L^2_j$-tangent space of the gauge orbit at $A$, and $\mathcal{K}_j|_A = \ker d^*_A$ is the $L^2$-orthogonal complement. The local gauge invariant of $\cs_{\phi, \pmb{\pi}}$ implies that its gradient $\grad \cs_{\phi, \pmb{\pi}}$ is a section of $\mathcal{K}_{k-1}$. One can also argue directly by writing $\grad \cs_{\phi, \pmb{\pi}}(A) = \star_{\phi} (F_A \wedge \psi + \sigma_{\pmb{\pi}}(A))$ and compute that $d_A \sigma_{\pmb{\pi}}(A) = 0$.

We consider the smooth map 
\begin{equation}\label{e4.10}
\begin{split}
\mathcal{F}: \mathscr{P} \times \A^*_k & \longrightarrow \mathcal{K}_{k-1} \\
(\pmb{\pi}, A) & \longmapsto \grad \cs_{\phi, \pmb{\pi}}(A) 
\end{split}
\end{equation}
The essence of the transversality problem is to show that the map $\mathcal{F}$  is transverse to the zero section of $\mathcal{K}_{k-1}$. To do so, we introduce the Hessian 
\begin{equation}
\begin{split}
\Hess_{A, \pmb{\pi}}: \mathcal{K}_k & \longrightarrow \mathcal{K}_{k-1} \\
a & \longmapsto \Pi_{\mathcal{K}} (D \grad \cs_{\phi, \pmb{\pi}}|_A a),
\end{split}
\end{equation}
where $\Pi_{\mathcal{K}}: \mathcal{T}_{k-1} \to \mathcal{K}_{k-1}$ is the fiberwise $L^2$-orthogonal projection. When $A$ is a critical point of $\cs_{\phi, \pmb{\pi}}$, one sees that $\Hess_{A, \pmb{\pi}}$ is a self-adjoint Fredholm map from the argument in \cite[Proposition 12.3.1]{KM07}. 

\begin{lem}\label{l4.4}
The map $\mathcal{F}$ defined in \eqref{e4.10} is transverse to the zero section of $\mathcal{K}_{k-1}$. 
\end{lem}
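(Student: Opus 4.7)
The approach is the standard Sard--Smale transversality argument, in the spirit of Donaldson's four-dimensional treatment \cite{D87} and Floer's in dimension three \cite{F88}. The plan is to show that at every zero $(\pmb{\pi}_0, A_0)$ of $\mathcal{F}$, the linearization
\[
D\mathcal{F}|_{(\pmb{\pi}_0, A_0)}(\pmb{\delta\pi}, a) = \Hess_{A_0, \pmb{\pi}_0}(a) + \sum_{\alpha} \delta\pi_\alpha\,\grad\tau_\alpha|_{A_0}
\]
is surjective onto $\mathcal{K}_{k-1}|_{A_0}$, where this decomposition uses that each $\grad\tau_\alpha|_{A_0}$ already lies in $\mathcal{K}$ by gauge-invariance of $\tau_\alpha$. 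The Hessian is self-adjoint Fredholm of index zero: its principal symbol is that of the elliptic operator $L_{A_0}$ on the gauge-fixed slice $\mathcal{K}_k$, and $D\sigma_{\pmb{\pi}_0}$ contributes only a bounded lower-order perturbation by \autoref{l4.2}. Hence $\cok \Hess_{A_0, \pmb{\pi}_0} \cong \ker \Hess_{A_0, \pmb{\pi}_0}$ via the $L^2$-pairing, and surjectivity reduces to showing that any $v \in \ker\Hess_{A_0, \pmb{\pi}_0}$ which is $L^2$-orthogonal to every $\grad\tau_\alpha|_{A_0}$ must vanish identically.

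Given such $v$, elliptic regularity places $v$ in $C^\infty$, and the density stipulations in \autoref{d4.3} promote the hypothesis to $L^2$-orthogonality of $v$ against $\grad\tau|_{A_0}$ for \emph{every} cylinder function $\tau$. From here I would argue $v \equiv 0$ by pointwise evaluation on the regular locus $X \setminus S_X$. Fix $x_0 \in X \setminus S_X$, a tangent direction $e \in T_{x_0}X$, and an element $\xi_0 \in \g_P|_{x_0}$. Construct a single-loop immersion $\rho: S^1 \times D^6 \to X$ for which $\rho(-,0)$ is a short embedded loop based at $x_0$ with initial tangent $e$ and $\rho(-,x)$ is a transverse translate; if needed, adjoin auxiliary loops so that the combined holonomy sits at a point of $U(r)^l$ where $\Ad$-invariant smooth functions have enough first-derivative freedom to prescribe arbitrary Lie-algebra weights at $x_0$. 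With $\nu$ concentrating at $x = 0$ and a suitable invariant $\eta$, the linearization formula
\[
D\Hol_{\rho_x}|_{A_0}(v) = -\int_0^1 \Ad(h_s^{-1})\bigl(\rho_x^* v\bigr)(s)\,ds \cdot \Hol_{\rho_x}(A^{\dagger}_0)
\]
reduces the pairing $\langle v, \grad\tau|_{A_0}\rangle_{L^2}$, to leading order in the loop length, to a prescribed multiple of $\langle v(x_0)(e), \xi_0\rangle$. Its vanishing for all $e$ and $\xi_0$ forces $v(x_0) = 0$; since $x_0$ ranges over an open dense set and $v$ is continuous, $v \equiv 0$ on $X$.

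The hard part is precisely this last step: extracting the pointwise value $v(x_0)$ using only conjugation-invariant information about holonomies. The auxiliary loops, combined with the standard representation-theoretic fact that $\Ad$-invariant smooth functions on $U(r)^l$ separate generic adjoint orbits and their differentials span the relevant cotangent directions, are what make the pointwise evaluation possible. Everything else --- the Fredholm theory for $\Hess$, smoothness of $\mathcal{F}$, and density of $\{\tau_\alpha\}$ among cylinder functions --- is routine Kuranishi-style bookkeeping afforded by the setup of \autoref{ss4.1}.
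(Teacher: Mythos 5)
Your reduction is the same as the paper's: identify $\cok\Hess_{A,\pmb{\pi}}$ with $\ker\Hess_{A,\pmb{\pi}}$ by self-adjointness, observe that the perturbation gradients lie in $\mathcal{K}$ by gauge invariance, and reduce surjectivity of $D\mathcal{F}$ to showing that no nonzero $a\in\ker\Hess_{A,\pmb{\pi}}$ can be $L^2$-orthogonal to $\grad\tau_{\pmb{\omicron}}|_A$ for every $\pmb{\omicron}\in\mathscr{P}$. Where you genuinely diverge is the final step. You argue locally and infinitesimally: short loops through a nonsingular point $x_0$ with prescribed tangent $e$, augmented by auxiliary loops so that the holonomy tuple is irreducible, so that differentials of invariant functions detect $\langle a(x_0)(e),\xi_0\rangle$ and force $a(x_0)=0$ on the dense regular locus. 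The paper argues globally and by contradiction: if $D\tau_{\pmb{\omicron}}|_A(a)=0$ for all $\pmb{\omicron}$, it passes to the connections $A_\epsilon=A+\epsilon a$, uses density of the $\eta_\alpha$ to conclude that all traces of words in holonomies of $A$ and $A_\epsilon$ agree, invokes \cite[Lemma 5.13]{D02} to conjugate the holonomies, and then uses density of the immersions and irreducibility to conclude $[A]=[A_\epsilon]$, contradicting $0\neq a\in\mathcal{K}_k$. Your route buys a sharper, pointwise statement (the cylinder-function gradients span the fibre of $\mathcal{K}$ over the regular locus) and, notably, it sidesteps the paper's passage from the first-order vanishing $D\tau_{\pmb{\omicron}}|_A(a)=0$ to the equality $\tau_{\pmb{\omicron}}(A)=\tau_{\pmb{\omicron}}(A_\epsilon)$, which the paper asserts rather than derives. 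The cost is that your decisive step is only sketched: you still owe (i) the Schwarz-type fact that at an irreducible tuple $(h_1,\dots,h_l)\in U(r)^l$ the differentials of smooth conjugation-invariant functions span the entire annihilator of the diagonal adjoint-orbit directions in $\su(r)^l$, and (ii) the check that this annihilator surjects onto the $\su(r)^*$-factor of the distinguished short loop once the auxiliary holonomies already generate an irreducible subgroup --- this is precisely where irreducibility of $A$ enters your argument, playing the role it plays at the end of the paper's. Both ingredients are standard (compare the treatment in \cite{KM11}, whose notation for the sections $H_i$ the paper already borrows), so I regard your proposal as a correct alternative proof, incomplete only at the representation-theoretic spanning step you yourself flag as the hard part.
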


\begin{proof}
Let $(\pmb{\pi}, A) \in\mathcal{F}^{-1}(0)$ be an element in the zero set. The derivative of $\mathcal{F}$ at $(\pmb{\pi}, A)$ is 
\begin{equation}
D\mathcal{F}|_{(\pmb{\pi}, A)} (\pmb{\omicron}, a) = \Hess_{A, \pmb{\pi}} a +\star_{\phi} \sigma_{\pmb{\omicron}}(A),
\end{equation}
where $(\pmb{\omicron}, a) \in \mathscr{P} \times \mathcal{K}_k$. Since $\Hess_{A, \pmb{\pi}}$ is self-adjoint with respect to the $L^2$-inner product, we can identify the cokernel of $\Hess_{A, \pmb{\pi}}$ with $\ker \Hess_{A, \pmb{\pi}}$. Then it suffices to show that for each non-zero $a  \in \ker \Hess_{A, \pmb{\pi}}$, one can find $\pmb{\omicron} \in \mathscr{P}$ such that $\langle a, \star_{\phi} \sigma_{\pmb{\omicron}}(A) \rangle_{L^2} \neq 0$. 

Note that $\star_{\phi} \sigma_{\pmb{\omicron}}(A) = \grad \tau_{\pmb{\omicron}}$. We see that $\langle a, \star_{\phi} \sigma_{\pmb{\omicron}}(A) \rangle_{L^2}  = D\tau_{\pmb{\omicron}}|_A  (a)$. Suppose, on the contrary, there is an element $a  \in \ker \Hess_{A, \pmb{\pi}}$ such that $D\tau_{\pmb{\omicron}}|_A  (a) = 0$ for all $\pmb{\omicron}  \in \mathscr{P}$. Then for $A_{\epsilon} = A  + \epsilon a$ with $\epsilon  > 0$ sufficiently small, we have 
\[
\tau_{\pmb{\omicron}}(A)  = \tau_{\pmb{\omicron}}(A_{\epsilon}) \text{ and }  [A] \neq [A_{\epsilon}] \in \B^*_k = \A^*_k /\G_{k+1}.
\]
However, this is impossible for the following reason. Recall each term in $\tau_{\pmb{\omicron}}(A)$ is of the form $\int_{D^6} \eta_{\alpha}(\Hol_{\pmb{\rho}_{\alpha}}(A^{\dagger})) \nu$. Since the functions $\eta_{\alpha}$'s are chosen to be dense in the space of all $SU(2)$-invariant functions as in (2) of \autoref{d4.3}, we know for all words $W$ of a fixed length sufficiently large, say $L$, one has 
\[
\tr\left(W\left(\prod_{i=1}^{L}  \Hol_{\rho_i}(A^{\dagger})\right)\right) = \tr\left( W\left(\prod_{i=1}^{L}  \Hol_{\rho_i}(A_{\epsilon}^{\dagger})\right)\right)
\]
for all $L$-tuple immersions $\pmb{\rho} = (\rho_1, ..., \rho_L)$. Then \cite[Lemma 5.13]{D02} provides us with $g \in U(r)$ such that $g \Hol_{\rho_i}(A^{\dagger}) g^{-1}  =  \Hol_{\rho_i}(A_{\epsilon}^{\dagger})$. Since $A^{\dagger}$ and $A^{\dagger}_{\epsilon}$ has the same determinant, we know $g \in SU(r)$. Recall (1) in \autoref{d4.3} that the immersions $\rho_i$ are dense in the space of smooth immersions. Since both $A$ and $A_{\epsilon}$ are irreducible, we conclude that $A$ and $A_{\epsilon}$ are gauge equivalent, which is a contradiction. 
\end{proof}

\autoref{l4.4} has the following immediate consequence using the Sard--Smale theorem and the standard transversality argument in differential topology (c.f. \cite[Lemma 12.5.1]{KM07}). 

\begin{cor}
With respect to a generic perturbation $\pmb{\pi} \in \mathscr{P}$, every irreducible $\pmb{\pi}$-perturbed $G_2$-instanton is regular in the sense of \autoref{d3.7}. 
\end{cor}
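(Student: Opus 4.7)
The plan is to deduce this from Lemma 4.4 via the standard parametrized transversality (Sard--Smale) machinery, closely following \cite[\S 12.5]{KM07}. First, I would consider the universal parametrized moduli space
\[
\widetilde{\mathcal{M}}^* := \mathcal{F}^{-1}(0) \cap \bigl(\mathscr{P} \times \A^*_k\bigr),
\]
and its gauge quotient $\mathcal{M}^* := \widetilde{\mathcal{M}}^*/\G_{k+1}$. By \autoref{l4.4}, $\mathcal{F}$ is transverse to the zero section of $\mathcal{K}_{k-1}$ along $\A^*_k$, so $\widetilde{\mathcal{M}}^*$ is a smooth Banach submanifold of $\mathscr{P} \times \A^*_k$. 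Since on the irreducible locus the $\G_{k+1}$-action is free modulo the central subgroup $Z(r)$, the quotient $\mathcal{M}^*$ inherits the structure of a smooth Banach manifold.

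Next, I would study the projection
\[
\pr_{\mathscr{P}} : \mathcal{M}^* \longrightarrow \mathscr{P}, \qquad (\pmb{\pi}, [A]) \longmapsto \pmb{\pi}.
\]
The fiber over $\pmb{\pi}$ is precisely $\M^*_{\phi, \pmb{\pi}}(X, P^{\dagger})$, and the linearization of $\pr_{\mathscr{P}}$ at $(\pmb{\pi}, [A])$ has kernel and cokernel canonically identified with those of $\Hess_{A, \pmb{\pi}} : \mathcal{K}_k \to \mathcal{K}_{k-1}$. Since $\Hess_{A, \pmb{\pi}}$ is a self-adjoint Fredholm operator (as noted before \autoref{l4.4}), $\pr_{\mathscr{P}}$ is Fredholm of index zero. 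The Banach space $\mathscr{P}$ is separable by construction (\autoref{d4.3}), and $\mathcal{M}^*$ is second-countable, so the Sard--Smale theorem applies and the set of regular values of $\pr_{\mathscr{P}}$ is comeager in $\mathscr{P}$.

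Finally, I would translate regularity of $\pmb{\pi}$ as a value of $\pr_{\mathscr{P}}$ into the statement of the corollary. A perturbation $\pmb{\pi}$ being a regular value means that for every $[A] \in \M^*_{\phi, \pmb{\pi}}(X, P^{\dagger})$, the Hessian $\Hess_{A, \pmb{\pi}}$ is surjective; combined with the ellipticity of the deformation complex, this forces $H^2(E_A(X)) = 0$, which is exactly the regularity condition of \autoref{d3.7}. The only subtle point to check is that the perturbed deformation complex at $A$ still has a cohomological interpretation matching $E_A(X)$, which follows because $\sigma_{\pmb{\pi}}$ is gauge equivariant and its linearization $D\sigma_{\pmb{\pi}}|_A$ is a smoothing operator that is $d_A$-closed on both sides (as $\grad \cs_{\phi, \pmb{\pi}}$ takes values in $\ker d_A$). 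The main (mild) obstacle is verifying that $\pr_{\mathscr{P}}$ satisfies the hypotheses of Sard--Smale — i.e. that $\mathcal{M}^*$ is a smooth Lindelöf Banach manifold and $\pr_{\mathscr{P}}$ is $C^q$ with $q$ larger than the index, both standard consequences of the smoothness statements in \autoref{l4.2}(3).
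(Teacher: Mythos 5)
Your proposal is correct and is essentially the argument the paper intends: the paper derives the corollary from \autoref{l4.4} by citing the standard Sard--Smale parametrized transversality argument of \cite[Lemma 12.5.1]{KM07}, which is exactly the universal-moduli-space-and-index-zero-projection scheme you spell out. Your additional remarks (self-adjointness of the Hessian giving index zero, and the identification of regularity of the perturbed deformation complex with $H^2(E_A(X))=0$) correctly fill in the details the paper leaves implicit.
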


\section{\large \bf Compactness of Moduli Spaces}\label{s5}

Due to the work of Uhlenbeck \cite{U82}, the moduli space of $G_2$-instantons is known to be non-compact for general bundles. In contrast to ASD connections in dimension $4$, a compactification of such moduli spaces is still far from commencement due to the wild behavior of the bubbling off subsets. Nevertheless, we are able to obtain the compactness of the moduli space of $G_2$-instantons over projectively flat bundles.

\begin{thm}\label{t5.1}
Let $P^{\dagger}$ be a projectively flat principal $U(n)$-bundle over a $G_2$-orbifold $(X, \phi)$. There exists a constant $\epsilon_0 > 0$ such that the moduli space $\M_{\phi,\pmb{\pi}}(X, P^{\dagger})$ of perturbed $G_2$-instantons is compact with respect to all $\pmb{\pi} \in \mathscr{P}$ satisfying $\|\pmb{\pi}\| < \epsilon_0$. 
\end{thm}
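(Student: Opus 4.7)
The plan is to obtain uniform $C^{\infty}$ bounds on gauge representatives of $[A]\in\M_{\phi,\pmb{\pi}}(X,P^{\dagger})$ in local Coulomb gauges and then run the standard Uhlenbeck patching argument to extract a smooth limit. Everything hinges on an a priori $L^2$ curvature bound that goes to zero with $\|\pmb{\pi}\|$, which will follow from the projectively flat hypothesis combined with \autoref{l4.2}.

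The first step is the curvature bound. For any $A\in\A_k$, identity (2.7) applied component-wise together with the Chern--Weil formula yields
\begin{equation*}
\int_X \bigl(2|\pi_7(F_A)|^2 - |\pi_{14}(F_A)|^2\bigr)\vol \;=\; -2r\int_X \tr(F_A\wedge F_A)\wedge\phi \;=\; 8\pi^2\langle p_1(\g_P)\smile[\phi],[X]\rangle.
\end{equation*}
Projective flatness says $p_1(\g_P)=(r-1)c_1^2(P^{\dagger})-2rc_2(P^{\dagger})=0$, so the right hand side vanishes and $\|\pi_{14}(F_A)\|_{L^2}^2 = 2\|\pi_7(F_A)\|_{L^2}^2$, whence $\|F_A\|_{L^2}^2 = 3\|\pi_7(F_A)\|_{L^2}^2$. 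The perturbed equation reads $F_A\wedge\psi = -\sigma_{\pmb{\pi}}(A)$, and the fiberwise linear map $\cdot\wedge\psi\colon\Lambda^2\to\Lambda^6$ has kernel $\Lambda^2_{14}$ and is injective on the $7$-dimensional subspace $\Lambda^2_7$; its inverse on the image is a bounded fiberwise operator whose norm depends only on $\phi$. Combined with the uniform $C^0$-estimate on $\sigma_{\pmb{\pi}}$ from part~(1) of \autoref{l4.2} (which gives $\|\sigma_{\pmb{\pi}}(A)\|_{C^0}\leq\|\pmb{\pi}\|$ independently of $A$), this yields $|\pi_7(F_A)|\leq C\|\pmb{\pi}\|$ pointwise, and hence $\|F_A\|_{L^2}\leq C'\|\pmb{\pi}\|$.

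With a global $L^2$ bound in hand, the next step is an $\epsilon$-regularity argument. For Yang--Mills-type equations in higher dimensions the Price/Tian monotonicity formula provides a threshold $\epsilon_1>0$ such that if $\int_{B_r}|F_A|^2<\epsilon_1$ then $\sup_{B_{r/2}}|F_A|\leq Cr^{-2}(\int_{B_r}|F_A|^2)^{1/2}$. The perturbation term $\sigma_{\pmb{\pi}}$ is smooth and tamely bounded by \autoref{l4.2}, so the same estimate persists in a perturbed form. Passing to a local orbifold cover $(\tilde U_{\alpha},G_{\alpha})$ and working equivariantly delivers the corresponding statement near orbifold points. Choosing $\epsilon_0>0$ with $(C')^2\epsilon_0^2$ strictly below $\epsilon_1$ forces the smallness condition globally on balls of fixed radius, producing a uniform pointwise bound on $|F_A|$ across $X$. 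Once $\|F_A\|_{L^{\infty}}$ is uniformly bounded, Uhlenbeck's local gauge-fixing theorem, applied $G_{\alpha}$-equivariantly in its determinant-preserving form, produces Coulomb representatives of $A$ with uniform $L^2_1$ bounds on each chart. In Coulomb gauge the pair consisting of the perturbed equation and the gauge-fixing condition is elliptic in $A$; using parts~(2) and (3) of \autoref{l4.2} to absorb the $\sigma_{\pmb{\pi}}$-terms, an elliptic bootstrap gives uniform $L^2_j$ bounds on each chart for all $j\leq k+1$. A diagonal subsequence and the standard patching argument then glue the local Coulomb representatives into a global element of $\G_{k+1}$ and produce a limit connection in $\A_k$ satisfying the perturbed equation, which descends to a limit point of $\M_{\phi,\pmb{\pi}}(X,P^{\dagger})$.

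The essential obstacle is that generic $G_2$-instanton moduli spaces are noncompact because curvature can concentrate along associative loci; this is sidestepped here because the estimate $\|F_A\|_{L^2}^2\leq(C')^2\|\pmb{\pi}\|^2$ can be forced below any fixed threshold by shrinking $\|\pmb{\pi}\|$, precluding bubbling. The remaining subtlety is making the $\epsilon$-regularity and the Uhlenbeck lemma interact cleanly with the orbifold singularities, which amounts to keeping careful track of the $G_{\alpha}$-actions on lifts and verifying that the usual constants can be chosen uniformly over the finitely many chart types; this is routine given the classification of finite subgroups of $G_2$ recalled in \autoref{ss2.2}.
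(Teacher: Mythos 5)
Your overall architecture --- an a priori $L^2$ curvature bound from projective flatness plus the $C^0$ bound on $\sigma_{\pmb{\pi}}$, then local $\epsilon$-regularity, then Uhlenbeck gauge fixing, elliptic bootstrap and patching --- is exactly the paper's, and your first step (the identity $\|F_A\|^2_{L^2}=3\|\pi_7(F_A)\|^2_{L^2}$ from the Chern--Weil vanishing, together with the pointwise bound $|\pi_7(F_A)|\le C\|\pmb{\pi}\|$ via the injectivity of $\cdot\wedge\psi$ on $\Lambda^2_7$) is correct and identical to what the paper does. The endgame (Coulomb gauges, bootstrap, diagonal subsequence, gauge patching) also matches.

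The gap is in the middle step, where you outsource $\epsilon$-regularity to the Price/Tian machinery and assert that ``the same estimate persists in a perturbed form.'' That machinery is built for Yang--Mills connections: it uses $d_A^*F_A=0$ both in the monotonicity formula and in the Bochner/Moser iteration behind the pointwise curvature bound. A $\pmb{\pi}$-perturbed $G_2$-instanton is not Yang--Mills: from $\star F_A = 3\star\pi_7(F_A)-\phi\wedge F_A$, the Bianchi identity, and $d\phi=0$ one gets $d_A^* F_A = \pm 3\star d_A\star\pi_7(F_A)$, which involves first covariant derivatives of the holonomy perturbation. By \autoref{l4.2}(2) those derivatives are bounded only in terms of $\|A^{\dagger}-A^{\dagger}_0\|_{L^2_1}$, which is precisely the quantity you have not yet controlled at this stage --- you control it only after gauge fixing, which in turn needs the $L^{7/2}$ curvature bound you are trying to produce. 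So the claim that the perturbed equation satisfies the hypotheses of the Yang--Mills $\epsilon$-regularity theorem is circular as stated. The paper breaks this circle with two devices that use only $\pi_7(F_A)$ itself and none of its derivatives: \autoref{l5.4} is a monotonicity inequality valid for \emph{arbitrary} connections, with error term $\mathfrak{c}_3\|\pi_7(F_A)\|^2_{L^{7/2}(\mathbb{B})}$, obtained from the cone identity $d\iota_{\nu}\phi_0=\phi_0$ and the algebraic identity \eqref{e2.7} rather than from the Yang--Mills equation; and \autoref{l5.3} replaces the second-order Bochner argument with the first-order overdetermined-elliptic operator $d^*\oplus(\pi_7\circ d)$ acting on the connection form in Coulomb gauge, so that the equation enters only through $\|\pi_7(F_A)\|_{L^q}$. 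Derivatives of $\sigma_{\pmb{\pi}}$ appear only in the later recursive bootstrap (\autoref{l5.7} and the iteration in the proof of \autoref{t5.1}), after local Coulomb gauges with small $L^{7/2}_1$ connection forms are already in hand. To repair your argument you would either need to reprove $\epsilon$-regularity for the first-order perturbed equation along these lines, or supply an independent a priori bound on $d_A^*F_A$; as written, the citation does not apply.
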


Since later in the proof we will use letter `$r$' for the radius of small balls, we choose to write $U(n)$-bundles rather than $U(r)$-bundles in this section to avoid confusion. The proof of \autoref{t5.1} is more or less standard in higher dimensional gauge theory (c.f. \cite{N88}). The argument adopted here was communicated to the author by Simon Donaldson in the case of Hermitian--Yang--Mills connections. Some parts of the proof are also sketched in \cite{D22}. We start by recalling the gauge fixing result in \cite{U82}. 

\begin{thm}[{\cite[Theorem 1.3]{U82}}]\label{t5.2}
Let $\mathbb{B} \subset \R^7$ be the unit ball and $p > 1$ an exponent. There exist constants $\epsilon_1 > 0$, $\mathfrak{c}_1 > 0$ such that each unitary connection $A$ on the trivial $SU(n)$-bundle satisfying $\|F_A\|_{L^{7/2}(\mathbb{B})} \leq \epsilon_1$ is gauge equivalent to a connection $A' = d+a'$ satisfying 
\begin{equation*}
(1) \; d^*a' = 0  \qquad
(2) \; \star a'|_{\partial \mathbb{B}} = 0 \qquad
(3) \; \|a'\|_{L^p_1(\mathbb{B})} \leq \mathfrak{c}_1 \|F_A\|_{L^p(\mathbb{B})}
\end{equation*}
\end{thm}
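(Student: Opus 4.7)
My plan follows Uhlenbeck's original continuity-method approach: first establish an a priori estimate for connections already in Coulomb gauge, then propagate the existence of such gauges along a continuous path of connections using openness via the implicit function theorem and closedness via weak compactness.

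\textbf{Step 1: A priori estimate in Coulomb gauge.} I would first prove that if $A' = d + a'$ satisfies $d^*a' = 0$ and $\star a'|_{\partial \mathbb{B}} = 0$, and if $\|a'\|_{L^{7/2}_1(\mathbb{B})}$ is sufficiently small, then estimate (3) holds. The Hodge-type elliptic estimate for $1$-forms with this absolute boundary condition on $\mathbb{B}$ yields
$$\|a'\|_{L^p_1(\mathbb{B})} \leq C_p\bigl(\|(d+d^*)a'\|_{L^p(\mathbb{B})} + \|a'\|_{L^p(\mathbb{B})}\bigr).$$
Combined with $da' = F_{A'} - a' \wedge a'$ and $d^*a' = 0$, plus the Sobolev embedding $L^p_1(\mathbb{B}^7) \hookrightarrow L^{7p/(7-p)}$ for $p<7$ together with the borderline case at $p=7/2$, Hölder's inequality controls the quadratic term by
$$\|a' \wedge a'\|_{L^p} \leq \|a'\|_{L^7}\,\|a'\|_{L^{7p/(7-p)}} \leq C\,\|a'\|_{L^{7/2}_1}\,\|a'\|_{L^p_1}.$$
Provided $\|a'\|_{L^{7/2}_1}$ is small enough, this term is absorbed into the left-hand side. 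The residual zeroth-order term $\|a'\|_{L^p}$ is controlled via a Poincaré-type inequality, using the triviality of the kernel of $d+d^*$ on $\mathbb{B}$ with the given absolute boundary condition. One then bootstraps from the conformally critical exponent $p=7/2$ to arbitrary $p>1$ by iterating the equation $da' = F_{A'} - a'\wedge a'$.

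\textbf{Step 2: Continuity method.} Given a connection $A$ with $\|F_A\|_{L^{7/2}(\mathbb{B})} \leq \epsilon_1$, set $A_t := tA$ for $t \in [0,1]$ and
$$T := \{\, t\in[0,1] : A_t \text{ is gauge equivalent on } \mathbb{B} \text{ to a connection satisfying } (1)\mhyphen(3) \,\}.$$
Clearly $0 \in T$. For openness: if $t_0 \in T$ with Coulomb representative $d+a'_{t_0}$, then for $t$ near $t_0$ I look for $u = \exp(\xi)$ with $\xi$ small satisfying $d^*\bigl(u\cdot A_t - d\bigr)=0$. The linearization at $\xi=0$ is the Neumann Laplacian $\xi \mapsto d^*d\xi$ on $\Omega^0(\mathbb{B},\su(n))$, together with $\partial_\nu \xi = \star a|_{\partial \mathbb{B}}$ from differentiating the boundary condition; this is Fredholm and invertible on the $L^2$-orthogonal complement of its finite-dimensional kernel, so the implicit function theorem produces the required $u$. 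For closedness: if $t_i \to t_\infty$ in $T$ with $u_i \cdot A_{t_i} = d + a'_i$ and $\|a'_i\|_{L^p_1} \leq \mathfrak{c}_1 \|F_{A_{t_i}}\|_{L^p}$ uniformly, Rellich--Kondrachov gives a weak $L^p_1$-limit $a'_i \rightharpoonup a'$ (strong in $L^q$ for subcritical $q$). The identity $du_i = u_i a'_i - a_{t_i} u_i$ then furnishes an $L^p_2$-bound on $u_i$, yielding a weak limit $u_i \rightharpoonup u$ with $u\cdot A_{t_\infty} = d+a'$ satisfying $(1)\mhyphen(3)$, the last by lower semicontinuity of norms under weak convergence. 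Thus $T$ is nonempty, open, and closed, so $T=[0,1]$ and $A=A_1$ admits the required gauge.

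\textbf{Main obstacle.} The hardest step is Step 1, because $p=7/2$ is precisely the conformally critical (scaling-invariant) exponent for curvature in dimension $7$: the absorption of $\|a' \wedge a'\|_{L^{7/2}}$ into $\|a'\|_{L^{7/2}_1}$ is borderline and forces the smallness threshold $\epsilon_1$ to depend delicately on the Sobolev constants of $\mathbb{B}^7$ and on the constant in the Hodge inequality with absolute boundary conditions. A secondary difficulty in Step 2 is ensuring that the curvatures $\|F_{A_{t}}\|_{L^{7/2}}$ remain below $\epsilon_1$ along the continuity path; since $F_{tA} = tF_A + (t^2-t)A\wedge A$, the cross term involves $\|A\|^2_{L^7}$, which must itself be controlled via the Coulomb-gauge estimates accumulated at smaller values of $t$, with $\epsilon_1$ chosen sufficiently small that this bootstrap closes.
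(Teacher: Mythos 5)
The paper does not prove this statement: it is quoted verbatim as Uhlenbeck's gauge-fixing theorem \cite[Theorem 1.3]{U82} and used as a black box, so there is no internal proof to compare against. Your sketch is, in substance, a faithful outline of Uhlenbeck's original argument — the a priori estimate in Coulomb gauge with absolute boundary conditions, absorption of the critical quadratic term $\|a'\wedge a'\|_{L^{7/2}}$ using smallness of $\|a'\|_{L^{7/2}_1}$, and the continuity method along $A_t = tA$ with openness by the implicit function theorem (the Neumann problem for $\xi$) and closedness by weak compactness. The Hölder/Sobolev bookkeeping in Step 1 is correct ($L^{7/2}_1 \hookrightarrow L^7$ and $L^p_1 \hookrightarrow L^{7p/(7-p)}$ in dimension $7$ give exactly $1/7 + (7-p)/(7p) = 1/p$). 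The issue you flag as a "secondary difficulty" — that $\|F_{tA}\|_{L^{7/2}}$ need not stay below $\epsilon_1$ along the path because of the cross term $(t^2-t)A\wedge A$ — is in fact the one genuinely delicate point in Uhlenbeck's original write-up (it is the gap later repaired carefully in Wehrheim's treatment of Uhlenbeck compactness), so you are right to single it out; a complete proof must either control $\|A\|_{L^7}$ via the accumulated Coulomb-gauge estimates as you propose, or rephrase the open-closed argument on the set of connections with small curvature rather than along the specific path $tA$. Modulo filling in that step, your outline is the standard and correct route to the theorem.
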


Recall $P$ is the induced $PU(r)$-bundle on which we have fixed a reference connection $A_0$. Let $A$ be a perturbed $G_2$-instanton which satisfies
\[
F_A \wedge \psi + \sigma_{\pmb{\pi}}(A) = 0
\]
for some $\pmb{\pi} \in \mathscr{P}$. We thus get 
\[
\pi_7(F_A) = - \star( \psi \wedge \star \sigma_{\pmb{\pi}}(A)). 
\]
From \eqref{e2.7} and the admissibility of $P$, we know that $\|F_A\|^2_{L^2} = 3 \|\pi_7(F_A) \|^2_{L^2}.$ With the $C^0$ bound on the perturbation, we get a priori bounds on $\|F_A\|_{L^2}$ and $\|\pi_7(F_A)\|_{C^0}$. In order to apply Uhlenbeck's result \autoref{t5.2}, we need to improve these bounds to the $L^{7/2}$ norm. To do this, we work locally over balls in $X$. 

\begin{lem}\label{l5.3}
Let $\mathbb{B} \subset \R^7$ be the unit ball equipped with the standard $G_2$-structure $\phi_0$, $B' \subset \mathbb{B}$ an interior ball, and $q \in (0, 7)$. There are constants $\epsilon_2 > 0$, $\mathfrak{c}_2 > 0$ such that for each connection $A=d + a$ on the trivial $SU(r)$-bundle satisfying 
\begin{equation*}
(1) \; d^*a = 0  \qquad
(2) \; \star a|_{\partial \mathbb{B}} = 0 \qquad
(3) \; \|a\|_{L^{7/2}_1(\mathbb{B})} \leq \epsilon_2
\end{equation*}
one has 
\[
\| a \|_{L^q_1(B')} \leq \mathfrak{c}_2 \left(\|F_A\|_{L^2(\mathbb{B})} + \|\pi_7(F_A)\|_{L^q(\mathbb{B})} \right). 
\]
\end{lem}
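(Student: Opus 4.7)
Plan. The crux is that, under Coulomb gauge $d^*a=0$, the linearization of the $G_2$-instanton equation only sees the $\Lambda^2_7$-component of the curvature, so $a$ should be controllable by $\pi_7(F_A)$ modulo the quadratic error $a\wedge a$. I would realize this via the overdetermined elliptic operator
\[
D\colon\Omega^1\longrightarrow\Omega^0\oplus\Omega^2_7,\qquad Da=(d^*a,\pi_7(da)).
\]
Its principal symbol at a nonzero $\xi$ is $a\mapsto(\iota_\xi a,\pi_7(\xi\wedge a))$, which is injective: once $\iota_\xi a=0$, the map $\xi^\perp\to\Lambda^2_7$, $a\mapsto\pi_7(\xi\wedge a)$, injects a $6$-dimensional space into a $7$-dimensional one, a $G_2$-algebraic fact equivalent to the ellipticity of the deformation complex of \autoref{ss3.2}. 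Consequently $D$ satisfies the standard interior $L^q$-elliptic estimates
\[
\|a\|_{L^q_1(B')}\le C_q\bigl(\|Da\|_{L^q(B'')}+\|a\|_{L^q(B'')}\bigr)
\]
for $B'\ssubset B''\ssubset\mathbb{B}$ and $q\in(1,\infty)$.

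Using $da=F_A-a\wedge a$ in Coulomb gauge, one has $Da=(0,\pi_7(F_A)-\pi_7(a\wedge a))$, so it remains to bound $\|a\wedge a\|_{L^q}$ and the zero-order term $\|a\|_{L^q}$ by $\|F_A\|_{L^2}+\|\pi_7(F_A)\|_{L^q}$. For the quadratic term I would use the dimension-$7$ Sobolev embedding $L^{7/2}_1\hookrightarrow L^7$, which combined with hypothesis (3) gives $\|a\|_{L^7(\mathbb{B})}\le C\epsilon_2$. Setting $q'=7q/(7-q)$ so that $1/q=1/7+1/q'$, Hölder yields
\[
\|a\wedge a\|_{L^q}\le C\|a\|_{L^7}\|a\|_{L^{q'}}\le C\epsilon_2\|a\|_{L^{q'}},
\]
and since $L^q_1\hookrightarrow L^{q'}$ on bounded domains, this term can be absorbed into the left-hand side on a slightly larger interior ball, provided $\epsilon_2$ is chosen small enough.

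To seed the zero-order term by $\|F_A\|_{L^2}$, a short linear bootstrap suffices. The Hodge-type estimate for $(d,d^*)$ on $1$-forms with the boundary condition $\star a|_{\partial\mathbb{B}}=0$, together with absorption of $\|a\wedge a\|_{L^2}\lesssim\|a\|_{L^4}^2\lesssim\epsilon_2\|a\|_{L^2_1}$, first gives $\|a\|_{L^2_1(\mathbb{B})}\le C\|F_A\|_{L^2(\mathbb{B})}$. Iterating the overdetermined elliptic estimate along a finite chain of shrinking balls $\mathbb{B}\supset B_0\supset\cdots\supset B'$ and advancing the integrability exponent at each stage via $L^{p_k}_1\hookrightarrow L^{7p_k/(7-p_k)}$ climbs from $p_0=2$ to any $q\in(1,7)$ in at most three steps, absorbing an $\epsilon_2$-small quadratic contribution each time. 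The range $q\in(0,1]$, if of interest, follows from the $q=2$ case by Hölder on the bounded domain $B'$.

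The main obstacle is the quantitative bookkeeping of this bootstrap: fixing a concrete sequence of nested balls and tracking how the elliptic constants $C_{p_k}$, the Sobolev constants, and the absorption thresholds combine into a single smallness condition on $\epsilon_2$ that makes the geometric-series absorption of the quadratic terms close cleanly. The algebraic verification that $a\mapsto\pi_7(\xi\wedge a)$ is injective on $\xi^\perp$ is routine from the $G_2$-representation theory recalled in \autoref{ss2.1} but should be recorded explicitly to justify the overdetermined ellipticity of $D$.
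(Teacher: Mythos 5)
Your proposal is correct and follows essentially the same route as the paper: the overdetermined elliptic operator $\delta = d^*\oplus \pi_7\circ d$, interior elliptic estimates, and absorption of the quadratic term $a\wedge a$ via the Sobolev embedding $L^{7/2}_1\hookrightarrow L^7$ and H\"older with exponents $7$ and $7q/(7-q)$, with the low-regularity seed supplied by the Coulomb-gauge bound $\|a\|_{L^2_1}\lesssim \|F_A\|_{L^2}$. The only cosmetic differences are that the paper removes the zero-order term in the elliptic estimate outright by showing that compactly supported elements of $\ker\delta$ vanish (using $\int_{\mathbb{B}}\tr(da\wedge da)\wedge\phi_0=0$, since $\phi_0$ is exact on the ball), and organizes the exponent bootstrap as a two-case split at $q=14/5$ rather than your chain of shrinking balls.
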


\begin{proof}
Let's consider the operator $\delta:=d^* \oplus \pi_7 \circ d: \Omega^1(\mathbb{B}, \mathfrak{su}(r)) \to \Omega^0(\mathbb{B}, \mathfrak{su}(r)) \oplus \Omega^2(\mathbb{B}, \mathfrak{su}(r))$. The symbol of this operator is given by $a \mapsto (\iota_{\xi} a, \pi_7(\xi \wedge a))$ when evaluating over a non-zero covector $\xi \in T^*\R^7$. Using the equivalence between $\pi_7(a \wedge \xi) = 0$ and $\xi \wedge a \wedge \star \phi_0 = 0$, one readily checks that the symbol evaluating on $\xi$ is injective when $\xi$ is non-zero. The elliptic regularity estimate tells us 
\[
\|a\|_{L^p_1} \leq const. \left(\|\delta a\|_{L^p} + \|a\|_{L^1} \right)
\]
for any $a \in \Omega^1(\mathbb{B}, \mathfrak{su}(r))$ compactly supported in $\mathbb{B}$. Since the $3$-form $\phi_0$ is exact on $\mathbb{B}$, Stokes' theorem tells us 
\[
2\|\pi_7(da)\|_{L^2}^2 - \|\pi_{14}(da)\|_{L^2}^2 = - 2n \int_{\mathbb{B}} \tr(da \wedge da) \wedge \phi_0 = 0,
\]
from which we conclude that $\|da\|^2_{L^2} = 3 \|\pi_7(da)\|^2_{L^2}$. In particular, we see that every compactly supported $1$-form in $\ker \delta$ has to vanish identically. So the $L^1$-term in the RHS of the elliptic estimate above can be dropped. 

Now let $A = d+a$ be a connection satisfying the assumption, and $\beta: \mathbb{B} \to \R$ a cut-off function with $\beta|_{B'} \equiv 1$. Then 
\begin{equation}\label{e5.1}
|\delta(\beta a)| \leq |d\beta| |a| + |\beta a \wedge \beta a| + |\pi_7(F_A)|. 
\end{equation}
Let's first assume $q \in (0, 14/5]$. Then we have the Sobolev embedding $L^2_1 \hookrightarrow L^q$. Combining with Uhlenbeck's result \autoref{t5.2}, we get 
\[
\|a\|_{L^q} \leq const. \|a\|_{L^2_1} \leq const. \|F_A\|_{L^2}
\]
when $\|F_A\|_{L^{7/2}}$ is sufficiently small. With the elliptic estimate of $\delta$, we get 
\begin{equation}\label{e5.2}
\|\beta a\|_{L^q_1} \leq const. \left( \|F_A\|_{L^2} + \|\beta a \wedge \beta a\|_{L^q} + \|\pi_7(F_A)\|_{L^q} \right). 
\end{equation}
The Hölder's inequality tells us $\|\beta a \wedge \beta a\|_{L^q} \leq \|\beta a\|_{L^{mq}} \|\beta a\|_{L^{nq}}$ for each positive pair $(m, n)$ satisfying $1/m + 1/n =1$. Let $m=7/q$ and $n = 7/(7-q)$. We have the Sobolev embeddings $L^{7/2}_1 \hookrightarrow L^{mq}$ and $L^q_1 \hookrightarrow L^{nq}$, from which it follows that 
\[
\|\beta a \wedge \beta a\|_{L^q} \leq const. \|\beta a\|_{L^{7/2}_1} \|\beta a\|_{L^q_1}. 
\]
When $\|\beta a\|_{L^{7/2}_1}$ is sufficiently small, we can rearrange the quadratic term to the LHS of \eqref{e5.2} to conclude that 
\[
\|a\|_{L^q_1(B')} \leq const. \left(\|F_A\|_{L^2(\mathbb{B})} + \|\pi_7(F_A)\|_{L^q(\mathbb{B})} \right). 
\]

Now for $q \in (14/5, 7)$, we have the Sobolev embedding $L^{14/5}_1 \hookrightarrow L^q$. So \eqref{e5.2} continues to hold with the help of the estimate for $\| \beta a\|_{L^{14/5}_1}$. The same argument then applies to complete the proof. 
\end{proof}

Note that the energy $\mathcal{YM}(A)$ is not conformal invariant. As a compensation, we can consider the normalized energy over balls
\begin{equation}
\hat{\mathcal{E}}(A, B_r(x)):= r^{-3} \int_{B_r(x)} |F_A|^2,
\end{equation}
where $B_r(x)$ is the ball of radius $r$ centered at $x \in X$. It is clear the normalized energy is conformally invariant. The normalized energy enjoys the following monotonicity property, which gives a control on the normalized energy when restricted on smaller balls. 

\begin{lem}\label{l5.4}
Let $(\mathbb{B}, \phi_0)$ be the unit ball in $\R^7$ equipped with the standard $G_2$-structure. There exists a constant $\mathfrak{c}_3 > 0$ such that each connection $A$ on the trivial $SU(n)$-bundle satisfies
\[
\hat{\mathcal{E}}(A, B_r(0)) \leq \hat{\mathcal{E}}(A, \mathbb{B}) + \mathfrak{c}_3 \| \pi_7(F_A) \|^2_{L^{7/2}(\mathbb{B})}. 
\]
\end{lem}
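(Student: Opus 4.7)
The plan is to establish a Price-type monotonicity formula in which the deviation $\pi_7 F_A$ from the exact $G_2$-instanton equation accounts for the failure of monotonicity. For pure $G_2$-instantons ($\pi_7 F_A \equiv 0$), the normalized energy $\hat{\mathcal{E}}(A, B_r)$ is known to be a monotonically increasing function of $r$; here I track the error terms arising from $\pi_7 F_A \ne 0$.

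Set $e(s) := \int_{B_s(0)} |F_A|^2 \,\mathrm{vol}_0$ and $P(s) := s^{-3} e(s) = \hat{\mathcal{E}}(A, B_s(0))$, so the goal is $P(r) \leq P(1) + C\|\pi_7 F_A\|^2_{L^{7/2}(\mathbb{B})}$. The Lie-algebra-valued version of \eqref{e2.7} gives the pointwise identity
\[
|F_A|^2\,\mathrm{vol}_0 \;=\; 3|\pi_7 F_A|^2\,\mathrm{vol}_0 \;-\; [F_A \wedge F_A]\wedge\phi_0,
\]
where $[F_A \wedge F_A]$ denotes the scalar $4$-form built from $F_A\wedge F_A$ via the Killing form on $\su(n)$. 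Since $d\phi_0 = 0$ and the Bianchi identity gives $d[F_A \wedge F_A] = 0$, while the Euler vector field $V = x^i\partial_i$ on $\mathbb{R}^7$ satisfies $\mathcal{L}_V\phi_0 = 3\phi_0$ (as $\phi_0$ has constant coefficients), we have $\phi_0 = \tfrac{1}{3}d\iota_V\phi_0$ and hence $[F_A\wedge F_A]\wedge\phi_0 = \tfrac{1}{3}d\bigl([F_A\wedge F_A]\wedge\iota_V\phi_0\bigr)$. Stokes' theorem then converts the bulk integral into a boundary term:
\[
\int_{B_s} [F_A\wedge F_A]\wedge\phi_0 \;=\; \tfrac{s}{3} \int_{\partial B_s} [F^T\wedge F^T]\wedge\phi^N,
\]
where $F^T + dr\wedge F^N$ is the tangential/radial splitting of $F_A|_{\partial B_s}$ and $\phi^N$ is the radial component of $\phi_0$ restricted to $T\partial B_s$.

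The heart of the argument is the algebraic boundary estimate asserting that, pointwise on $\partial B_s$,
\[
-[F^T \wedge F^T]\wedge\phi^N \;\leq\; \bigl(|F^T|^2 - |F^N|^2\bigr)\,d\sigma_s \;+\; C_0 |\pi_7 F_A|^2\,d\sigma_s,
\]
which, combined with the preceding identity and differentiation of $P(s)$, produces a differential inequality of the form $P'(s) \geq -C s^{-2}\int_{\partial B_s}|\pi_7 F_A|^2\,d\sigma_s$. Integrating from $r$ to $1$ recasts the accumulated error as $C\int_{\mathbb{B}\setminus B_r} |\pi_7 F_A|^2 |y|^{-2}\,d\mathrm{vol}_y$. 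The weight $|y|^{-2}$ lies in $L^{7/3}(\mathbb{B})$ because $\int_0^1 r^{6 - 14/3}\,dr$ converges; so Hölder's inequality with conjugate exponents $(7/4, 7/3)$ combined with the identity $\bigl\||\pi_7 F_A|^2\bigr\|_{L^{7/4}} = \|\pi_7 F_A\|^2_{L^{7/2}}$ yields the stated bound.

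The main obstacle is the algebraic boundary inequality producing the sign structure in Step 3. This requires decoding the interaction of the $G_2$-decomposition $\Lambda^2 = \Lambda^2_7 \oplus \Lambda^2_{14}$ on $\mathbb{R}^7$ with the radial/tangential splitting on $\partial B_s$ and the induced nearly Kähler structure on $S^6$; the coefficients $|F^T|^2 - |F^N|^2$ must combine, after the $s^{-3}$ rescaling in $P'(s)$, exactly with the derivative $e'(s) = \int_{\partial B_s}|F_A|^2$ to cancel the leading monotone contribution. For a pure instanton ($\pi_7 F_A \equiv 0$) the inequality reduces to a known identity on the $G_2$-cone $\mathbb{R}^7\setminus\{0\}$, and the quadratic remainder in $\pi_7 F_A$ collects its perturbative failure; extracting the precise constant $C_0$ is tedious but not deep.
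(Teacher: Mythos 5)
Your overall strategy is the same as the paper's: differentiate the normalized energy, use \eqref{e2.7} to write $|F_A|^2\vol_0 = 3|\pi_7(F_A)|^2\vol_0 - (-2n\tr(F_A\wedge F_A))\wedge\phi_0$, exploit $d\iota_V\phi_0 = 3\phi_0$ for the Euler field and Stokes' theorem to push the Chern--Weil term to $\partial B_s$, and finish with a pointwise algebraic inequality on the sphere. The gap is that this last inequality --- which you yourself flag as ``the heart of the argument'' and ``the main obstacle'' --- is asserted rather than proved, and it carries essentially the entire content of the lemma. Moreover, the inequality you posit is both stronger and harder than what is needed. The paper uses the cruder, unconditional bound $3\,\alpha\wedge\alpha\wedge\omega_0 \le |\alpha|^2\, r\,\vol_{\partial B_r}$ for \emph{every} $2$-form $\alpha$, where $\omega_0 = \iota_\nu\phi_0$ with $\nu = \tfrac13 r\partial_r$; this is pure linear algebra, checked at the single point $(r,0,\dots,0)$ (where $\omega_0 = \tfrac13 x_1(dx_{23}-dx_{45}-dx_{67})$) using transitivity of the $G_2$-action on spheres. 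That already makes the sum of the boundary Chern--Weil term and $r^{-3}\int_{\partial B_r}|F_A|^2$ nonnegative, with no tangential/normal splitting, no nearly K\"ahler structure on $S^6$, and no $\pi_7$-correction on the boundary; the only error left is the bulk term $-9r^{-4}\|\pi_7(F_A)\|^2_{L^2(B_r)}$. Your refined Price-type inequality with $|F^T|^2 - |F^N|^2$ would have to be established by exactly this sort of pointwise computation, so deferring it leaves the proof incomplete.

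There is also a bookkeeping problem in the differential inequality you extract. The factor $\tfrac{s}{3}$ from $\iota_V\phi_0$ cancels against the $3s^{-4}$ in $P'(s)$ to give an overall weight $s^{-3}$ on the boundary integral, so your error term enters as $-C_0 s^{-3}\int_{\partial B_s}|\pi_7(F_A)|^2$, not $-Cs^{-2}\int_{\partial B_s}|\pi_7(F_A)|^2$; and the bulk error $-9s^{-4}\|\pi_7(F_A)\|^2_{L^2(B_s)}$ coming from the identity $|F_A|^2 = 3|\pi_7(F_A)|^2 + \star\!\left((-2n\tr(F_A\wedge F_A))\wedge\phi_0\right)$ has disappeared from your final inequality altogether. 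With the correct weights, integrating in $s$ produces an accumulated error of the form $\int_{\mathbb{B}\setminus B_r}|\pi_7(F_A)|^2\,|y|^{-3}\,d\vol$, not $|y|^{-2}$, and $|y|^{-3}\notin L^{7/3}(\mathbb{B})$ since the radial integral $\int_0^1 s^{-7}s^6\,ds$ diverges; your concluding H\"older step with exponents $(7/4,7/3)$ therefore fails (by a logarithm), and the constant $\mathfrak{c}_3$ would not be uniform as $r\to 0$. To close the argument along your lines you would need either a boundary inequality whose error is linear in $\pi_7(F_A)$ against $F^N$, so that it can be absorbed into the good term $2s^{-3}\int_{\partial B_s}|F^N|^2$ by Young's inequality, or a stronger norm of $\pi_7(F_A)$ on the right-hand side.
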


\begin{proof}
Let's write $B_r$ for the ball centered at the origin of radius $r$. The derivative of the normalized energy along the radial direction can be computes as 
\[
\frac{\partial}{\partial r} \hat{\mathcal{E}}(A, B_r(0)) = -3r^{-4} \int_{B_r} |F_A|^2 + r^{-3} \int_{\partial B_r} |F_A|^2. 
\]
Appealing to  the relation $-\tr(F_A \wedge F_A) \wedge \phi_0 = 2|\pi_7(F_A)|^2 - |\pi_{14}(F_A)|^2$, we find that 
\[
\frac{\partial}{\partial r} \hat{\mathcal{E}}(A, B_r(0))  = -9r^{-4}\|\pi_7(F_A)\|^2_{L^2(B_r)} +3r^{-4} \int_{B_r} -2n \tr(F_A \wedge F_A) \wedge \phi_0 + r^{-3} \int_{\partial B_r} |F_A|^2. 
\]
Let $\nu = 1/3 r\partial_r$ be the radial vector field on $\mathbb{B}$. Since $G_2$ acts on the spheres $\partial B_r$ transitively and preserves $\phi_0$, one can verify the relation $d\iota_{\nu} \phi_0 = \phi_0$ directly over points along one axis. Writing $\iota_{\nu} \phi_0 = \omega_0$, we have 
\[
\int_{B_r} \tr(F_A \wedge F_A) \wedge \phi_0 = \int_{\partial B_r} \tr(F_A \wedge F_A) \wedge \omega_0. 
\]
Given any $2$-form $\alpha$, there is a relation 
\begin{equation}\label{e5.4}
3 \alpha \wedge \alpha \wedge \omega_0 \leq |\alpha|^2 r\vol_{\partial B_r}
\end{equation}
on the $r$-sphere $\partial B_r$. To verify this relation, we use the transitivity of the $G_2$-action on the $r$-sphere again, which means it suffices to work at the point $\pmb{x}_0 = (r, 0, ..., 0)$ on the $x_1$-axis where $\vol_{\partial B_r} = dx_{234567}$ and $\omega_0|_{\pmb{x}_0} = 1/3 \cdot x_1(dx_{23} - dx_{45} -dx_{67})$. Then one can write out $\alpha$ in local coordinates to complete the computation. From \eqref{e5.4}, it follows that 
\[
3r^{-4} \int_{\partial B_r} -2n \tr(F_A \wedge F_A) \wedge \omega_0 + r^{-3} \int_{\partial B_r} |F_A|^2 \geq 0. 
\]
In this way, we get a lower bound on the derivative 
\[
\frac{\partial}{\partial r} \hat{\mathcal{E}}(A, B_r(0))  \geq -9r^{-4}\|\pi_7(F_A)\|^2_{L^2(B_r)} \geq const. r  \cdot \|\pi_7(F_A)\|^2_{L^{7/2}(\mathbb{B})}. 
\]
The result now follows from integration from $r$ to $1$. 
\end{proof}

\begin{cor}\label{c5.5}
Let $(\mathbb{B}, \phi_0)$ be the unit ball in $\R^7$ equipped with the standard $G_2$-structure. There exists constants $\epsilon_4 > 0, \mathfrak{c}_4 > 0$ such that for each connection $A$ on the trivial $SU(n)$-bundle satisfying 
\[
\|F_A\|_{L^{7/2}(B_r(x))} \leq \epsilon_4,
\]
where $B_r(x) \Subset B_{1/2}(0)$ is some ball contained in the ball centered at the origin of radius $1/2$, one has 
\[
\|F_A\|_{L^{7/2}(B_{3r/4}(x))} \leq \mathfrak{c}_4 \left(\|F_A\|_{L^2(\mathbb{B})} + \|\pi_7(F_A)\|_{L^{7/2}(\mathbb{B})} \right). 
\]
\end{cor}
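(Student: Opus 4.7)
\textbf{Proof plan for \autoref{c5.5}.} The strategy is to rescale $B_r(x)$ to the unit ball and then combine, in order: Uhlenbeck gauge-fixing (\autoref{t5.2}), the local elliptic estimate (\autoref{l5.3}), a Sobolev embedding to convert estimates on the connection form into estimates on the curvature, and the normalized-energy monotonicity (\autoref{l5.4}) to pass to the global right-hand side. All the norms involved are either scale-invariant in dimension seven or rescale by a universal constant, which is what makes the chain of estimates consistent.

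First, I will pull back $A$ under $y \mapsto x + ry$ to obtain a connection $\tilde A$ on $\mathbb{B}$; since $\|F\|_{L^{7/2}}$ is conformally invariant in dimension seven, the smallness hypothesis transfers to $\|F_{\tilde A}\|_{L^{7/2}(\mathbb{B})} \leq \epsilon_4$. Choosing $\epsilon_4 \leq \epsilon_1$, \autoref{t5.2} furnishes a Coulomb gauge $\tilde A = d + \tilde a$ with
\[
\|\tilde a\|_{L^{7/2}_1(\mathbb{B})} \leq \mathfrak{c}_1 \|F_A\|_{L^{7/2}(B_r(x))}.
\]
Shrinking $\epsilon_4$ so that $\mathfrak{c}_1 \epsilon_4 \leq \epsilon_2$, I can then apply \autoref{l5.3} with $q = 7/2$ and $B' = B_{3/4}(0)$ to get
\[
\|\tilde a\|_{L^{7/2}_1(B_{3/4}(0))} \leq \mathfrak{c}_2\bigl(\|F_{\tilde A}\|_{L^2(\mathbb{B})} + \|\pi_7(F_{\tilde A})\|_{L^{7/2}(\mathbb{B})}\bigr) = \mathfrak{c}_2\bigl(\hat{\mathcal{E}}(A, B_r(x))^{1/2} + \|\pi_7(F_A)\|_{L^{7/2}(B_r(x))}\bigr),
\]
using that $\|F_{\tilde A}\|_{L^2(\mathbb{B})}^2 = \hat{\mathcal{E}}(A, B_r(x))$ and that $\|\pi_7(F)\|_{L^{7/2}}$ is scale-invariant.

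Next, since $F_{\tilde A} = d\tilde a + \tilde a \wedge \tilde a$ in the Coulomb gauge, the Sobolev embedding $L^{7/2}_1 \hookrightarrow L^7$ (valid on $B_{3/4}(0) \subset \R^7$) yields
\[
\|F_{\tilde A}\|_{L^{7/2}(B_{3/4}(0))} \leq \|d\tilde a\|_{L^{7/2}} + C\|\tilde a\|_{L^7}^2 \leq C'\bigl(\|\tilde a\|_{L^{7/2}_1} + \|\tilde a\|_{L^{7/2}_1}^2\bigr).
\]
Shrinking $\epsilon_4$ once more so that $\mathfrak{c}_1 \epsilon_4 \leq 1$ lets the linear term absorb the quadratic one, and undoing the rescaling gives
\[
\|F_A\|_{L^{7/2}(B_{3r/4}(x))} \leq C''\bigl(\hat{\mathcal{E}}(A, B_r(x))^{1/2} + \|\pi_7(F_A)\|_{L^{7/2}(B_r(x))}\bigr).
\]

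Finally, I need to convert the local normalized energy $\hat{\mathcal{E}}(A, B_r(x))$ into the global quantities in the statement. Because $B_r(x) \Subset B_{1/2}(0)$ forces $|x| + r < 1/2$, one has $B_{1/2}(x) \subset \mathbb{B}$, and translation-invariance of $\phi_0$ together with the scale-invariance of both $\hat{\mathcal{E}}$ and $\|\pi_7(F_A)\|_{L^{7/2}}$ allow me to apply \autoref{l5.4} on the translated-and-rescaled ball $B_{1/2}(x)$, producing
\[
\hat{\mathcal{E}}(A, B_r(x)) \leq \hat{\mathcal{E}}(A, B_{1/2}(x)) + \mathfrak{c}_3 \|\pi_7(F_A)\|_{L^{7/2}(B_{1/2}(x))}^2 \leq 8\|F_A\|_{L^2(\mathbb{B})}^2 + \mathfrak{c}_3 \|\pi_7(F_A)\|_{L^{7/2}(\mathbb{B})}^2.
\]
Combining with the previous estimate and using $\|\pi_7(F_A)\|_{L^{7/2}(B_r(x))} \leq \|\pi_7(F_A)\|_{L^{7/2}(\mathbb{B})}$ gives the required inequality. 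The only delicate point in the argument is the triple bookkeeping for $\epsilon_4$ (one bound from Uhlenbeck, one from \autoref{l5.3}, and one from the quadratic-to-linear absorption); the geometric content, namely the translation- and scale-invariance making \autoref{l5.4} apply on $B_{1/2}(x)$, is straightforward.
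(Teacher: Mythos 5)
Your proposal is correct and follows essentially the same route as the paper: rescale $B_r(x)$ to the unit ball, apply Uhlenbeck gauge-fixing and \autoref{l5.3}, convert the bound on the connection form into a bound on $F_{A'}$ via the Sobolev multiplication $L^{7/2}_1 \times L^{7/2}_1 \hookrightarrow L^{7/2}$, rescale back, and finally use the monotonicity of \autoref{l5.4} to replace the local normalized energy by the global $L^2$ and $L^{7/2}$ quantities. The only cosmetic difference is your choice of $B_{1/2}(x)$ rather than the paper's $B_{r+1/2}(x)$ as the outer ball in the monotonicity step; both are legitimate since each is contained in $\mathbb{B}$.
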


\begin{proof}
Let's rescale and translate the ball $B_r(x)$ to the unit ball $\mathbb{B}$, and denote the corresponding connection by $A'=d+a'$. Since the $L^{7/2}$-norm of the curvature and the normalizer energy is conformally invariant, we have 
\[
\|F_A\|_{L^{7/2}(B_r(x))} = \|F_{A'}\|_{L^{7/2}(\mathbb{B})} \quad \text{ and } \quad \hat{\mathcal{E}}(A, B_r(x)) = \hat{\mathcal{E}}(A', \mathbb{B}) = \|F_{A'}\|^2_{L^2(\mathbb{B})}. 
\]
Then \autoref{t5.2} and \autoref{l5.3} tell us that 
\[
\|a'\|_{L^{7/2}_1(B_{3/4}(0))} \leq const. \left(\|F_{A'}\|_{L^2(\mathbb{B})} + \|\pi_7(F_{A'})\|_{L^{7/2}(\mathbb{B})} \right) < 1 
\]
once $\|F_{A'}\|_{L^{7/2}(\mathbb{B})}$ is sufficiently small. Using the Sobolev multiplication $L^{7/2}_1 \times L^{7/2}_1 \hookrightarrow L^{7/2}$, we have 
\[
\|F_{A'}\|_{L^{7/2}(B_{3/4}(0))} \leq const. \left(\|a'\|_{L^{7/2}_1(B_{3/4}(0))} + \|a'\|^2_{L^{7/2}_1(B_{3/4}(0))} \right) \leq const. \|a'\|_{L^{7/2}_1(B_{3/4}(0))}. 
\]
Rescale back to $B_r(x)$, we get 
\[
\|F_A\|_{L^{7/2}(B_{3r/4}(x))} \leq const. \left(r^{-3/2} \||F_A\|_{L^2(B_r(x))} + \|\pi_7(F_A)\|_{L^{7/2}(B_r(x))} \right). 
\]
Note that $B_{r+1/2}(x) \Subset \mathbb{B}$. \autoref{l5.4} gives us 
\[
\begin{split}
r^{-3}\||F_A\|^2_{L^2(B_r(x))} & \leq (r+1/2)^{-3} \| F_A\|^2_{L^2(B_{r+1/2}(x))} + const. \|\pi_7(F_A)\|^2_{L^{7/2}(B_{r+1/2}(x))} \\
& \leq const. \left( \| F_A\|_{L^2(\mathbb{B})} + \|\pi_7(F_A)\|_{L^{7/2}(\mathbb{B})} \right)^2,
\end{split} 
\]
which can be substituted back to the inequality above to complete the proof.  
\end{proof}

Now we are prepared to bound the $L^{7/2}$-norm of the curvature on unit balls. 

\begin{lem}\label{l5.6}
Let $(\mathbb{B}, \phi_0)$ be the unit ball in $\R^7$ equipped with the standard $G_2$-structure and $B' \Subset B_{1/2}(0)$ an interior ball. There are constants $\epsilon_5>0$, $\mathfrak{c}_5>0$ such that for each connection $A$ on the trivial $SU(n)$-bundle satisfying 
\[
\|F_A\|_{L^2(\mathbb{B})} \leq \epsilon_5 \quad \text{ and } \quad \|\pi_7(F_A)\|_{L^{7/2}(\mathbb{B})} \leq \epsilon_5, 
\]
one has the following bound 
\[
\|F_A\|_{L^{7/2}(B')} \leq \mathfrak{c}_5\left(\|F_A\|_{L^2(\mathbb{B})} + \|\pi_7(F_A)\|_{L^{7/2}(\mathbb{B})} \right). 
\]
\end{lem}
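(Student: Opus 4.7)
The plan is to deduce Lemma~\ref{l5.6} from Corollary~\ref{c5.5} via a finite-covering argument, whose central ingredient is a uniform-scale statement: I want to find $r_0>0$, depending only on $B'\Subset B_{1/2}(0)$ and the chosen smallness threshold, such that $\|F_A\|_{L^{7/2}(B_{r_0}(x))}\leq\epsilon_4$ for every $x\in B'$. Granted such a scale, Corollary~\ref{c5.5} applied on each ball $B_{r_0}(x_i)$ of a finite cover $\{B_{r_0/2}(x_i)\}$ of $B'$ bounds $\|F_A\|_{L^{7/2}(B_{3r_0/4}(x_i))}$ by $\mathfrak{c}_4(\|F_A\|_{L^2(\mathbb{B})}+\|\pi_7(F_A)\|_{L^{7/2}(\mathbb{B})})$, and summing gives the conclusion.

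To produce the uniform scale I would run a continuity argument in the radial parameter. Pick $R_0>0$ so that $B_{R_0}(x)\Subset B_{1/2}(0)$ for every $x\in B'$, and set
\[
r^*(x):=\sup\bigl\{r\in(0,R_0]\,:\,\|F_A\|_{L^{7/2}(B_r(x))}\leq\epsilon_4\bigr\},
\]
which is positive since $F_A\in L^{7/2}$. The claim is that, for $\epsilon_5$ sufficiently small, $r^*(x)=R_0$ for all $x\in B'$. Suppose for contradiction that $r^*(x_0)<R_0$ at some $x_0$; by continuity in the radius, $\|F_A\|_{L^{7/2}(B_{r^*(x_0)}(x_0))}=\epsilon_4$. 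Applying Corollary~\ref{c5.5} at radii approaching $r^*(x_0)$ from below yields
\[
\|F_A\|_{L^{7/2}(B_{3r^*(x_0)/4}(x_0))}\leq 2\mathfrak{c}_4\epsilon_5,
\]
so the annulus $\mathcal{A}:=B_{r^*(x_0)}(x_0)\setminus B_{3r^*(x_0)/4}(x_0)$ carries $L^{7/2}$-mass at least $\tfrac{1}{2}\epsilon_4^{7/2}$ once $\epsilon_5$ is small.

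To derive the contradiction, I would cover $\mathcal{A}$ by balls to which Corollary~\ref{c5.5} applies. For an interior point $y\in\mathcal{A}$ at distance at least $s$ from $\partial B_{r^*(x_0)}(x_0)$ we have $B_s(y)\subset B_{r^*(x_0)}(x_0)$, so $\|F_A\|_{L^{7/2}(B_s(y))}\leq\epsilon_4$ and Corollary~\ref{c5.5} gives $\|F_A\|_{L^{7/2}(B_{3s/4}(y))}\leq 2\mathfrak{c}_4\epsilon_5$. Covering the bulk of $\mathcal{A}$ by a dimension-controlled number of such balls bounds the bulk $L^{7/2}$-mass by a constant multiple of $\epsilon_5^{7/2}$, which is $\ll\epsilon_4^{7/2}$. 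The hard part is controlling the thin boundary shell of $\mathcal{A}$ near $\partial B_{r^*(x_0)}(x_0)$, where the admissible sub-balls shrink; my plan is to iterate the covering at geometrically finer scales (halving the shell width at each step, each iteration contributing a uniformly bounded amount to the $L^{7/2}$-mass) and in parallel to invoke the monotonicity estimate of Lemma~\ref{l5.4} at a slightly enlarged radius $r^*(x_0)+\mu$ so that the same reverse-H\"older mechanism behind Corollary~\ref{c5.5} upgrades the $L^2$-control on the shell to $L^{7/2}$-control. Together these give $\|F_A\|_{L^{7/2}(\mathcal{A})}^{7/2}=O(\epsilon_5^{7/2})$, contradicting the lower bound $\tfrac{1}{2}\epsilon_4^{7/2}$ for $\epsilon_5$ small, and forcing $r^*(x)\equiv R_0$ uniformly on $B'$. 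The main obstacle I anticipate is precisely making the boundary-shell iteration close without letting the constant blow up, and this is where a Vitali-type finite-overlap argument together with the scale invariance of $L^{7/2}$ would need to be invoked carefully.
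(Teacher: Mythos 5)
Your overall architecture---introduce a critical radius at which the local $L^{7/2}$ norm of $F_A$ reaches the threshold $\epsilon_4$, show this radius cannot degenerate, then cover $B'$ by finitely many balls and apply \autoref{c5.5} on each---is the same as the paper's. The gap is exactly where you flag it: the annulus-covering step does not close, and the remedies you sketch cannot fix it. At dyadic distance $2^{-k}r^*$ from the outer sphere $\partial B_{r^*(x_0)}(x_0)$ you need on the order of $2^{6k}$ balls of radius comparable to $2^{-k}r^*$ to cover the corresponding shell, while \autoref{c5.5} assigns each of them the \emph{same} bound $2\mathfrak{c}_4\epsilon_5$: its right-hand side is a global quantity that does not improve as the ball shrinks, precisely because $\|F_A\|_{L^{7/2}}$ is scale-invariant in dimension $7$ (and the monotonicity of \autoref{l5.4} only reproduces the same scale-invariant control $\rho^{-3/2}\|F_A\|_{L^2(B_\rho)}\lesssim\epsilon_5$). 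Hence the $k$-th generation contributes roughly $2^{6k}(2\mathfrak{c}_4\epsilon_5)^{7/2}$ to $\int_{\mathcal{A}}|F_A|^{7/2}$---not a ``uniformly bounded amount''---and the sum over $k$ diverges no matter how small $\epsilon_5$ is. A Vitali covering does not help; this is the standard reason one cannot rule out critical-norm concentration by counting mass on shells.

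The paper never estimates the annulus. It proves the weaker but sufficient comparability $D(x)\leq M\,r(x)$ (distance to $\partial B_{1/2}(0)$ versus critical radius), which already yields a uniform positive lower bound for the covering radius on $B'$, by contradiction plus compactness: if $D(x_i)/r(x_i)\to\infty$, extract a limit point $x_o$ of the sequence $x_i$; by absolute continuity of the integral of $|F_A|^{7/2}$ there is a \emph{fixed} radius $r_o$ with $\|F_A\|_{L^{7/2}(B_{r_o}(x_o))}<\epsilon_4$; one application of \autoref{c5.5} on that single fixed ball gives $\|F_A\|_{L^{7/2}(B_{3r_o/4}(x_o))}\leq 2\mathfrak{c}_4\epsilon_5<\epsilon_4$ for $\epsilon_5$ small; and since $B_{r(x_k)+\epsilon}(x_k)\subset B_{3r_o/4}(x_o)$ for $k$ large, this contradicts the maximality of $r(x_k)$ in its defining supremum. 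The missing idea in your write-up is that the contradiction should come from violating the supremum that defines the critical radius---via one application of \autoref{c5.5} at a fixed scale around the limit point---rather than from a mass count over the annulus.
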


\begin{proof}
Given a point $x \in \mathring{B}_{1/2}(0)$, we denote by $D(x) = 1/2 - |x|$ the distance between $x$ and the boundary $\partial B_{1/2}(0)$. Consider 
\[
r(x) = \sup \{ r < D(x): \|F_A\|_{L^{7/2}(B_r(x))} \leq \epsilon_4\},
\]
where $\epsilon_4$ is the constant in \autoref{c5.5}. We claim that when $\|F_A\|_{L^2(\mathbb{B})}$ and $\|\pi_7(F_A)\|_{L^{7/2}(\mathbb{B})}$ are sufficiently small, one can find $M > 0$ such that $D(x) \leq Mr(x)$ holds for all $x \in B_{1/2}(0)$. 

Let's assume the claim is true for a moment. Then given any interior ball $B' \Subset B_{1/2}(0)$, one can cover $B'$ with finitely many balls of the form $B_{3r/4}(x)$ so that $B_r(x) \Subset B_{1/2}(0)$ and $\|F_A\|_{L^{7/2}(B_r(x))} \leq \epsilon_4$. Then we get the desired estimate on $\|F_A\|_{L^{7/2}(B')}$ by applying \autoref{c5.5} to each of the balls. 

Now we prove the claim. Suppose one can find a sequence $x_i \in \mathring{B}_{1/2}(0)$ such that $D(x_i)/r(x_i) \to \infty$. After taking a subsequence, we may assume that $x_i \to x_o$. Note that $x_o \in \partial B_{1/2}(0)$, for otherwise one gets $r(x_o) = 0$ which is impossible. Now we fix $r_o > 0$ so that $\|F_A\|_{L^{7/2}(B_{r_o}(x_o))} < \epsilon_4$, and choose $x_k$ so that $B_{r(x_k)}(x_k) \subset \mathring{B}_{3r_o/4}(x_o)$. \autoref{c5.5} tells us that 
\begin{equation}\label{e5.5}
\|F_A\|_{L^{7/2}(B_{3r_o/4}(x_o))} \leq \mathfrak{c}_4 \left(\|F_A\|_{L^2(\mathbb{B})} + \|\pi_7(F_A)\|_{L^{7/2}(\mathbb{B})} \right). 
\end{equation}
By choosing $\|F_A\|_{L^2(\mathbb{B})}$ and $\|\pi_7(F_A)\|_{L^{7/2}(\mathbb{B})}$ sufficiently small, we can arrange that the RHS of \eqref{e5.5} is less than $\epsilon_4$. In particular, for some $\epsilon > 0$, we get 
\[
\|F_A\|_{L^{7/2}(B_{r(x_k)+\epsilon}(x(k)))} \leq \|F_A\|_{L^{7/2}(B_{3r_o/4}(x_o))} < \epsilon_4. 
\]
This contradicts to the choice of $r(x_k)$. 
\end{proof}

Now we move on to control the higher derivatives of connection $1$-forms.

\begin{lem}\label{l5.7}
Let $(\mathbb{B}, \phi_0)$ be the unit ball in $\R^7$ equipped with the standard $G_2$-structure and $B' \Subset \mathbb{B}$ an interior ball. There is a constant $\epsilon_6 > 0$ such that for each connection $A=d + a$ on the trivial $SU(n)$-bundle satisfying 
\[
d^*a = 0 \quad \text{ and } \quad \|a\|_{L^{7/2}_1(\mathbb{B})} \leq \epsilon_6
\]
one has 
\[
\|a\|_{L^{7/2}_l(B')} \leq  p_l\left(\|F_A\|_{L^2(\mathbb{B})} + \|\pi_7(F_A)\|_{L^{7/2}_{l-1}(\mathbb{B})} \right) \quad \forall \; l \geq 2,
\]
where $p_l: \R \to \R$ is a polynomial with positive coefficients and no zero-th order term. 
\end{lem}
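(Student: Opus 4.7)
The plan is a standard elliptic bootstrap for the operator $\delta := d^* \oplus (\pi_7 \circ d)$ introduced in the proof of \autoref{l5.3}. Since $\delta$ has injective principal symbol, we have the interior elliptic estimate
\[
\|u\|_{L^{7/2}_l(B)} \leq C_l\bigl(\|\delta u\|_{L^{7/2}_{l-1}(B'')} + \|u\|_{L^{7/2}(B'')}\bigr)
\]
for concentric balls $B \Subset B''$. The Coulomb condition $d^*a = 0$ together with $F_A = da + a \wedge a$ yields
\[
\delta a = \bigl(0,\, \pi_7(F_A) - \pi_7(a \wedge a)\bigr),
\]
so the elliptic estimate, applied to $\beta a$ with a cutoff $\beta$ equal to $1$ on $B'$ and supported in some $B'' \Subset \mathbb{B}$, reduces the problem to controlling the quadratic term $\|a \wedge a\|_{L^{7/2}_{l-1}(B'')}$ together with the already-assumed $\|\pi_7(F_A)\|_{L^{7/2}_{l-1}}$ and the lower-order piece $\|a\|_{L^{7/2}_{l-1}(B'')}$.

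I would proceed by induction on $l$. For the inductive step $l \geq 3$ things are easy: in dimension seven $L^{7/2}_{l-1}$ is a Banach algebra (since $(l-1)\cdot 7/2 > 7$), so $\|a \wedge a\|_{L^{7/2}_{l-1}} \leq C\|a\|_{L^{7/2}_{l-1}}^2$. Combining the elliptic estimate with the inductive hypothesis on a slightly larger ball gives
\[
\|a\|_{L^{7/2}_l(B')} \leq C\bigl(p_{l-1}(\cdot) + \|\pi_7(F_A)\|_{L^{7/2}_{l-1}} + p_{l-1}(\cdot)^2\bigr),
\]
which is precisely a polynomial $p_l$ of the desired form, with no zero-th order term because neither $p_{l-1}$ nor $\|\pi_7(F_A)\|_{L^{7/2}_{l-1}}$ carries one.

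The real work is the base case $l=2$, which is borderline: in dimension seven, $L^{7/2}_1$ fails to be an algebra. Here one uses instead the endpoint Sobolev embedding $L^{7/2}_1 \hookrightarrow L^7$ and Hölder to get
\[
\|a \wedge a\|_{L^{7/2}} \lesssim \|a\|_{L^7}^2 \lesssim \|a\|_{L^{7/2}_1}^2,
\qquad
\|d(a \wedge a)\|_{L^{7/2}} \lesssim \|da\|_{L^7}\|a\|_{L^7} \lesssim \|a\|_{L^{7/2}_2}\|a\|_{L^{7/2}_1}.
\]
Plugging this into the elliptic estimate gives a term of the form $C\|a\|_{L^{7/2}_2(B'')}\|a\|_{L^{7/2}_1(\mathbb{B})}$ on the right-hand side. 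Iterating over a finite nested chain $B' = B_0 \Subset B_1 \Subset \cdots \Subset B_N \Subset \mathbb{B}$ with a cutoff at each stage, and using the smallness hypothesis $\|a\|_{L^{7/2}_1} \leq \epsilon_6$ to absorb the coefficient of $\|a\|_{L^{7/2}_2}$ into the left-hand side, produces
\[
\|a\|_{L^{7/2}_2(B')} \leq C\bigl(\|a\|_{L^{7/2}_1(\mathbb{B})} + \|\pi_7(F_A)\|_{L^{7/2}_1(\mathbb{B})} + \|a\|_{L^{7/2}_1(\mathbb{B})}^2\bigr).
\]
Feeding in the $L^{7/2}_1$-bound from \autoref{l5.3}, namely $\|a\|_{L^{7/2}_1(\mathbb{B})} \leq C\bigl(\|F_A\|_{L^2(\mathbb{B})} + \|\pi_7(F_A)\|_{L^{7/2}(\mathbb{B})}\bigr)$, gives the polynomial $p_2$.

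The main obstacle I expect is precisely this borderline endpoint at $l=2$: one has to replace the would-be algebra property of $L^{7/2}_1$ by a mixed estimate that costs a derivative, and the smallness assumption $\epsilon_6$ is exactly what is needed to close the absorption argument. Once past $l=2$ the algebra property of $L^{7/2}_{l-1}$ takes over and the induction runs without further difficulty.
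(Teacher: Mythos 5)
Your overall strategy coincides with the paper's: an interior elliptic estimate for $\delta = d^*\oplus(\pi_7\circ d)$ applied to $\beta a$ with a cutoff, the quadratic term handled by Sobolev multiplication, and the borderline case closed by absorbing the top-order contribution using the smallness hypothesis $\|a\|_{L^{7/2}_1}\leq\epsilon_6$. The one genuine gap is your claim that for $l\geq 3$ the space $L^{7/2}_{l-1}$ is a Banach algebra ``since $(l-1)\cdot 7/2>7$'': at $l=3$ this reads $2\cdot\tfrac{7}{2}=7$, which is exactly the critical exponent $kp=n$ in dimension seven, and $L^{7/2}_{2}(\R^7)$ is \emph{not} closed under multiplication --- its elements need not be bounded, and the term $\|\nabla^{2}a\cdot a\|_{L^{7/2}}$ cannot be closed by H\"older from $\nabla^{2}a\in L^{7/2}$ and $a\in L^{q}$, $q<\infty$. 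So your inductive step at $l=3$ fails as written. The repair is precisely the device you already deploy at $l=2$: estimate $\|\nabla^{2}a\cdot a\|_{L^{7/2}}\lesssim\|\nabla^{2}a\|_{L^{7}}\|a\|_{L^{7}}\lesssim\|a\|_{L^{7/2}_{3}}\|a\|_{L^{7/2}_{1}}$ and absorb the top-order factor using $\epsilon_6$. The paper handles all $l$ uniformly this way, via the Leibniz estimate
\[
\|\beta a\wedge\beta a\|_{L^{7/2}_{l-1}}\leq C\,\|\beta a\|_{L^{7/2}_{1}}\sum_{k=1}^{l}\|\beta a\|_{L^{7/2}_{k}},
\]
whose $k=l$ term is absorbed by smallness; the honest algebra property only becomes available at $l\geq 4$, which is also how the paper arranges for $\epsilon_6$ to be independent of $l$.

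A minor point, shared with the paper's own proof: to seed the bootstrap you invoke \autoref{l5.3}, whose hypotheses include the boundary condition $\star a|_{\partial\mathbb{B}}=0$ that is not listed in the statement of \autoref{l5.7}. Some such input is genuinely needed, since for instance $a=\epsilon\,dx_{1}\otimes\xi$ with $\xi\in\su(n)$ constant is coclosed, small, and flat, yet $\|a\|_{L^{7/2}_{l}(B')}>0=p_{l}(0)$. This is an imprecision in the lemma's statement rather than in your argument, and it is harmless in the application, where the connection is already in Uhlenbeck gauge.
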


\begin{proof}
Let $\delta:= d^* \oplus \pi_7$ be the operator considered in the proof of \autoref{l5.3}, and $\beta: \mathbb{B} \to \R$ a cut-off function with $\beta|_{B'} \equiv 1$. The elliptic estimates of $\delta$ gives us 
\[
\|\beta a\|_{L^{7/2}_l} \leq const. \left(\|a\|_{L^{7/2}_{l-1}} + \|\beta a \wedge \beta a\|_{L^{7/2}_{l-1}} + \|\pi_7(F_A)\|_{L^{7/2}_{l-1}} \right).
\]
In general, the Leibinz rule gives us the Sobolev inequality:
\[
\|\beta a \wedge \beta a\|_{L^{7/2}_{l-1}} \leq const. \|\beta a\|_{L^{7/2}_1}  \cdot \sum_{k=1}^l \|\beta a\|_{L^{7/2}_k}. 
\]
So when $\|a\|_{L^{7/2}_1}$ is sufficiently small, one can rearrange the term containing $\|\beta a\|_{L^{7/2}_l}$ to get the desired estimate. To see why the constant $\epsilon_6$ can be chosen to be independent of $l$, we note that when $l \geq 4$ the Sobolev multiplication inequality becomes:
\[
\|\beta a \wedge \beta a\|_{L^{7/2}_{l-1}} \leq \|\beta a \|^2_{L^{7/2}_{l-1}}.
\]
So there is no need to do the rearrangement when $l \geq 4$. 
\end{proof}

\begin{proof}[Proof of \autoref{t5.1}]
Let $A$ be a $\pmb{\pi}$-perturbed $G_2$-instanton satisfying the equation $F_A \wedge \psi + \sigma_{\pmb{\pi}}(A) = 0$. We fix a positive number $r_0 > 0$ so such for each $x \in X$ one can find $r_x \geq r_0$ such that the geodesic ball $B_{r_x}(x)$ is contained in an orbifold chart $(\tilde{U}_x, G_x, \varphi_x)$. Let $\tilde{B}_{r_x}(x) \subset \tilde{U}_x$ be a ball containing a connected component of $\varphi_x^{-1}(B_{r_x}(x))$ and $\tilde{A}_x$ the corresponding equivariant connection on $\tilde{U}_x$. 

Since $\|F_A\|^2_{L^2} = 3\|\pi_7(F_A)\|^2_{L^2}$, we know when $\|\pmb{\pi}\|$ is sufficiently small, one can guarantee that 
\[
\hat{\mathcal{E}}(\tilde{A}_x, \tilde{B}_{r_x}(x)) <\epsilon_5 \text{ and } \|\pi_7(F_A)\|_{L^{7/2}(\tilde{B}_{r_x}(x))} < \epsilon_5, 
\]
where $\epsilon_5$ is the constant in \autoref{l5.6}. Then \autoref{l5.6} gives us a bound on $\|F_{\tilde{A}_x}\|_{L^{7/2}(\tilde{B}'_x)}$ for some interior ball $\tilde{B}'_x \Subset \tilde{B}_{r_x}(x)$. We can choose $\|\pmb{\pi}\|$ to be small so that the assumption of \autoref{t5.2} applies. We conclude that after possible gauge transformations over $\tilde{U}_x$, the connection $\tilde{A}_x = d + \tilde{a}_x$ satisfies $d^* \tilde{a}_x = 0$ and 
\[
\|\tilde{a}_x\|_{L^{7/2}_1(\tilde{B}'_x)} \leq const. \left(r_0^{-3} \|F_A\|_{L^2(X)} + \|\pi_7(F_A)\|_{L^{7/2}(X)} \right). 
\]
Now \autoref{l5.7} tells us that 
\begin{equation}\label{e5.6}
\|\tilde{a}_x\|_{L^{7/2}_l(\tilde{B}'_x)} \leq p_l\left(r_0^{-3} \|F_A\|_{L^2(X)} + \|\pi_7(F_A)\|_{L^{7/2}_{l-1}(X)} \right). 
\end{equation}
In order to control $\|\pi_7(F_A)\|_{L^{7/2}_{l-1}(X)}$, we recall that $\pi_7(F_A) = -\star(\psi \wedge \star \sigma_{\pmb{\pi}}(A))$. Then \autoref{l4.2} tells us 
\[
\|\pi_7(F_A)\|_{L^{7/2}_{l-1}(X)} \leq const. \|\pi_7(F_A)\|_{L^2_{l-1}(X)} \leq const. \left(1+\|A^{\dagger} - A^{\dagger}_0\|^{l-1}_{L^2_{l-1}(X)} \right) \|\pmb{\pi}\|,
\]
where the constant depends on $\psi$ and the orbifold chart $(\tilde{U}_x, G_x, \varphi_x)$. A priori, we only have $A^{\dagger} - A^{\dagger}_0 \in L^{7/2}_1$. The recursive inequality \eqref{e5.6} tells us that after a possible gauge transformation $(A^{\dagger} - A^{\dagger}_0)|_{U_x} \in L^{7/2}_2(U_x)$. The gauge patching argument in \cite[Chapter 7]{W05} enables us to find a global gauge transformation $g$ so that $g \cdot A^{\dagger} - A^{\dagger}_0 \in L^{7/2}_2(X)$. Iterating this process, we see that $A^{\dagger} - A^{\dagger}_0 \in L^{7/2}_l$ for any $l$ after possible gauge transformations. 

Now we have proved that the restriction of each $\pmb{\pi}$-perturbed $G_2$-instanton to a small orbifold chart admits a connection $1$-form that is bounded in $L^{7/2}_l$ for any $l$, under the assumption that $\|\pmb{\pi}\|$ is sufficiently small. So given any sequence of $\pmb{\pi}$-perturbed $G_2$-instantons, we can find a subsequence, using a diagonalization procedure, so that the restriction of this subsequence to a given small orbifold chart converges in $C^{\infty}$-topology to a $\pmb{\pi}$-perturbed $G_2$-instanton, after possible gauge transformations. We then cover $X$ with finitely many small orbifold charts, and run the argument over each of them. Finally, we pathch the gauge transformations applied to each charts as in \cite[Chapter 7]{W05} to get the desired convergence on $X$. This proves the compactness of $\M_{\phi,\pmb{\pi}}(X, P^{\dagger})$. 
\end{proof}

\section{\large \bf Deformation Invariants}\label{s6}

Now we come to the point of defining an enumerative invariant for $G_2$-orbifolds by incorporating the techniques developed above. We first pin-down the precise definitions, then calculate the invariants for some orbifolds appeared in Joyce's construction \cite{J96}. 

\subsection{\em Definition of Invariants} \label{ss6.1} \hfill

\vspace{3mm}

\begin{dfn}\label{d6.1}
Let $(X, \phi)$ be a compact $G_2$-orbifold, and $P^{\dagger} \to X$ an admissible principal $U(r)$-orbibundle. 
\begin{enumerate}[label=(\alph*)]
\item When $(X, \phi)$ is simple, and either $r = 2$ or $c_1(P^{\dagger})$ divisible by $2$, we define 
\[
n_{\phi}(X, P^{\dagger}):= \# \M^*_{\phi, \pmb{\pi}}(X, P^{\dagger}) \in \Z
\]
as the signed counting of irreducible $\pmb{\pi}$-perturbed projective $G_2$-instantons with respect to a small generic perturbation $\pmb{\pi}$ and orientation data. Moreover, for fixed $c \in H^2_{\orb}(X; \Z)$, we write 
\[
n^c_{\phi}(X):= \sum_{P^{\dagger}} n_{\phi}(X, P^{\dagger}) \in \Z,
\]
where the sum ranges over all rank-$2$ admissible bundles $P^{\dagger}$ satisfying $c_1(P^{\dagger}) = c$.

\item When $(X, \phi)$ is not simple, we define 
\[
n_{\phi}(X, P^{\dagger}):= \# \M^*_{\phi, \pmb{\pi}}(X, P^{\dagger}) \in \Z/2
\]
as the counting of irreducible $\pmb{\pi}$-perturbed projective $G_2$-instantons with respect to a small generic perturbation $\pmb{\pi}$. 

\item When $(X, \phi)$ is simple and satisfies $H^1_{\orb}(X; \Z) = 0$, and $P^{\dagger}$ is an $SU(2)$-bundle with $c_2(P^{\dagger}) = 0 \in H^4_{\orb}(X;\Z)$, we define 
\[
n_{\phi}(X, P^{\dagger}):= \# \mathcal{M}^*_{\phi, \pmb{\pi}}(X, P^{\dagger}) \in \Z
\] 
as the signed counting of irreducible $\pmb{\pi}$-perturbed (non-projective) $G_2$-instantons with respect to a small generic perturbation $\pmb{\pi}$ and orientation data. Moreover, we write 
\[
n_{\phi}(X):= \sum_{P^{\dagger}} n_{\phi}(X, P^{\dagger}) \in \Z,
\]
where the sum ranges over all $SU(2)$-bundles over $X$ with vanishing $c_2$. 
\end{enumerate}
\end{dfn}

The items in \autoref{d6.1} require some explanation. In (a), we define $n^c_{\phi}(X)$ as the sum of admissible bundles with $c_1 =c$. The requirement $c^2_1 - 4c_2 = 0$ implies that there are only finitely many choices of $c_2$. Note that the higher homotopy groups $\pi_n(U(2))$ with $4 \leq n \leq 7$ are all finite. Thus obstruction theory tells us there are only a finite number of $U(2)$-bundles over $X$ once $c_1$ and $c_2$ are determined. So the sum defining $n^c_{\phi}(X)$ is finite. In $(c)$, the requirement $H^1_{\orb}(X; \Z) = 0$ implies that there are no non-trivial flat reducible $SU(2)$-connections, and the unique reducible flat $SU(2)$-connection is isolated from the irreducible ones. These properties continues to hold if the perturbation $\pmb{\pi}$ is chosen to be sufficiently small. Thus the counting that defines $n_{\phi}(X)$ makes sense. 

As for the orientation data mentioned in \autoref{d6.1}, we refers to Joyce--Upmeier \cite{JU21} for the manifold case where they considered `flag structures' as additional inputs. We believe such a work carries through the orbifold case after unraveling the corresponding homotopy theory. We choose not to illustrate the orientation data here for the reason of lacking practical applications. If the moduli space is non-empty, one can alway fix an orientation at one point, and use it to orient other points via the spectral flow of the deformation operator along a path. 

\begin{thm}\label{t6.2}
All invariants in \autoref{d6.1} are independent of the choices of the perturbation $\pmb{\pi}$, and invariant under $C^0$-deformation of torsion-free $G_2$-structures. 
\end{thm}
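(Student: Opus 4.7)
The strategy is a standard parametrized cobordism argument. For perturbation independence, given two small generic perturbations $\pmb{\pi}_0, \pmb{\pi}_1 \in \mathscr{P}$ with $\|\pmb{\pi}_i\| < \epsilon_0$, I would connect them by the linear path $\pmb{\pi}_t = (1-t)\pmb{\pi}_0 + t\pmb{\pi}_1$, which remains small throughout. For $C^0$-deformation invariance, given two torsion-free $G_2$-structures $\phi_0, \phi_1$ joined by a continuous path $(\phi_t)_{t \in [0,1]}$ of torsion-free $G_2$-structures (such paths exist locally in $C^0$-topology by openness of the torsion-free condition within each Joyce local moduli chart), one does both at once and studies the parametrized moduli space
\[
\widehat{\M}^* := \{(t, [A]) : t \in [0,1], \; A \in \M^*_{\phi_t, \pmb{\pi}_t}(X, P^\dagger)\}.
\]
The goal is to exhibit $\widehat{\M}^*$ as a compact, oriented, smooth $1$-manifold with boundary $\M^*_{\phi_0, \pmb{\pi}_0} \sqcup (-\M^*_{\phi_1, \pmb{\pi}_1})$, from which invariance of the (signed or $\Z/2$) count follows.

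The three ingredients are compactness, transversality, and orientation of $\widehat{\M}^*$. For transversality, the derivative map of \autoref{l4.4} acquires one additional parameter direction; the argument there adapts verbatim to show that the parametrized map
\[
\mathcal{F}: [0,1] \times \mathscr{P} \times \A^*_k \longrightarrow \mathcal{K}_{k-1}, \quad (t, \pmb{\pi}, A) \longmapsto \grad \cs_{\phi_t, \pmb{\pi}}(A)
\]
is transverse to the zero section (the density argument involving Donaldson's lemma applies fiberwise in $t$). A Sard--Smale argument with endpoints fixed then yields a generic smoothing path $(\pmb{\pi}_t)$ for which $\widehat{\M}^*$ is a smooth $1$-manifold with the stated boundary. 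The absence of reducibles along the path follows from admissibility: \autoref{l3.9} and the corollary following \autoref{d3.10} rule out reducibles for \emph{any} torsion-free $G_2$-structure on $X$, and this closed condition persists under small perturbations $\pmb{\pi}_t$. Orientation of $\widehat{\M}^*$ follows from an index-theoretic refinement of \autoref{p3.13}: the family $L_{\phi_t, A}$ yields a determinant line bundle over $[0,1] \times \A_k$, and the mod-$2$ index computation in \autoref{l3.15} depends only on topological data of $P^\dagger$ and the fixed simple singular strata of $X$, hence persists in families, producing a trivial $\G_{k+1}$-action on the determinant line over the parameter interval.

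The main technical obstacle will be verifying \emph{uniform} compactness along the path. The constant $\epsilon_0$ in \autoref{t5.1} is built out of the Uhlenbeck gauge constant $\epsilon_1$, the elliptic constants for the operator $\delta = d^* \oplus \pi_7 \circ d$ in \autoref{l5.3}, the monotonicity constant $\mathfrak{c}_3$ in \autoref{l5.4}, and the small-energy threshold of \autoref{l5.6}, all of which depend on the $G_2$-structure through the metric $g_\phi$, the projection $\pi_7^\phi$, and the coassociative form $\psi = \star_\phi \phi$. I would need to verify that these constants vary continuously in $\phi$ with respect to the $C^0$-topology (the crucial identity $\|F_A\|_{L^2}^2 = 3\|\pi_7(F_A)\|_{L^2}^2$ used repeatedly is an equality at each point, so only the projection $\pi_7^\phi$ matters, and this varies continuously with $\phi$). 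By compactness of $[0,1]$, a uniform $\epsilon_0$ then exists along the path, so that $\|\pmb{\pi}_t\| < \epsilon_0$ throughout ensures the compactness of every slice, and hence of $\widehat{\M}^*$ by the $C^\infty$-convergence argument of \autoref{t5.1} applied with $t$-dependent data. Combined with the three points above, this closes the cobordism and establishes \autoref{t6.2}.
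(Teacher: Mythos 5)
Your overall strategy (a parametrized cobordism in $(\pmb{\pi}, \phi)$) is essentially the paper's, which runs the two independence statements as two structurally identical cobordism arguments. The step that fails as written is the claim that a Sard--Smale argument produces a \emph{smooth} $1$-manifold $\widehat{\M}^*$. The hypothesis is only that $t \mapsto \phi_t$ is a $C^0$-path of torsion-free $G_2$-structures, so the parametrized map $(t, \pmb{\pi}, A) \mapsto \grad \cs_{\phi_t, \pmb{\pi}}(A)$ is merely continuous in $t$: it is not differentiable in the $t$-direction, transversality of the full map is not even defined, and Sard--Smale (which requires $C^q$-regularity in all the variables used to achieve genericity) does not apply. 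The paper's resolution is precisely the observation you make parenthetically but then fail to exploit: by \autoref{l4.4} the partial derivative in the $(\pmb{\pi}, A)$-directions alone is already surjective at every zero, so for a single fixed generic $\pmb{\pi}$ the implicit function theorem exhibits the parametrized moduli space only as a \emph{$C^0$} $1$-manifold. This weaker regularity still suffices for the cobordism count, but it forces a corresponding adjustment in your orientation step: one cannot transport orientations ``along'' a curve that is only continuous, so the paper instead trivializes the determinant line of the index bundle on an open neighborhood of the parametrized moduli space in $[0,1] \times \A_k/\G_{k+1}$ and transports orientations inside that neighborhood. Your proposed ``index-theoretic refinement in families'' would need to be phrased this way to make sense.

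Two smaller remarks. Your worry about uniformity of the compactness constants $\epsilon_0(\phi_t)$ along the path is legitimate and is the right thing to check; the paper passes over it, implicitly using continuity of $g_{\phi_t}$, $\pi_7^{\phi_t}$ and $\psi_t$ in $t$ together with compactness of $[0,1]$, exactly as you outline, so this part of your proposal is fine and arguably more careful than the published argument. On the other hand, the aside asserting that $C^0$-paths of torsion-free structures ``exist locally by openness in Joyce's moduli chart'' is not needed: the theorem asserts invariance \emph{given} such a path, not the existence of one.
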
 

\begin{proof}
The independence of perturbations follows from a standard argument. Given two small generic perturbations $\pmb{\pi}_0$ and $\pmb{\pi}_1$ so that the moduli spaces $\M_{\phi, \pmb{\pi}_i}(X, P^{\dagger})$ are both regular. We can choose a path of small perturbations $\gamma: [0, 1] \to \mathscr{P}$, connecting $\pmb{\pi}_0$ and $\pmb{\pi}_1$, transverse to the image $\pr_1(\mathcal{F}^{-1}(0))$, where $\mathcal{F}$ is the map defined in \eqref{e4.10}, and $\pr_1: \mathscr{P} \times \A_k \to \mathscr{P}$ is the projection onto the first factor. Then transversality tells us that 
\[
\M^*_{\phi, \gamma}:= \bigcup_{t \in [0, 1]} \M^*_{\phi, \gamma(t)}(X, P^{\dagger})
\]
gives an (oriented) cobordism from $\M^*_{\phi, \pmb{\pi}_0}(X, P^{\dagger})$ to $\M^*_{\phi, \pmb{\pi}_1}(X, P^{\dagger})$. Thus the counting with respect to $\pmb{\pi}_0$-instantons and $\pmb{\pi}_1$-instantons coincides. 

Suppose now we have a continuous path $\Phi:=(\phi_t)_{t\in [0,1]}$ of torsion-free $G_2$-structures on $X$. Then we consider 
\[
\M_{\Phi}(X, P^{\dagger}) : = \bigcup_{t \in [0, 1]} \M_{\phi_t}(X, P^{\dagger}) \subset [0, 1] \times \A_k/\G_{k+1}. 
\]
We can consider the map analogous to \eqref{e4.10}: 
\[
\begin{split}
\mathcal{F}_{\Phi}: \mathscr{P} \times \A^*_k \times [0, 1] & \longrightarrow \mathcal{K}_{k-1} \\
(\pmb{\pi}, A, t) & \longmapsto \grad \cs_{\phi_t, \pmb{\pi}}(A) 
\end{split}
\]
Due to our assumption, $\mathcal{F}_{\Phi}$ is smooth on the first two components and only continuous on the third. However, \autoref{l4.4} tells us that the partial derivatives of $\mathcal{F}_{\Phi}$ on the first two components are already transverse to the zero section of $\mathcal{K}_{k-1}$. Thus the implicit function theorem tells us for a generic $\pmb{\pi} \in \mathscr{P}$, the perturbed moduli space $\M_{\Phi, \pmb{\pi}}(X, P^{\dagger})$ is a $C^0$ $1$-manifold. Since each $\phi_t$ is torsion-free, \autoref{t5.1} tells us that $\M_{\Phi, \pmb{\pi}}(X, P^{\dagger})$ is compact when $\pmb{\pi}$ is sufficiently small. Thus we get an (oriented) cobordism from $\M_{\phi_0}(X, P^{\dagger})$ to $\M_{\phi_1}(X, P^{\dagger})$, which implies the deformation invariance. Here the oriented corbordism means on a neighborhood of $\M_{\Phi}(X, P^{\dagger})$ in $[0, 1] \times \A_k/\G_{k+1}$, the determinant line of the index bundle of the deformation complex is trivialized so that the orientation is transported inside this neighborhood rather than `along the $C^0$ curve'. 
\end{proof}

Recall in the introduction section, an irreducible projectively flat connection $A^{\dagger}$ of $P^{\dagger}$ is said to be non-degenerate if $H^1(X, \rho_A) = 0$, where $\rho_A$ is the representation $\rho_A: \pi^{\orb}_1(X) \to SO(r^2 - 1)$ given by the induced flat connection on the bundle $\su(P^{\dagger})$. Equivalently, if we write $\ad_A$ for the local system arising as $\g_P$ twisted by $A$, then $H^1(X, \ad_A) = H^1(X, \rho_A)$. \autoref{p1.5} follows from the following observation.

\begin{lem}\label{l6.3}
Let $[A^{\dagger}]$ be a gauge class of irreducible projectively flat connection on an admissible bundle $P^{\dagger}$ of a $G_2$-orbifold $(X, \phi)$. Then $[A^{\dagger}]$ is regular as a point in the moduli space $\M^*_{\phi}(X, P^{\dagger})$ if and only if $[A^{\dagger}]$ is a non-degenerate projectively flat connection.
\end{lem}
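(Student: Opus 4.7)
The plan is to identify the cohomologies of the deformation complex $E_A(X)$ with twisted de Rham cohomologies $H^\bullet(X;\mathrm{ad}_A)$, exploiting that the connection $A$ induced on $\g_P = \su(P^\dagger)$ is flat: projective flatness of $A^\dagger$ makes $F_{A^\dagger}$ central, so the adjoint curvature vanishes and $d_A^2=0$ on $\g_P$-valued forms. I abbreviate $\Omega^i := \Omega^i(X;\g_P)$ and write $\mathrm{ad}_A$ for the associated local system; the goal is the equivalence $H^2(E_A)=0 \Leftrightarrow H^1(X;\mathrm{ad}_A)=0$.

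First I would dispose of the outer cohomologies of $E_A(X)$. Irreducibility of $[A^\dagger]$ gives $H^0(E_A) = \ker(d_A\colon\Omega^0\to\Omega^1) = H^0(X;\mathrm{ad}_A) = 0$. Poincar\'{e} duality for the self-dual local system $\mathrm{ad}_A$ on the compact oriented orbifold $X$ (equivalently, the Hodge $\star$-isomorphism $\mathcal{H}^0\xrightarrow{\sim}\mathcal{H}^7$ for the flat twisted Laplacian) then forces $H^3(E_A) = \Omega^7/d_A\Omega^6 \cong H^7(X;\mathrm{ad}_A)\cong H^0(X;\mathrm{ad}_A)=0$. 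Substituting these into the index identity $0=\ind L_A = h^1(E_A)+h^3(E_A)-h^0(E_A)-h^2(E_A)$ from \autoref{ss3.2} yields $h^1(E_A) = h^2(E_A)$.

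Next I would prove $H^1(E_A)\cong H^1(X;\mathrm{ad}_A)$. Since both quotients are modulo $d_A(\Omega^0)$, it suffices to show
\[
\ker\bigl(\alpha\mapsto d_A\alpha\wedge\psi\colon\Omega^1\to\Omega^6\bigr) \;=\; \ker\bigl(d_A\colon\Omega^1\to\Omega^2\bigr).
\]
The inclusion $\supseteq$ is immediate. For $\subseteq$, the algebraic map $\Lambda^2\to\Lambda^6$, $\omega\mapsto\omega\wedge\psi$, has kernel exactly $\Lambda^2_{14}$: the kernel is $G_2$-invariant, hence a sum of irreducible pieces, and a direct computation with $\iota_v\phi\in\Lambda^2_7$ for nonzero $v$ shows $\Lambda^2_7$ is not in it, so by rank counting the kernel equals $\Lambda^2_{14}$. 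Hence $d_A\alpha\wedge\psi=0$ is equivalent to $\pi_7(d_A\alpha)=0$. Applied to $\g_P$-valued forms via the normalized trace on $\su(r)$, the pointwise identity \eqref{e2.7} gives
\[
\tr(d_A\alpha\wedge d_A\alpha)\wedge\phi \;=\; -\tfrac{1}{2r}\bigl(2|\pi_7(d_A\alpha)|^2 - |\pi_{14}(d_A\alpha)|^2\bigr)\,\vol.
\]
Since $d_A^2=0$ and $\tr$ is $A$-parallel, $\tr(d_A\alpha\wedge d_A\alpha) = d\,\tr(\alpha\wedge d_A\alpha)$; combined with $d\phi=0$ and Stokes' theorem on the compact orbifold $X$, one obtains $\|\pi_{14}(d_A\alpha)\|_{L^2}^2 = 2\|\pi_7(d_A\alpha)\|_{L^2}^2$. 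Hence $\pi_7(d_A\alpha)=0$ forces $d_A\alpha=0$, as required.

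Assembling the pieces, regularity ($h^2(E_A)=0$) is equivalent to $h^1(E_A)=0$ by the index identity, equivalent to $H^1(X;\mathrm{ad}_A)=0$ by the preceding identification, which is precisely non-degeneracy. The one technical point to verify is that Hodge theory and Stokes' theorem pass verbatim to the compact effective orbifold setting, but this reduces through the integration convention of \autoref{ss2.2} and an orbifold partition of unity to the $G_\alpha$-equivariant manifold statements on each chart, so no genuinely new difficulty arises.
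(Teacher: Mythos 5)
Your proposal is correct and follows essentially the same route as the paper's proof: kill $H^0(E_A)$ and $H^3(E_A)$ by irreducibility and duality, use the index identity to get $h^1=h^2$, and identify $H^1(E_A)$ with $H^1(X,\ad_A)$ via the claim that $d_A a\wedge\psi=0$ forces $d_A a=0$, proved by the pointwise identity \eqref{e2.7} together with $F_A=0$, $d\phi=0$, and Stokes' theorem. The only differences are expository — you derive $\ker(\,\cdot\wedge\psi)=\Lambda^2_{14}$ by hand where the paper cites \autoref{l3.3}, and you spell out the $H^3$ vanishing — so no further comment is needed.
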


\begin{proof}
Recall for the deformation complex $E_A(X)$, irreducibility of $A$ implies that 
\[
H^0(E_A(X)) = H^3(E_A(X)) = 0 \qquad H^1(E_A(X)) \simeq H^2(E_A(X)).
\] 
To prove the lemma, it suffices to identify $H^1(E_A(X))$ with $H^1(X, \ad_A)$. We claim that $\forall a \in \Omega^1(X, \g_P)$ 
\[
d_A a \wedge \psi = 0 \Longleftrightarrow d_A a = 0.
\]
To prove the claim, suppose $d_A a \wedge \psi = 0$, equivalently $\pi_7(d_A a) = 0$ by \autoref{l3.3}. Then \eqref{e2.7} tells us that 
\[
|\pi_{14}(d_A a)|^2 \vol  = 2r \tr(d_A a \wedge d_A a) \wedge \phi) = 2r d \left(\tr(a \wedge d_A a) \wedge \phi)\right),
\]
where the last equality uses the fact that $A$ is flat. Integrating over $X$ gives us that $\pi_{14}(d_A a) = 0$. Thus $d_A a = 0$. This proves the claim. The identification between $H^1(E_A(X))$ and $H^1(X, \ad_A)$ follows from the claim directly. 
\end{proof}

\begin{proof}[Proof of \autoref{p1.5}]
\autoref{l6.3} tells us that $\M_{\phi}(X, P^{\dagger})$ is regular for any torsion-free $G_2$-structure $\phi$ under the given assumptions. Thus we can choose $\pmb{\pi} = 0$ for defining $n_{\phi}(X, P^{\dagger})$. In this case, the cohomology of the deformation complexes are identified for all torsion-free $G_2$-structures canonically by \autoref{l6.3}. Thus the counting with signs agrees for different torsion-free $G_2$-structures. 
\end{proof}

When the $G_2$-orbifold is flat, Walpuski \cite{W13} gave a simpler criterion to justify non-degeneracy by appealing to the Weitzenböck formula of Dirac operators. Suppose $A^{\dagger}$ is a projectively flat connection. Then the flat Levi--Civita connection on $X$ coupled with the flat connection $\ad_A$ on $\g_E$ gives a representation $\rho^{\LC}_A: \pi_1^{\orb}(X) \to \Aut((\Lambda^1 \otimes \g_P)|_{x_o})$ for a reference non-singular point $x_o \in X$.  

\begin{lem}[{\cite[Proposition 9.2]{W13}}]\label{l6.4}
Let $(X, \phi)$ be a flat $G_2$-orbifold, and $A^{\dagger}$ a projectively flat connection on a $U(r)$-bundle $P^{\dagger}$ of $X$. Then $A^{\dagger}$ is non-degenerate if and only if the representation $\rho^{\LC}_A$ fixes no non-trivial vectors in $(\Lambda^1 \otimes \g_P)|_{x_o}$. 
\end{lem}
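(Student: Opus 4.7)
The plan is to combine Hodge theory with a Weitzenböck computation that collapses the twisted Hodge Laplacian to the rough Laplacian under the two hypotheses (flat base, projectively flat connection). First, unwind the definition of non-degeneracy: $H^1(X, \rho_A)$ is the cohomology of the local system $\ad_A$ associated to the flat bundle $(\g_P, A)$, so by the orbifold Hodge theorem (applied chart-by-chart and glued via the $G$-equivariant Laplacian, which is elliptic and self-adjoint on the compact orbifold $X$) one has a canonical identification
\[
H^1(X,\rho_A) \;\cong\; \ker \bigl(\Delta_A \colon \Omega^1(X,\g_P)\to \Omega^1(X,\g_P)\bigr),
\]
where $\Delta_A = d_A d_A^* + d_A^* d_A$ is the twisted Hodge Laplacian. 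So the task is to identify $\ker \Delta_A$ with the $\rho^{\LC}_A$-fixed subspace of $(\Lambda^1\otimes\g_P)|_{x_o}$.

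Next, apply the Weitzenböck formula for the twisted Hodge Laplacian on $1$-forms with values in $\g_P$. The standard formula reads
\[
\Delta_A \;=\; \nabla_A^*\nabla_A \;+\; \operatorname{Ric} \;+\; \mathfrak{R}(F_A),
\]
where $\nabla_A$ is the coupling of the Levi--Civita connection on $T^*X$ with the connection $A$ on $\g_P$, and $\mathfrak{R}(F_A)$ is the zeroth-order curvature endomorphism. Since $A^{\dagger}$ is projectively flat, the induced connection $A$ on the $PU(r)$-bundle satisfies $F_A = 0$, so $\mathfrak{R}(F_A) = 0$. Since $(X,\phi)$ is a flat $G_2$-orbifold, the Levi--Civita curvature vanishes, hence $\operatorname{Ric} = 0$. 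Consequently $\Delta_A = \nabla_A^*\nabla_A$. Pairing with $a \in \Omega^1(X,\g_P)$ and integrating by parts over $X$ (valid on a compact orbifold without boundary), one obtains
\[
\langle \Delta_A a,a\rangle_{L^2} \;=\; \|\nabla_A a\|_{L^2}^2,
\]
so $\Delta_A a = 0$ if and only if $\nabla_A a = 0$, i.e.\ $a$ is parallel for the coupled connection on $\Lambda^1\otimes\g_P$.

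Finally, because the connection on $\Lambda^1\otimes\g_P$ used above is flat (product of two flat connections), parallel transport along orbipaths gives a bijection
\[
\bigl\{a\in C^\infty(X,\Lambda^1\otimes\g_P) : \nabla_A a = 0\bigr\} \;\xrightarrow{\;\sim\;}\; \bigl((\Lambda^1\otimes\g_P)|_{x_o}\bigr)^{\rho^{\LC}_A},
\]
sending $a$ to $a(x_o)$. This is the standard correspondence between parallel sections of a flat vector orbibundle over a connected orbifold and holonomy-fixed vectors in a chosen fiber, as reviewed in \autoref{ss2.3}; the holonomy of the coupled connection is tautologically $\rho^{\LC}_A$ by its construction from the flat Levi--Civita connection on $X$ and $\ad_A$ on $\g_P$. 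Stringing the three identifications together yields $H^1(X,\rho_A) = 0$ if and only if $\rho^{\LC}_A$ fixes no non-trivial vector in $(\Lambda^1\otimes\g_P)|_{x_o}$, which is the claim.

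The only substantive point that is not pure bookkeeping is the Weitzenböck step, and its only subtlety here is orbifold-theoretic: one must know that Hodge theory and the integration-by-parts identity hold on the compact orbifold $X$. This is standard since the Laplacian lifts to a $G_\alpha$-equivariant operator in each orbifold chart and the equivariant elliptic theory plus the chart-wise integration formula from \autoref{ss2.2} patches to the global statement without change. Once this is in place, every remaining step is a direct translation between harmonic, parallel, and holonomy-invariant.
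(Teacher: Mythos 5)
Your argument is correct and matches the approach behind the result the paper invokes (the paper itself gives no proof, deferring to Walpuski's Proposition 9.2, which is exactly this Weitzenböck collapse): flatness of $g_{\phi}$ kills the Riemannian curvature term, projective flatness kills the $F_A$-term, the Laplacian reduces to $\nabla_A^*\nabla_A$, and harmonic equals parallel equals holonomy-fixed, all of which survives the passage to a compact effective orbifold. The only cosmetic difference is that Walpuski runs the Lichnerowicz formula for the twisted Dirac operator on $(\Lambda^0\oplus\Lambda^1)\otimes\g_P$, whereas you use the Hodge--Weitzenböck formula directly on $\g_P$-valued $1$-forms; the two are equivalent in this setting.
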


\begin{cor}\label{c6.5}
Let $P^{\dagger}$ be an admissible bundle over a flat torsion-free $G_2$-orbifold $(X, \phi)$. Then the moduli space of irreducible projective instantons $\M^*_{\phi}(X, P^{\dagger})$ is regular. 
\end{cor}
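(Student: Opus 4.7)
The plan is to chain together the reductions established by the preceding lemmas and then reduce to Walpuski's argument in the flat manifold case. First I would note that since $P^{\dagger}$ is admissible, the vanishing $p_1(\g_P) = (r-1)c_1^2(P^{\dagger}) - 2rc_2(P^{\dagger}) = 0$ together with \eqref{e3.3} forces any projective $G_2$-instanton $A$ to satisfy $\mathcal{YM}(A)=0$, hence $F_A = 0$ on $P$; in other words, every point of $\M_\phi^*(X,P^{\dagger})$ is represented by a projectively flat $A^{\dagger}$, and one can work with the representation $\rho_A$ of $\pi_1^{\orb}(X)$.

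Next, by \autoref{l6.3}, the regularity of $[A^{\dagger}]$ as a point of $\M_\phi^*(X, P^{\dagger})$ is equivalent to the non-degeneracy condition $H^1(X,\ad_A)=0$; and by \autoref{l6.4}, non-degeneracy on a flat $G_2$-orbifold is equivalent to the assertion that the combined representation $\rho^{\LC}_A : \pi_1^{\orb}(X) \to \Aut((\Lambda^1 \otimes \g_P)|_{x_o})$ fixes no non-zero vector. So the whole problem collapses to this invariant-vector statement. A $\rho^{\LC}_A$-fixed vector is the same as a globally $\nabla^{\LC}\otimes \nabla_A$-parallel $\g_P$-valued $1$-form $a$ on $X$, so I need to show no such nontrivial $a$ exists.

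To rule out such an $a$, the strategy is to exploit the $G_2$-structure to convert $a$ into parallel sections of $\g_P$, which must vanish by irreducibility of $A$ (since $H^0(X,\ad_A)=0$). Concretely, contract $a$ with itself against the parallel tensors $\phi$ and $\psi=\star_\phi\phi$ to produce parallel tensors in $\g_P\otimes\g_P$; applying the Killing form on one $\g_P$-factor and using various symmetric/antisymmetric index manipulations on the form factor yields parallel sections of $\g_P$. From irreducibility, these must all vanish, and a standard linear-algebra analysis (carried out in Walpuski \cite{W13} in the manifold case) then forces $a=0$.

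The main obstacle is justifying that Walpuski's flat-manifold argument transfers to the orbifold setting. The cleanest route is to pass to a finite regular manifold cover: for the orbifolds arising in our setting, one has a Bieberbach-type description under which $X$ is covered by a flat $G_2$-torus, on which pulled-back bundles, connections, and parallel tensors all pull back equivariantly. A parallel $\g_P$-valued $1$-form on $X$ lifts to an equivariant parallel form upstairs, irreducibility is preserved (or handled by the corresponding equivariant statement), and Walpuski's argument applies verbatim. The descent of the vanishing conclusion is automatic. In the absence of a global manifold cover, one can instead run the pointwise $G_2$-representation-theoretic argument locally on each orbifold chart $(\tilde U_\alpha, G_\alpha)$, using that $G_\alpha \subset G_2$ acts on $\Lambda^1_0 \otimes \g_P|_0$ through $\rho^{\LC}_A$ and invoking Lemma 6.4 chart-by-chart.
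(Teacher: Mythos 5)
Your reduction chain coincides with the paper's: admissibility together with \eqref{e3.3} forces $F_A=0$, \autoref{l6.3} converts regularity of $[A^{\dagger}]$ into the non-degeneracy condition $H^1(X,\ad_A)=0$, and \autoref{l6.4} converts non-degeneracy into the statement that $\rho^{\LC}_A$ fixes no non-zero vector of $(\Lambda^1\otimes\g_P)|_{x_o}$. The paper's own proof stops at exactly that point and treats the fixed-vector statement as immediate from irreducibility. The material you add to justify that last step is where the genuine gaps lie.

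First, the contraction argument is not carried out and does not obviously close: contracting a parallel $a$ against the parallel tensors $\phi$ and $\psi$ and applying the Killing form produces parallel real forms on $X$ or parallel sections of $\g_P\otimes\g_P$, not in any evident way parallel sections of $\g_P$; and even a non-zero parallel real $1$-form is not a contradiction on a flat $G_2$-orbifold, since the rotational holonomy may fix vectors in $\R^7$. Moreover, \cite[Proposition 9.2]{W13} is precisely the criterion recorded as \autoref{l6.4}; it does not contain the ``standard linear-algebra analysis'' you attribute to it showing that irreducibility forces $a=0$ --- in that reference the fixed-vector condition is checked example by example. Second, the covering argument fails at the step you flag yourself: pulling back to the torus cover kills the rotational holonomy, so the statement upstairs is easy, but only for the pulled-back connection, whose restriction to the translation lattice need not remain irreducible; what is actually required is the equivariant statement on the cover, which is the original claim, so nothing is gained. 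The underlying difficulty is that a fixed vector of $\rho^{\LC}_A$ is an invariant of the tensor product of the rotational holonomy representation on $\R^7$ with $\ad\circ\Hol(A)$ on $\su(r)$, and a tensor product of two representations each without invariants can still have invariants; so irreducibility of $A$ does not formally imply the conclusion, and any complete argument (the paper is equally terse here) must use additional structure, such as the translation lattice in $\pi_1^{\orb}(X)$ acting trivially on $\R^7$.
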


\begin{proof}
Since every irreducible projective $G_2$-instanton $A^{\dagger}$ induces a flat connection $A$ on $\g_P$, we know that, after coupling with the flat Levi--Civita connection, $\rho^{LC}_A$ fixes no non-trivial vectors. 
\end{proof}

\subsection{\em Calculation of Invariants} \label{ss6.2} \hfill

\vspace{3mm}

In this subsection, we calculate the invariant $n_{\phi}(X, P^{\dagger})$ for some examples. 

\vspace{3mm}

\begin{exm}\label{ex6.1}
The first example we consider is the manifold $X = T^3 \times K3$ whose $G_2$-structure is induced from the metric given by the product of a flat metric on $T^3$ with a hyperkähler metric on $K3$. We may identify $T^3=S^1 \times S^1 \times S^1$, and write $\alpha, \beta, \gamma$ for embedded curves in $T^3$ representing the three factors respectively. Let $Q^{\dagger} \to T^3$ be the $U(2)$-bundle over $T^3$ with $c_1(Q^{\dagger}) = \PD[\gamma]$ and $c_2(Q^{\dagger}) = 0$, and $P^{\dagger} = \pr_1^*Q^{\dagger}$ the pull-back of $Q^{\dagger}$ to $X$ via the projection map. The bundle $P^{\dagger}$ is admissible since $\langle c_1(P^{\dagger}), \alpha \times \beta \rangle = 1$. 

Then the unperturbed moduli space corresponds to the representation variety:
\[
\mathcal{R}(P^{\dagger}) = \{\rho: \pi_1(T^3 \backslash \gamma) \to SU(2): \rho([\alpha, \beta]) = -1\}/\Ad.
\] 
After identifying $SU(2)$ with unit quaternions, one readily solves that $\rho(\alpha) = i$, $\rho(\beta) = j$, and $\rho(\gamma) = \pm 1$ up to the adjoint action. Thus the unperturbed moduli space $\M_{\phi}(X, P^{\dagger})$ consists of two points, say $\rho_{\pm}$, corresponding to $\rho_{\pm}(\gamma) = \pm 1$ respectively. To see why both $\rho_{\pm}$ are regular, we note that $P^{\dagger}$ is pulled-back from $Q^{\dagger}$ on $T^3$, so the Künneth formula tells us that 
\[
H^1(X, \ad_{\rho_{\pm}}) \simeq H^1(T^3, \ad_{\rho_{\pm}}) = 0. 
\]
Then invoking \autoref{l6.3}, we get the non-degeneracy of $\rho_{\pm}$. \cite[Lemma 6.3]{K04} tells us that one can find a path of connections on $Q^{\dagger}$ that joins $\rho_{\pm}$ whose spectral flow of the deformation operator for the flatness equation on $Q^{\dagger}$ is $4$. Over $T^3 \times K3$ the deformation complex $E_A(X)$ splits as the sum of the deformation complexes of the flat connections on $T^3$ and $ASD$ connections on $K3$ for pull-back connections. Since $K3$ is complex, if we pull-back the path above, the resulting spectral flow of the deformation operator of $G_2$-instantons joining $\rho_{\pm}$ is still even. After fixing an orientation on either of $\rho_{\pm}$, we conclude that 
\[
n_{\phi}(T^3 \times K3, P^{\dagger}) = 2. 
\]

In general, following the construction of Kronheimer \cite[Section 6.2]{K04}, one can find $U(r)$-bundles $P^{\dagger}_r$ over $T^3 \times K3$ so that $n_{\phi}(T^3 \times K3, P^{\dagger}_r) = r$ using the argument above. 
\end{exm}

\vspace{3mm}

\begin{exm}
The second example we consider arises from Example 3 in Joyce's list \cite{J96}. Let $T^7$ be the torus obtained by quotienting the standard $\R^7$ with axis-translations. Let's write $(x_1, x_2, ..., x_7)$ for the coordinates on $\R^7$. We denote the $x_i$-translation by $\tau_i: x_i \mapsto x_i+1$, and consider three involutions on $\R^7$ that all preserve the flat structure:
\begin{equation}
\begin{split}
\alpha: (x_1, ..., x_7) & \longmapsto (x_1, x_2, x_3, -x_4, -x_5, -x_6, -x_7) \\
\beta: (x_1, ..., x_7) & \longmapsto (x_1, -x_2, -x_3, x_4, x_5, \frac{1}{2}-x_6, -x_7) \\
\gamma: (x_1, ..., x_7) & \longmapsto (-x_1, x_2, -x_3, x_4, \frac{1}{2}-x_5, x_6, \frac{1}{2}-x_7) \\
\end{split}
\end{equation}
We write $\Gamma$ for the group generated by $\tau_1, ..., \tau_7$ and $\alpha, \beta, \gamma$. We let $X = \R^7/\Gamma$ equipped with the flat metric. From \cite[Lemma 12.2.2]{J00}, we know $X$ is a simple orbifold whose singular set consists of 12 disjoint copies of $T^3$. Since $\R^7$ is the universal cover of $X$, we know the orbifold fundamental group $\pi^{\orb}_1(X) = \Gamma$. The relations of generators in $\Gamma$ are generated by the following:
\begin{enumerate}[label=(\arabic*)]
\item $[\tau_i, \tau_j] = 1 \;\;  \forall i,j =1, ..., 7$.
\item $\alpha^2 = 1$, $\beta^2 = 1$, $\gamma^2 = 1$. 
\item $[\alpha, \beta] = \tau_6^{-1}$, $[\alpha, \gamma] = \tau_7^{-1}$, $[\beta, \gamma] = \tau_7^{-1}$. 
\item $\alpha\tau_i\alpha = \tau_i$ for $i = 1, 2, 3$, \; $\alpha \tau_j \alpha = \tau_j^{-1}$, for $j = 4, 5, 6, 7$. 
\item $\beta \tau_i \beta = \tau_i$ for $i=1, 4, 5$, \; $\beta \tau_j \beta = \tau_j^{-1}$, for $j=2, 3, 6, 7$. 
\item $\gamma \tau_i \gamma = \tau_i$, for $i=2, 4, 6$, \; $\gamma \tau_j \gamma = \tau_j^{-1}$ for $j = 1, 3, 5, 7$. 
\end{enumerate}

Now we determine all $SO(3)$-representations $\Hom(\pi^{\orb}_1(X), SO(3))$ up to conjugation based on the relations above. Let's write $V_4 = \{1, a, b, c\} \subset SO(3)$ for the Klein four-group given by
\[
a = 
\begin{pmatrix}
1 & & \\
& -1 & \\
& & -1 
\end{pmatrix}, \quad 
b = 
\begin{pmatrix}
-1 & & \\
& 1 & \\
& & -1 
\end{pmatrix}, \quad 
c = 
\begin{pmatrix}
-1 & & \\
& -1 & \\
& & 1 
\end{pmatrix}.
\]
We first recall an elementary fact in linear algebra and leave it as an exercise for the reader. 

\begin{lem}\label{l6.4}
For a nontrivial element $1 \neq g \in SO(3)$, we write $L_g$ for the rotation axis of $g$. Then 
\begin{enumerate}
\item If $aga = g$, then $g \in O(2)$ of the following form:
\[
g = 
\begin{pmatrix}
1 & & \\
& \cos \theta & \sin \theta \\
& - \sin \theta & \cos \theta
\end{pmatrix},  
 \quad
\begin{pmatrix}
 -1 & & \\
 & \cos \theta & \sin \theta \\
 & \sin \theta & - \cos \theta
\end{pmatrix}. 
\]

\item If $aga  = g^{-1}$, then either $g = a$ or $g = ar$ with $r \in SO(3)$ a $\pi$-rotation. In the latter case, we orient $L_r$ and $L_a$ by $\vec{o}_r$ and $\vec{o}_a$ so that the angle between them satisfies $\theta_{ar} \in [0, \pi/2]$. Then $g$ is the counterclockwise $2\theta_{ar}$-rotation about the axis $L_{ar}$ determined by $L_{ar} \perp L_a$ and $L_{ar} \perp L_r$. Here counterclockwise rotation is defined with respect to the oriented basis $(\vec{o}_r, \vec{o}_a)$. 
\end{enumerate}
Moreover, let's write $S(2) \subset SO(3)$ for the subset of $SO(3)$ consisting of elements satisfying (2). We have the following relations among $O(2)$, $S(2)$ and $V_4$. 
\begin{enumerate}[label=(\alph*)]
\item If $g^2 = 1$ and $g \in O(2) \backslash \{1, a\}$, then $\{1, a, g, ag\}$ forms a copy of $V_4$. 
\item If $g^2 = 1$ and $g \in S(2) \backslash \{1, a\}$, then $\{1, a, g, ag\}$ forms a copy of $V_4$. 
\item If $g \in O(2) \cap S(2) \backslash \{1, a\}$, then $\{1, a, g, ag\}$ forms a copy of $V_4$. 
\end{enumerate}
In particular, if $g \in SO(3)$ satisfies one of (a)--(c), then $g$ is conjugate to $b$ via an element in $SO(2)$. 
\end{lem}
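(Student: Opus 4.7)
\smallskip
\noindent\textbf{Proof plan.} I view $a=\mathrm{diag}(1,-1,-1)\in SO(3)$ geometrically as the half-turn about the line $L_a:=\R e_1$, so $a$ acts as $+\mathrm{id}$ on $L_a$ and $-\mathrm{id}$ on $L_a^{\perp}$. Two elementary facts drive every case: (i) for a nontrivial $g\in SO(3)$ with oriented rotation axis $L_g$ and rotation angle $\theta\in(0,2\pi)$, the conjugate $aga^{-1}=aga$ is the rotation by $\theta$ about $a(L_g)$; and (ii) $g^{-1}$ is the rotation by the same angle $\theta$ about the oppositely oriented axis $-L_g$.

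\smallskip
\noindent\textbf{Parts (1) and (2).} The relation $aga=g$ is equivalent to $a(L_g)=L_g$ as \emph{oriented} axes. The eigenspace decomposition of $a$ leaves only two possibilities: $L_g=L_a$ (the first matrix form, rotation about $e_1$), or $L_g\subset L_a^{\perp}$ with $a$ reversing orientation, which forces $\theta=\pi$ and produces the second matrix form. For $aga=g^{-1}$, comparing axes gives $a(L_g)=-L_g$; when $g$ is a half-turn this coincides with part (1), and otherwise $L_g$ must lie in $L_a^{\perp}$. To realize $g=ar$ when $g\neq a$, I set $r:=ag$: rewriting $aga=g^{-1}$ as $ag=g^{-1}a$ yields $r^{2}=(ag)(g^{-1}a)=a^{2}=1$, so $r$ is a nontrivial involution, hence a half-turn, and $r\neq 1$ because $g\neq a$. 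The claimed description of $g=ar$ as the rotation by $2\theta_{ar}$ about the common perpendicular of $L_a$ and $L_r$, oriented by the ordered frame $(\vec o_{r},\vec o_{a})$, is then the standard composition formula for two half-turns about distinct axes.

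\smallskip
\noindent\textbf{Parts (a)--(c) and the final assertion.} I would reduce each of (a), (b), (c) to the single statement that $g$ is a half-turn about some axis $L_g\subset L_a^{\perp}$. In (a), $O(2)$ consists of rotations about $L_a$ together with half-turns about axes in $L_a^{\perp}$; an involution $g\neq 1$ of the first type can only be $a$, which is excluded. In (b), part (2) identifies $S(2)\setminus\{a\}$ with rotations about axes in $L_a^{\perp}$, and the condition $g^{2}=1$ forces such a rotation to be by $\pi$. In (c), combining the two descriptions yields $O(2)\cap S(2)=\{1,a\}\cup\{\text{half-turns about axes in }L_a^{\perp}\}$, so removing $\{1,a\}$ leaves half-turns of the desired kind. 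Once $g$ is a half-turn with $L_g\subset L_a^{\perp}$, the axes $L_a$ and $L_g$ are perpendicular, $ag=ga$ is the third half-turn about $L_a\times L_g$, and $\{1,a,g,ag\}$ forms a copy of $V_4$. The final assertion follows because any line in $L_a^{\perp}$ can be rotated onto $L_b$ by an $SO(2)$-rotation about $L_a$, and conjugating $g$ by that rotation produces $b$.

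\smallskip
\noindent\textbf{Main obstacle.} Nothing here is deep, but part (2) demands careful bookkeeping of orientations of axes and signs of angles in order to justify the ``counterclockwise $2\theta_{ar}$'' description of $g=ar$. I would handle this by working in the orthonormal frame $(\vec o_{r},\vec o_{a},\vec o_{r}\times\vec o_{a})$ adapted to the two half-turn axes, in which $a$, $r$, and the product $ar$ become explicit $3\times 3$ matrices and the formula reduces to a one-line computation.
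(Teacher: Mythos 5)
Your proof is correct. Note that the paper offers no proof of this lemma at all --- it is explicitly ``left as an exercise for the reader'' --- so there is nothing to compare against; your argument (identifying the centralizer $O(2)$ of $a$ via its eigenspace decomposition, producing the half-turn $r=ag$ from $r^2=(ag)(g^{-1}a)=1$, and invoking the composition-of-two-half-turns formula) is the standard and intended one. The only wording to tighten: the claim that $aga=g$ is \emph{equivalent} to $a(L_g)=L_g$ as oriented axes is not literally true, since a half-turn about an axis in $L_a^{\perp}$ satisfies $aga=g$ while $a$ reverses the orientation of its axis; but you immediately account for exactly this case, so it is a matter of phrasing rather than a gap. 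The deferred orientation check in part (2) does work out: in the frame $(\vec{o}_r,\vec{o}_a,\vec{o}_r\times\vec{o}_a)$ one computes directly that $ar$ is the counterclockwise rotation by $2\theta_{ar}$ about the axis oriented by $\vec{o}_r\times\vec{o}_a$, matching the lemma's convention.
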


We write $\rho: \pi^{\orb}_1(X) \to SO(3)$ for a general representation.

\vspace{2mm}

{\noindent \bf Case (1)} Suppose $\rho(\alpha) = \rho(\beta) = \rho(\gamma) = 1$. Then we know $\rho(\tau_6) = \rho(\tau_7) = 1$ by relation (3). Then relations (1) and (4)--(6) imply that $\rho(\tau_i) \in V_4$, $i=1, ..., 5$, up to conjugation. 

\vspace{2mm}

{\noindent \bf Case (2)} Suppose $\rho(\alpha) = \rho(\beta) = 1$ and $\rho(\gamma) \neq 1$. Up to conjugation, we can take $\rho(\gamma) = a$. Then we know $\rho(\tau_6) = \rho(\tau_7) = 1$ as before. After further gonjugating by an element in $SO(2)$,  relations (4) and (5) imply that $\rho(\tau_i) \in V_4$ for $i = 2, 3, 4, 5$. 

\vspace{1mm}

{\em Case (2.1)} If $\rho(\tau_i) = 1$ for all $i \in \{2, 3, 4, 5\}$, then $\rho(\tau_1) \in S(2)$. 

{\em Case (2.2)} If not all of $\rho(\tau_i)$, $i \in \{2, 3, 4, 5\}$, are trivial, then $\rho(\tau_1) \in V_4$. 

\vspace{2mm}

{\noindent \bf Case (3)} Suppose $\rho(\alpha) = \rho(\gamma) = 1$, and $\rho(\beta) \neq 1$. Up to conjugation, we can take $\rho(\beta) = a$. Then we know $\rho(\tau_6) = \rho(\tau_7) = 1$. After further gonjugating by an element in $SO(2)$,  relations (4)--(6) imply that $\rho(\tau_i) \in V_4$ for $i=1, 3, 4, 5$. 

\vspace{1mm}

{\em Case (3.1)} If $\rho(\tau_i) = 1$ for all $i \in \{1, 3, 4, 5\}$, then $\rho(\tau_2) \in S(2)$.

{\em Case (3.2)} If not all of $\rho(\tau_i)$, $i \in \{1, 3, 4, 5\}$, are trivial, then $\rho(\tau_2) \in V_4$. 

\vspace{2mm}

{\noindent \bf Case (4)} Suppose $\rho(\beta) = \rho(\gamma) = 1$, and $\rho(\alpha) \neq 1$. Up to conjugation, we can take $\rho(\alpha) = a$. Then we know $\rho(\tau_6) = \rho(\tau_7) = 1$. After further gonjugating by an element in $SO(2)$,  relations (4)--(6) imply that $\rho(\tau_i) \in V_4$ for $i=1, 2, 3, 5$. 

\vspace{1mm}

{\em Case (4.1)} If $\rho(\tau_i) = 1$ for all $i \in \{1, 2, 3, 5\}$, then $\rho(\tau_4) \in S(2)$.

{\em Case (4.2)} If not all of $\rho(\tau_i)$, $i \in \{1, 2, 3, 5\}$, are trivial, then $\rho(\tau_4) \in V_4$. 

\vspace{2mm}

{\noindent \bf Case (5)} Suppose $\rho(\alpha) = 1$, and $\rho(\beta), \rho(\gamma) \neq 1$. Up to conjugation, we can take $\rho(\beta) = a$. Then we know $\rho(\tau_6) = \rho(\tau_7) = 1$. After further gonjugating by an element in $SO(2)$,  relation (3) implies that $\rho(\gamma) \in V_4$. Relations (4) and (5) imply that $\rho(\tau_i) \in V_4$ for $i=4, 5$. 

\vspace{1mm}

{\em Case (5.1)} If $\rho(\gamma) = \rho(\tau_i) = 1$ for all $i \in \{4, 5\}$, then relation (6) implies that $\rho(\tau_j) \in V_4$ for $j = 1, 3$. If $\rho(\tau_1) = \rho(\tau_3) = 1$, then $\rho(\tau_2) \in S(2)$. Otherwise $\rho(\tau_2) \in V_4$. 

{\em Case (5.2)} If $\rho(\gamma) \neq 1$, then relation (6) implies that $\rho(\tau_2), \rho(\tau_1) \in V_4$. If $\rho(\tau_2) = \rho(\tau_1) = 1$ and $\rho(\gamma) = a$, then $\rho(\tau_3) \in S(2)$. Otherwise $\rho(\tau_3) \in V_4$. 

{\em Case (5.3)} If $\rho(\gamma) = 1$, and at least one of $\rho(\tau_4)$, $\rho(\tau_5)$ is non-trivial, then relation (6) implies that $\rho(\tau_1), \rho(\tau_3) \in V_4$. If $\rho(\tau_i) \in \{1, a\}$ for all $i=1, 3, 4, 5$, then $\rho(\tau_2) \in S(2)$. Otherwise $\rho(\tau_2) \in V_4$. 
 
\vspace{2mm}

{\noindent \bf Case (6)} Suppose $\rho(\beta) = 1$, and $\rho(\alpha), \rho(\gamma) \neq 1$. Up to conjugation, we can take $\rho(\alpha) = a$. Then we know $\rho(\tau_6) = \rho(\tau_7) = 1$. After further gonjugating by an element in $SO(2)$,  relation (3) implies that $\rho(\gamma) \in V_4$. Relations (4) and (5) imply that $\rho(\tau_i) \in V_4$ for $i=2, 3$. 

\vspace{1mm}

{\em Case (6.1)} If $\rho(\gamma) = \rho(\tau_i) = 1$ for all $i \in \{2, 3\}$, then relation (6) implies that $\rho(\tau_j) \in V_4$ for $j = 1, 4$. If $\rho(\tau_1) = \rho(\tau_4) = 1$, then $\rho(\tau_5) \in S(2)$. Otherwise $\rho(\tau_5) \in V_4$. 

{\em Case (6.2)} If $\rho(\gamma) \neq 1$, then relation (6) implies that $\rho(\tau_1), \rho(\tau_4) \in V_4$. If $\rho(\tau_1) = \rho(\tau_4) = 1$ and $\rho(\gamma) = a$, then $\rho(\tau_5) \in S(2)$. Otherwise $\rho(\tau_5) \in V_4$. 

{\em Case (6.3)} If $\rho(\gamma) = 1$, and at least one of $\rho(\tau_2)$, $\rho(\tau_3)$ is non-trivial, then relation (6) implies that $\rho(\tau_1), \rho(\tau_4) \in V_4$. If $\rho(\tau_i) \in \{1, a\}$ for all $i=1, 2, 3, 4$, then $\rho(\tau_5) \in S(2)$. Otherwise $\rho(\tau_5) \in V_4$. 

\vspace{2mm}

{\noindent \bf Case (7)} Suppose $\rho(\gamma) = 1$, and $\rho(\alpha), \rho(\beta) \neq 1$. Up to conjugation, we can take $\rho(\alpha) = a$. Then we know $\rho(\tau_7) = 1$. After further gonjugating by an element in $SO(2)$, relations (1) and (6) imply that $\rho(\tau_i) \in V_4$ for $i=1 ,3 ,5$. 

\vspace{1mm}

{\em Case (7.1)} If $\rho(\tau_i) = 1$ for all $i \in \{1, 3, 5\}$, and $\rho(\beta) = a$, then we know $\rho(\tau_6) = 1$. After conjugating by an elements in $SO(2)$ (which we can arrange because $\rho(\tau_i) = 1$ for $i=1, 3, 5$), relations (4) and (5) imply that $\rho(\tau_2), \rho(\tau_4) \in V_4$. 

{\em Case (7.2)} If $\rho(\tau_i) = 1$ for all $i \in \{1, 3, 5\}$, and $\rho(\beta) \neq a$, we know $L_{\alpha \beta} \perp L_{\alpha}, L_{\beta}$, where $L$ means the rotation axis of the corresponding element in $SO(3)$ under $\rho$. Note that 
\begin{equation}\label{e6.2}
\beta\tau_2 \beta = \tau^{-1}_2 \qquad \alpha \tau_2 \alpha = \tau_2.
\end{equation}
We know that $\rho(\tau_2)$ is the $\pi$-rotation about $L_{\alpha \beta}$. The analogous relations of $\tau_4$ implies that $\rho(\tau_4)$ is also the $\pi$-rotation about $L_{\alpha \beta}$. Conjugating by elements in $SO(2)$, we can arrange that $\rho(\tau_2) = \rho(\tau_4) = b$. Then relation (5) implies that $\rho(\beta) \in V_4$, thus $\rho(\tau_6) = 1$. 

{\em Case (7.3)} If $\rho(\tau_i) \in \{1, a\}$ for all $i \in \{1, 3, 5\}$, and not all of them are trivial, relation (5) implies that $\rho(\beta) \in V_4$, thus $\rho(\tau_6) = 1$. Depending on $\rho(\beta) = a$ or not, the argument in the two cases above implies that $\rho(\tau_2), \rho(\tau_4) \in V_4$. 

\vspace{2mm}

{\noindent \bf Case (8)}  Suppose $\rho(\alpha), \rho(\beta), \rho(\gamma) \neq 1$. Up to conjugation, we can take $\rho(\alpha) = a$. 

\vspace{1mm}

{\em Case (8.1)} If $\rho(\beta) = \rho(\gamma) = a$, then relations (4)--(6) imply that, after further conjugating by an element in $SO(2)$, $\rho(\tau_i) \in V_4$ for all $i=1, ..., 5$ and $\rho(\tau_6) = \rho(\tau_7) = 1$.  

{\em Case (8.2)} If $\rho(\beta) = a$, $\rho(\gamma) \neq a$, then relations (4) and (5) imply that $\rho(\tau_i) \in V_4$ for $i=2, 3, 4, 5$, after further conjugating by an element in $SO(2)$. Relation (3) implies that $\rho(\tau_6) = 1$. Note that 
\[
\alpha \tau_1 \alpha = \tau_1 \qquad \gamma \tau_1 \gamma = \tau_1^{-1}.
\]
We know that $\rho(\tau_1)$ is the $\pi$-rotation about $L_{\alpha\gamma}$. If $\rho(\tau_i) = 1$ for $i=2, 3, 4, 5$, we can apply the $SO(2)$-conjugation here to arrange that $\rho(\tau_1) = b$. Otherwise relation (1) implies that $\rho(\tau_1) \in \{b, c\} \subset V_4$. Then relation (6) implies that $\rho(\gamma) \in \{b, c\}$, thus $\rho(\tau_7) = 1$. 

{\em Case (8.3)} If $\rho(\beta) \neq a$, $\rho(\gamma) = a$, then relations (4) and (6) imply that $\rho(\tau_i) \in V_4$ for $i = 1, 3, 4, 6$, after further conjugating by an element in $SO(2)$. Relation (3) implies that $\rho(\tau_7) = 1$. Note that 
\[
\alpha\tau_2\alpha = \tau_2 \qquad \beta \tau_2 \beta = \tau_2^{-1}.
\]
We know that $\rho(\tau_2)$ is the $\pi$-rotation about $L_{\alpha\beta}$. The same argument in Case (8.2) tells us that $\rho(\tau_2) \in \{b, c\}$. Then relation (5) implies that $\rho(\beta) \in \{b, c\}$, thus $\rho(\tau_6) = 1$. 

{\em Case (8.4)} If neither of $\rho(\beta)$ nor $\rho(\gamma)$ equals $a$, we consider 
\begin{equation}
\alpha \tau_1 \alpha = \tau_1 \qquad \beta \tau_1 \beta = \tau_1 \qquad \gamma \tau_1 \gamma = \tau^{-1}_1. 
\end{equation}
We know that $\rho(\tau_1)$ is the $\pi$-rotation about $L_{\alpha\gamma}$. If $\rho(\beta) \neq \rho(\gamma)$, we know $L_{\alpha\gamma} = L_{\beta\gamma} \perp L_{\alpha}, L_{\beta}, L_{\gamma}$, which is a contradiction. Thus $\rho(\beta) = \rho(\gamma)$. Conjugating by an element in $SO(2)$, we can arrange that $\rho(\tau_1) = b$. Then $[\beta, \tau_1] = 1$ tells us that $\rho(\beta) = \rho(\gamma) \in V_4$. Then relation (3) tells us that $\rho(\tau_6) = \rho(\tau_7) = 1$. This argument can be applied to $\tau_i$ for $i=2, 3, 4, 5$ to conclude that $\rho(\tau_i)$ is the $\pi$-rotation about $L_{\alpha\beta}$. Now we know 
\[
[b, \rho(\tau_i)] = [\rho(\tau_1), \rho(\tau_i)] = 1 = [\rho(\alpha), \rho(\tau_i)] = [a, \rho(\tau_i)].
\]
This implies that $\rho(\tau_i) \in V_4$ for $i = 2, 3, 4, 5$. 

\vspace{2mm}

Throwing away the reducible representations above, we obtain the following characterization of irreducible representations.

\begin{cor}
Up to conjugation, each irreducible representation $\rho: \pi_1^{\orb}(X) \to SO(3)$ takes the form
\[
\rho(\alpha), \rho(\beta), \rho(\gamma) \in V_4, \quad \rho(\tau_i) \in V_4, \; i = 1, ..., 5, \quad \rho(\tau_6)=\rho(\tau_7) = 1
\]
with $\im \rho = V_4$. 
\end{cor}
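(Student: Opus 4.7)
The plan is to extract the irreducible representations directly from the exhaustive case analysis above. Recall that $\rho: \pi_1^{\orb}(X) \to SO(3) \cong PU(2)$ is irreducible -- meaning the induced $PU(2)$-connection has stabilizer the center $\{\pm 1\}$ -- precisely when $\im \rho$ fixes no $1$-dimensional subspace of $\R^3$ setwise. Equivalently, $\im \rho$ is not contained in any copy of $O(2) \subset SO(3)$, where such a copy is the stabilizer of a line $L$ and consists of the rotations about $L$ together with the $\pi$-rotations about axes perpendicular to $L$.

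I would then go through the sub-cases of the preceding analysis, which split naturally into two families. The first consists of those sub-cases in which some $\rho(\tau_i)$ lies in $S(2) \setminus V_4$ -- namely $(2.1)$, $(3.1)$, $(4.1)$, and the first subparts of $(5.1)$--$(5.3)$ and $(6.1)$--$(6.3)$. By Lemma 6.4 the rotation axis $L_{\rho(\tau_i)}$ lies in $L_a^{\perp}$, where $a$ appears as $\rho(\alpha)$, $\rho(\beta)$, or $\rho(\gamma)$ depending on the case. Hence $L_{\rho(\tau_i)}$ is fixed by $\rho(\tau_i)$ itself, preserved setwise by the $\pi$-rotation $a$, and preserved by all remaining generators, which are forced to be trivial under the constraints of the given sub-case. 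Thus $\im \rho$ lies in the $O(2)$-stabilizer of $L_{\rho(\tau_i)}$, making $\rho$ reducible.

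The second family consists of the remaining sub-cases, in which $\rho(\alpha), \rho(\beta), \rho(\gamma) \in V_4$, $\rho(\tau_i) \in V_4$ for $i = 1, \dots, 5$, and $\rho(\tau_6) = \rho(\tau_7) = 1$; here $\im \rho \subseteq V_4$. The only proper subgroups of $V_4$ are the trivial group and the three order-$2$ subgroups $\{1, a\}$, $\{1, b\}$, $\{1, c\}$, each of which fixes an axis in $\R^3$; whereas the full $V_4$ acts irreducibly on $\R^3$, since its three non-identity elements have pairwise perpendicular rotation axes and therefore share no common invariant line. Hence irreducibility in the second family is equivalent to $\im \rho = V_4$. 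Collecting the surviving representations yields the stated characterization. The only real bookkeeping obstacle is the $O(2)$-containment check in each first-family sub-case, which is immediate from the explicit constraints tabulated in the case analysis.
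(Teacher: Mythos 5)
Your criterion for irreducibility is not the one the paper is operating with, and the discrepancy is fatal at both ends of your argument. For a projectively flat $U(2)$-connection, irreducibility means $\Stab(A)=Z(2)$ inside the determinant-one gauge group, which by the lemma in Section 3.2 is equivalent to the $U(2)$-holonomy acting irreducibly on $\C^2$; translated to the induced representation $\rho$ into $SO(3)=PU(2)$, this says that $\im\rho$ fixes no nonzero vector of $\R^3$, i.e.\ that $\im\rho$ is not contained in a circle subgroup $SO(2)$. This is strictly weaker than your condition that $\im\rho$ preserve no line \emph{setwise} (equivalently, lie in no copy of $O(2)$). The gap shows up immediately in your second family: $V_4$ \emph{is} contained in the $O(2)$-stabilizer of each coordinate axis, since every coordinate axis is setwise invariant under all of $a,b,c$; so under the criterion you actually wrote down, the $V_4$-representations would be reducible and the corollary would classify nothing. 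Your remark that the three involutions ``share no common invariant line'' is correct only if ``invariant'' means pointwise fixed --- which is the right notion, but not the one you stated and not the one you use in the first family.

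The same confusion undermines the first family. Showing that $\im\rho=\langle a,\rho(\tau_i)\rangle$ preserves the axis $L_{\rho(\tau_i)}\perp L_a$ setwise does not make $\rho$ reducible in the relevant sense. In, say, Case (2.1) or Case (5.1), the only constraint on the distinguished generator is $a\rho(\tau_i)a=\rho(\tau_i)^{-1}$, so $\rho(\tau_i)$ may be a rotation of order greater than $2$ about an axis perpendicular to $L_a$; then $\langle a,\rho(\tau_i)\rangle$ is a non-abelian dihedral group, it fixes no nonzero vector of $\R^3$, its preimage in $SU(2)$ is binary dihedral and acts irreducibly on $\C^2$, and the corresponding connection has $\Stab(A)=Z(2)$. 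So the step ``thus $\im\rho$ lies in the $O(2)$-stabilizer\dots, making $\rho$ reducible'' fails, and disposing of the $S(2)$-sub-cases requires a genuine argument (for instance, locating further relations that force the $S(2)$-element to be an involution, which would place the image back inside a conjugate of $V_4$) rather than the $O(2)$-containment observation. Since the paper itself says no more than ``throwing away the reducible representations above,'' essentially the whole content of a proof of this corollary lies in exactly the point where your argument goes wrong; as written, the proposal does not establish the statement.
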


Our next task is to identify admissible bundles that correspond to these irreducible representations. Rather than exhausting all possibilities, we work out a few examples on which there is a unique irreducible representation up to conjugation. Let's write $\varpi: T^7 \to X$ for the quotient map, and $P$ an $SO(3)$-bundle on $X$ induced from an admissible bundle $P^{\dagger}$. Then we get an $SO(3)$-bundle $\varpi^*P$ on $T^7$. \cite[Proposition 11.1]{GM93} implies that every non-trivial flat $SO(3)$-bundle on $T^7$ is classified by its second Stiefel--Whitney class $w_2 \in H^2(T^7; \Z/2)$, and each such bundle admits a unique flat connection up to gauge. In our case, we require $\rho(\tau_6) = \rho(\tau_7) = 1$. Thus if $\varpi^*P$ is  non-trivial, then it's determined by the values $\rho(\tau_i) \in V_4$, $i=1, ..., 5$, up to permutations of the non-trivial elements $\{a, b, c\} \subset V_4$. 

To pin-down the bundle $P$ on $X$, we consider the singular set $S_X$ consisting of 12 copies of $T^3$. Explicitly, each of the involutions $\alpha, \beta, \gamma$ contributes to $4$ copies of $T^3$ as the image of its fixed point set in $T^7$. The fiber of the normal bundle of each $T^3$ is a cone over $\R P^3$, thus contributes to a linking circle corresponding to the non-trivial element in $\pi_1(\R P^3)$. We can write down the elements represented by these linking circles explicitly in $\Gamma = \pi_1^{\orb}(X)$ as follows.

\begin{enumerate}[label=(\alph*)]
\item Linking circles for $\alpha$-tori: $\alpha$, $\tau_4\alpha$, $\tau_5 \alpha$, $\tau_7 \alpha$.

\item Linking circles for $\beta$-tori: $\beta$, $\tau_2\beta$, $\tau_3\beta$, $\tau_2\tau_3\beta$.

\item Linking circles for $\gamma$-tori: $\gamma$, $\tau_1 \gamma$, $\tau_3\gamma$, $\tau_1\tau_3\gamma$. 
\end{enumerate}

Let's write $\Xi \subset \Gamma$ for the subset consisting of the 12 elements corresponding to these linking circles. A necessary condition for two representations $\rho_1, \rho_2: \pi_1^{\orb}(X) \to SO(3)$ to produce the same flat bundle $P$ is that $\ker \rho_i \cap \Xi$ must coincide. Because $\rho(g) \neq 1 \in V_4$ for some $g \in \Xi$ means that the restriction of $w_2(P)$ to a linking $\R P^3$ of the $T^3$ corresponding to $g$ is non-trivial. 

Now we write down a few representations whose corresponding bundles each admits one flat connection up to $SO(3)$-gauge transformations. One can certainly write down a lot more appealing to the discussion above. Similar calculations can be worked out for other examples in Joyce's list as long as one considers the quotient of $T^7$ by involutions. 

\begin{table}[h]
\begin{tabular}{ | M{1cm} | M{1cm} | M{1cm} | M{1cm} | M{1cm} | M{1cm} | M{1cm} | M{1cm} | M{1cm} |} 
 \hline
 & $\alpha$ & $\beta$ & $\gamma$ & $\tau_1$ & $\tau_2$ & $\tau_3$ & $\tau_4$ & $\tau_5$ \\ 
 \hline
$\rho_1$ & 1 & 1 & 1 & $c$ & $b$ & 1 & $a$ & 1 \\
 \hline
 $\rho_2$ & $a$ & 1 & 1 & $c$ & $b$ & 1 & $a$ & $a$  \\
 \hline
  $\rho_3$ & $a$ & $b$ & 1 & $c$ & $b$ & $b$ & $a$ & $a$  \\
 \hline
   $\rho_4$ & $a$ & $b$ & $c$ & $c$ & $b$ & $b$ & $a$ & $a$  \\
 \hline
\end{tabular}
\vspace{3mm}
\caption{$V_4$-Representations}
\label{t1}
\end{table}

\begin{proof}[Proof of \autoref{p1.6}]
Let $\rho: \pi_1^{\orb}(X) \to SO(3)$ be one of the representations in \autoref{t1}. With the discussion above, one can verify easily that $\rho$  gives rise to an $SO(3)$-bundle $P$ with its unique irreducible flat connection $A$ up to $SO(3)$-gauge transformations. Let's write $\G'$ for the group of $SO(3)$-gauge transformations. The double cover $\eta: SU(2) \to SO(3)$ gives us an exact sequence (c.f. \cite[Page 66]{AMR95}): 
\[
1 \to \Map_{\orb}(X, \Z/2) \to \G \xrightarrow{\eta_*} \G' \to H^1_{\orb}(X; \Z/2) \to 1. 
\]
Since the stabilizer of $A$ in $\G'$ is $V_4$, and $\eta_*$ maps the center of $\G$ to $1$, we know the number of irreducible connections on $P$, up to determinant-1 gauge transformations, is 
\[
\rk (H^1(X; \Z/2)) / |V_4| = 2^6.
\]
For each $\G$-representative $A' \in \G'\cdot A$, \autoref{l6.3} tells us that the tangent space of the moduli space $\M_{\phi}(X, P^{\dagger})$ at $A'$ is $H^1(X, \ad_{A'})$ which is identified with $H^1(X, \ad_A) = 0$ canonically by any $SO(3)$-gauge transformations sending $A'$ to $A$. Thus the orientations of all the $2^6$ points agree, which tells us that $n_{\phi}(X, P^{\dagger}) = 2^6$. We note that this argument is also consistent with the computation in \autoref{ex6.1} where a $V_4$-representation gives rise to two projectively flat connections of the same sign up to determinant-$1$ gauge transformations.
\end{proof}

\end{exm}


\bibliographystyle{alpha}
\bibliography{RefG2}
\end{document}